\definecolor{winered}{rgb}{0.7,0,0}
\definecolor{lessblue}{rgb}{0,0,0.7}
\newcommand{\myitem}[3]{\item[#2]\def\@currentlabel{#3}\label{#1}}
\def\@tocline#1#2#3#4#5#6#7{
\begingroup
  \par
    \parindent\z@ \leftskip#3 \relax \advance\leftskip\@tempdima\relax
                  \rightskip\@pnumwidth plus 4em \parfillskip-\@pnumwidth
    \ifcase #1 
       \vskip 0.6em \hskip 0em 
       \or
       \or \hskip 0em 
       \or \hskip 1em 
    \fi%
    %
    #6
    %
    \nobreak\relax{\leavevmode\leaders\hbox{\,.}\hfill}
    \hbox to\@pnumwidth {\@tocpagenum{#7}}
  \par
\endgroup
}
 \def\l@section{\@tocline{0}{0pt}{0pc}{}{}}
\renewcommand{\tocsection}[3]{%
  \indentlabel{\@ifnotempty{#2}{ 
    \ignorespaces\bfseries{#2. #3}}}
  \indentlabel{\@ifempty{#2}{\ignorespaces\bfseries{#3}}{}} 
    \vspace{1.5pt}}
\renewcommand{\tocsubsection}[3]{%
  \indentlabel{\@ifnotempty{#2}{
    \ignorespaces#2. #3}}
  \indentlabel{\@ifempty{#2}{\ignorespaces #3}{}}
    \vspace{1.5pt}}
\renewcommand{\tocsubsubsection}[3]{%
  \indentlabel{\@ifnotempty{#2}{
    \ignorespaces#2. #3}}
  \indentlabel{\@ifempty{#2}{\ignorespaces #3}{}}
    \vspace{1.5pt}}
\def\@nomenstarted{0}
\newlength{\@nomenoldtabcolsep}
\newcommand{\nomenstart}
  {%
    \def\@nomenstarted{1}%
    \setlength{\@nomenoldtabcolsep}{\tabcolsep}%
    \setlength{\tabcolsep}{3.5pt}%
    \begin{longtable}{p{0.11\textwidth} p{0.86\textwidth}}
  }
\newcommand{\nomenitem}[2]{%
    \ifcase\@nomenstarted%
      \or 
      \or \\ 
    \fi%
    #1\,{\leavevmode\leaders\hbox{\,.}\hfill} & #2%
    \def\@nomenstarted{2}%
  }%
\newcommand{\nomenend}
  {\\%
      \end{longtable}%
      \setlength{\tabcolsep}{\@nomenoldtabcolsep}%
      \def\@nomenstarted{0}%
  }
\newcommand{\vast}{\bBigg@{4}}
\newcommand{\Vast}{\bBigg@{5}}
\numberwithin{equation}{section}
\numberwithin{figure}{section}
\newtheorem{thm}{Theorem}[section]
\newtheorem{prop}[thm]{Proposition}
\newtheorem{lemma}[thm]{Lemma}
\newtheorem{cor}[thm]{Corollary}
\newtheorem*{thm*}{Theorem}
\newtheorem*{prop*}{Proposition}
\newtheorem*{cor*}{Corollary}
\newtheorem*{conj*}{Conjecture}
\theoremstyle{definition}
\newtheorem{definition}[thm]{Definition}
\theoremstyle{remark}
\newtheorem{rmk}[thm]{Remark}
\newcommand{\mc}{\mathcal}
\newcommand{\cA}{\mc A}
\newcommand{\cC}{\mc C}
\newcommand{\cH}{\mc H}
\newcommand{\cL}{\mc L}
\newcommand{\cN}{\mc N}
\newcommand{\cO}{\mc O}
\newcommand{\cS}{\mc S}
\newcommand{\cV}{\mc V}
\newcommand{\ms}{\mathscr}
\newcommand{\sC}{\ms C}
\newcommand{\sD}{\ms D}
\newcommand{\scri}{\ms I}
\newcommand{\sR}{\ms R}
\newcommand{\C}{\mathbb{C}}
\newcommand{\N}{\mathbb{N}}
\newcommand{\R}{\mathbb{R}}
\newcommand{\Z}{\mathbb{Z}}
\newcommand{\Sph}{\mathbb{S}}
\newcommand{\sfG}{\mathsf{G}}
\newcommand{\bfa}{\mathbf{a}}
\newcommand{\fa}{\mathfrak{a}}
\newcommand{\fm}{\mathfrak{m}}
\newcommand{\so}{\mathfrak{so}}
\newcommand{\Err}{{\mathrm{Err}}{}}
\newcommand{\vol}{\operatorname{vol}}
\newcommand{\End}{\operatorname{End}}
\newcommand{\Hom}{\operatorname{Hom}}
\newcommand{\mathspan}{\operatorname{span}}
\newcommand{\supp}{\operatorname{supp}}
\newcommand{\tr}{\operatorname{tr}}
\newcommand{\dS}{{\mathrm{dS}}}
\newcommand{\Ups}{\Upsilon}
\newcommand{\eps}{\epsilon}
\newcommand{\hra}{\hookrightarrow}
\newcommand{\la}{\langle}
\newcommand{\ol}{\overline}
\newcommand{\pa}{\partial}
\newcommand{\ra}{\rangle}
\newcommand{\bop}{{\mathrm{b}}}
\newcommand{\Diff}{\mathrm{Diff}}
\newcommand{\Diffb}{\Diff_\bop}
\newcommand{\half}{{\tfrac{1}{2}}}
\newcommand{\loc}{{\mathrm{loc}}}
\newcommand{\CI}{\cC^\infty}
\newcommand{\CIdot}{\dot\cC^\infty}
\newcommand{\Hext}{\bar H}
\newcommand{\Ric}{\mathrm{Ric}}
\newcommand{\bhm}{\fm}
\newcommand{\bha}{\bfa}
\newcommand{\openbigpmatrix}[1]
  {%
    \def\@bigpmatrixsize{#1}%
    \addtolength{\arraycolsep}{-#1}%
    \begin{pmatrix}%
  }
\newcommand{\closebigpmatrix}
  {%
    \end{pmatrix}%
    \addtolength{\arraycolsep}{\@bigpmatrixsize}%
  }
\newlength{\enummargin}\setlength{\enummargin}{1.5em}
\newcommand{\usref}[1]{{\upshape\ref{#1}}}
\newcommand*{\fwbw}[1]{\expandafter\@fwbw\csname c@#1\endcsname}
\newcommand*{\@fwbw}[1]{\ifcase #1 \or {\rm fw}\or {\rm bw}\fi}
\AddEnumerateCounter{\fwbw}{\@fwbw}
\begin{document}

\title{Black hole gluing in de Sitter space}

\date{\today}

\subjclass[2010]{Primary 83C05, 83C57, Secondary 35B40, 35C20, 35L05}
\keywords{Many-black-hole spacetime, asymptotically de~Sitter space, gluing}

\author{Peter Hintz}
\address{Department of Mathematics, Massachusetts Institute of Technology, Cambridge, Massachusetts 02139-4307, USA}
\email{phintz@mit.edu}

\begin{abstract}
  We construct dynamical many-black-hole spacetimes with well-controlled asymptotic behavior as solutions of the Einstein vacuum equation with positive cosmological constant. We accomplish this by gluing Schwarzschild--de~Sitter or Kerr--de~Sitter black hole metrics into neighborhoods of points on the future conformal boundary of de~Sitter space, under certain balance conditions on the black hole parameters. We give a self-contained treatment of solving the Einstein equation directly for the metric, given the scattering data we encounter at the future conformal boundary. The main step in the construction is the solution of a linear divergence equation for trace-free symmetric 2-tensors; this is closely related to Friedrich's analysis of scattering problems for the Einstein equation on asymptotically simple spacetimes.
\end{abstract}

\maketitle

\section{Introduction}
\label{SI}

A vacuum spacetime with cosmological constant $\Lambda\in\R$ is a 4-manifold $M$ equipped with a Lorentzian metric $g$ of signature $({-}{+}{+}{+})$ satisfying the Einstein vacuum equation
\begin{equation}
\label{EqIEin}
  \Ric(g) - \Lambda g = 0.
\end{equation}
The Majumdar--Papapetrou \cite{MajumdarSolution,PapapetrouSolution} spacetime is an explicit solution for the coupled Einstein--Maxwell system\footnote{This means that the right hand side of~\eqref{EqIEin} is no longer $0$, but related to the energy-momentum tensor of an electromagnetic field satisfying Maxwell's equation.} in $\Lambda=0$ describing several \emph{extremally charged} black holes; a similar construction for $\Lambda>0$ was given by Kastor and Traschen \cite{KastorTraschenManyBH}. We will demonstrate how to construct \emph{vacuum} spacetimes which, for late times, describe dynamical many-black-hole spacetimes \emph{with precisely controlled asymptotic structure} using a gluing method. Our construction applies in the case $\Lambda>0$, which is consistent with the $\Lambda$CDM model currently favored in cosmology \cite{RiessEtAlLambda,PerlmutterEtAlLambda}.

The simplest solution of~\eqref{EqIEin} is de~Sitter space
\[
  M^\circ = (-\pi/2,\pi/2)_s \times \Sph^3,\quad
  g_\dS = (3/\Lambda)\cos^{-2}(s)\bigl(-d s^2+g_{\Sph^3}),
\]
where $g_{\Sph^3}$ is the standard metric on the 3-sphere; this describes an exponentially expanding (as $s\to\pi/2$) universe. The metric $g_\dS$ is asymptotically simple \cite{PenroseAsymptotics}: the conformal multiple $\cos^2(s)g_\dS$ extends smoothly to a Lorentzian metric on the partial compactification
\[
  M = (-\pi/2,\pi/2]_s \times \Sph^3.
\]
$(M^\circ,g_\dS)$ is geodesically complete, so future timelike observers in $M^\circ$ can only tend to $\pa M$ but never reach it; one calls $\pa M$ \emph{future timelike infinity}, or the \emph{future conformal boundary} of de~Sitter space, often also denoted $I^+$. Since images of null-geodesics are conformally invariant, the backward light cone from a point $p\in\pa M$ is a null hypersurface inside $M^\circ$ and known as the \emph{cosmological horizon} associated with $p$. See Figure~\ref{FigIdS}.

\begin{figure}[!ht]
\centering
\includegraphics{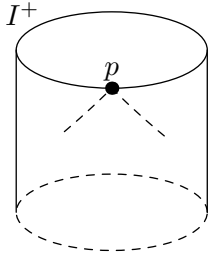}
\caption{The (partial) conformal compactification $M$ of de~Sitter space, a point $p$ on its future conformal boundary $I^+$, and a piece of the backwards light cone from $p$.}
\label{FigIdS}
\end{figure}

The simplest black hole solution of~\eqref{EqIEin} is the Schwarzschild--de~Sitter (SdS) solution, recalled below. It depends on a mass parameter $\bhm\in\R$ and can be thought of as describing a black hole tending to some fixed but arbitrary point $p$ in $I^+$; it is defined in a neighborhood of $p$. Our main result gives a sufficient condition under which one can glue several SdS black holes into de~Sitter space:

\begin{thm}
\label{ThmIBaby}
  Let $N\in\N$. For $i=1,\ldots,N$, fix points $p_i\in\pa M=\Sph^3\subset\R^4$ and (subextremal) masses $0<\bhm_i<(3\Lambda)^{-1/2}$ such that the balance condition
  \begin{equation}
  \label{EqIBabyBalance}
    \sum_{i=1}^N \bhm_i p_i=0 \in \R^4.
  \end{equation}
  holds. Then there exists a metric $g$ solving the Einstein vacuum equation~\eqref{EqIEin} in a neighborhood of $\pa M$ with the following properties:
  \begin{enumerate}
  \item\label{ItIBabySdS} in a neighborhood of $p_i$, $g$ is isometric to a Schwarzschild--de~Sitter black hole metric with mass $\bhm_i$, containing future affine complete event and cosmological horizons;
  \item\label{ItIBabydS} outside a small neighborhood of $\{p_1,\ldots,p_N\}$, $\cos^2(s) g$ is smooth down to $s=\pi/2$, and asymptotic to the rescaled de~Sitter metric $\cos^2(s)g_\dS$ at the rate $\cos^3(s)$.
  \end{enumerate}
\end{thm}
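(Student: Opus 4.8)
The plan is to carry out the construction at the future conformal boundary $\pa M=\Sph^3$, producing there \emph{scattering data} for~\eqref{EqIEin} — the round conformal class $[g_{\Sph^3}]$ together with a trace-free symmetric $2$-tensor $h$ on $\Sph^3$ — and then solving~\eqref{EqIEin} for the metric in a collar neighborhood of $\pa M$. The starting point is the behavior of a single Schwarzschild--de~Sitter metric $g_{\mathrm{SdS},\bhm}$ centered at a point $p\in\Sph^3$ near $I^+$: in a boundary-adapted (Fefferman--Graham type) gauge one has $\cos^2(s)\,g_{\mathrm{SdS},\bhm}=\cos^2(s)\,g_\dS+\bhm\cos^3(s)\,\lambda_p+O(\cos^4 s)$ with a smooth conformal extension across $s=\pi/2$, where $\lambda_p$ is an explicit trace-free, $g_{\Sph^3}$-divergence-free symmetric $2$-tensor on $\Sph^3\setminus\{p\}$, invariant under the rotations of $\Sph^3$ fixing $p$ and with a power-type singularity at $p$ carrying no dipole contribution (the SdS ``mass aspect'' concentrating at $p$ in these coordinates). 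Since the boundary dimension $3$ is odd, there is no logarithmic term and no obstruction in the expansion, and the coefficient at order $\cos^3(s)$ is free data.

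The central step — and the only place the balance condition enters — is to realize a \emph{global} trace-free, $g_{\Sph^3}$-divergence-free tensor $h$ on $\Sph^3$ that is smooth away from $\{p_1,\dots,p_N\}$ and equals $\bhm_i\lambda_{p_i}$ near each $p_i$. Choosing cutoffs $\chi_i$ equal to $1$ near $p_i$ and supported in disjoint small balls, the tensor $\sum_i\bhm_i\chi_i\lambda_{p_i}$ is trace-free with the right singularities but has divergence $\rho':=\sum_i\bhm_i(d\chi_i)\,{\cdot}\,\lambda_{p_i}$, a smooth $1$-form supported in the transition annuli; one must then solve $\dv_{\Sph^3}h'=-\rho'$ for a trace-free symmetric $2$-tensor $h'$ vanishing near each $p_i$, and set $h:=\sum_i\bhm_i\chi_i\lambda_{p_i}+h'$. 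Because $\dv_{\Sph^3}$ on trace-free symmetric $2$-tensors is underdetermined elliptic with formal adjoint the conformal Killing operator $\dv_{\Sph^3}^{*}$, whose kernel is the $10$-dimensional space of conformal Killing fields of $(\Sph^3,g_{\Sph^3})$, this equation is solvable (with $h'$ smooth and vanishing near the $p_i$) precisely when $\rho'$ is $L^2$-orthogonal to every conformal Killing field $X$. Integrating by parts and using that each $\lambda_{p_i}$ is divergence-free on $\Sph^3\setminus\{p_i\}$, this orthogonality reduces to the vanishing of $\sum_i\bhm_i\,\Phi_{p_i}(X)$, where $\Phi_{p_i}(X)=\lim_{\delta\to0}\int_{\pa B_\delta(p_i)}\lambda_{p_i}(\nu,X)$ is a flux through small spheres about $p_i$; using the singular structure of $\lambda_{p_i}$ one finds $\Phi_{p_i}(X)$ is a fixed nonzero multiple of $(\dv_{\Sph^3}X)(p_i)$. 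This vanishes for Killing fields $X\in\so(4)$, while for the gradient conformal Killing fields $X=\nabla^{\Sph^3}\!\langle\,\cdot\,,v\rangle$, $v\in\R^4$ (first spherical harmonics, with $(\dv_{\Sph^3}X)(p_i)$ a nonzero multiple of $\langle p_i,v\rangle$), the condition becomes $\sum_i\bhm_i\langle p_i,v\rangle=0$ for all $v\in\R^4$ — exactly the balance condition~\eqref{EqIBabyBalance}.

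With the scattering data $(g_{\Sph^3},h)$ in hand, I would solve~\eqref{EqIEin} in a collar $(\pi/2-\eps,\pi/2]_s\times\Sph^3$: the formal Fefferman--Graham expansion determines all further coefficients from $(g_{\Sph^3},h)$ without obstruction, and a genuine solution is produced — as promised in the introduction, directly for the metric — by writing $\Ric(g)-\Lambda g=0$ in a generalized wave gauge as a quasilinear Fuchsian hyperbolic system with a smooth conformal extension whose indicial structure matches the Fefferman--Graham expansion, and solving it by energy estimates, in the spirit of Friedrich's analysis of the conformal Einstein equations. The resulting metric $g$ then has: $\cos^2(s)\,g$ smooth down to $s=\pi/2$ with $g-g_\dS=O(\cos^3 s)$ wherever $h$ is smooth, i.e.\ outside a small neighborhood of $\{p_1,\dots,p_N\}$, which is property~(\ref{ItIBabydS}); and near each $p_i$, where $h$ equals the SdS data $\bhm_i\lambda_{p_i}$ exactly, uniqueness for~\eqref{EqIEin} with prescribed data at $I^+$ forces $g$ to coincide with $g_{\mathrm{SdS},\bhm_i}$ on the past domain of dependence of a boundary neighborhood of $p_i$ — a neighborhood of $p_i$ in $M$ — which is property~(\ref{ItIBabySdS}); the future affine completeness of the event and cosmological horizons is then inherited from Schwarzschild--de~Sitter, since their future-complete portions accumulate at $p_i$ and hence lie in this neighborhood.

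The hard part will be the middle step together with the existence theory. One must recognize the divergence operator on trace-free symmetric $2$-tensors as the linear obstruction governing the gluing, identify its cokernel with the conformal Killing fields of $\Sph^3$, and compute the fluxes $\Phi_{p_i}$ precisely enough that, after the six isometry directions drop out automatically, the single surviving scalar condition is exactly $\sum_i\bhm_i p_i=0$; this is the analogue in the de~Sitter setting of the compatibility conditions arising in scattering problems for the Einstein equation. One must then produce an \emph{honest}, not merely formal, Einstein metric realizing this scattering data with the stated smoothness of the conformal extension and the sharp $\cos^3(s)$ decay — which requires a careful Fuchsian/hyperbolic formulation of~\eqref{EqIEin} near the spacelike conformal boundary rather than a soft argument, and care in matching it to the exact Schwarzschild--de~Sitter metrics near the $p_i$.
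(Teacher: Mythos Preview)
Your plan is correct and runs closely parallel to the paper's, with the same decisive step: the divergence equation for a trace-free symmetric 2-tensor on $\Sph^3$, obstructed by the conformal Killing fields, which yields the balance condition. The packaging differs in two respects. First, the paper glues the actual SdS \emph{metrics} (not just their $\tau^3$ boundary data) into de~Sitter via cutoffs, computes the resulting $\cO(\tau^4)$ error in the Einstein operator, and corrects it by a $\tau^3$ metric perturbation supported away from the $p_i$; since every subsequent correction (the formal Taylor series in the DeTurck gauge and the nonlinear $\tau^\infty$ remainder) is likewise supported away from the $p_i$, the final metric equals $g_{p_i,\bhm_i}$ near $p_i$ \emph{by construction}, so no uniqueness argument for the scattering problem is needed. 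Your route---glue the data, reconstruct the metric, then invoke uniqueness at $I^+$---also works (uniqueness up to diffeomorphism suffices since the theorem only claims ``isometric to''), but is slightly less direct and requires care about what ``same scattering data'' means in a neighborhood of the singular point. Second, your flux computation $\Phi_{p_i}(X)\propto(\operatorname{div}X)(p_i)$ is an elegant alternative to the paper's direct evaluation of the bulk integral against each conformal Killing field (symmetry kills the $\so(4)$ and transverse $S_q$ contributions, and one explicit integral handles $S_{p_i}$); the two are equivalent by integration by parts. The paper in fact remarks (Remark~\ref{RmkSOWeyl}) that Friedrich's conformal field equation approach---essentially yours---would prove the theorem, but opts for a self-contained treatment directly for the metric in a DeTurck gauge: indicial-operator/Borel arguments for the formal series, a quasilinear backward wave equation for the $\tau^\infty$ remainder, and the second Bianchi identity plus unique continuation to recover the ungauged Einstein equation.
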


See Figure~\ref{FigIBaby}. When $N\geq 2$, and all masses are sufficiently small in absolute value, we show that the cosmological horizons of different black holes intersect in the maximal globally hyperbolic development of $g$; see the end of~\S\ref{SsSC}. Note that upon replacing $s$ by $-s$, we glue SdS black holes, with \emph{past} affine complete horizons, into a neighborhood of past conformal infinity of de~Sitter space; this provides interesting settings in which to (numerically) study the interaction of black holes in de~Sitter space under forward evolution.

\begin{figure}[!ht]
\centering
\includegraphics{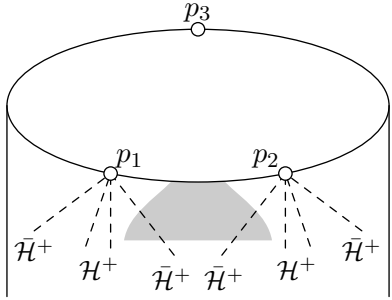}
\caption{Illustration of Theorem~\ref{ThmIBaby}. We glue SdS black holes into neighborhoods of the points $p_i$; only two black holes are shown here. The dashed lines labelled $\bar\cH^+$ are the cosmological horizons of the individual black holes, while the dashed lines labelled $\cH^+$ (not drawn Penrose-diagrammatically) indicate their event horizons. (The two $\bar\cH^+$ lines tending to $p_1$ are really a single $(0,\infty)_{t_*}\times\Sph^2$, forming one connected horizon, but for visualization purposes we needed to reduce dimension of the sphere by $2$.) The gray region indicates the region where the metric is not isometric to some SdS metric.}
\label{FigIBaby}
\end{figure}

Recall here that for subextremal mass parameters $\bhm\in(0,(3\Lambda)^{-1/2})$, the SdS metric is
\[
  \R_t \times (r_-,r_+)_r \times \Sph^2, \quad
  g_\bhm = -\mu_\bhm(r)d t^2 + \mu_\bhm(r)^{-1}\,d r^2 + r^2 g_{\Sph^2},
\]
where $\mu_\bhm(r)=1-\frac{2\bhm}{r}-\frac{\Lambda r^2}{3}$, and $0<r_-<r_+$ are the unique positive real roots of $\mu_\bhm$. After a suitable coordinate change, one can extend $g_\bhm$ beyond the event horizon $r=r_-$ and beyond the cosmological horizon $r=r_+$ to a metric $g_\bhm$ on a larger manifold
\[
  M_\bhm^\circ = \R_{t_*} \times (0,\infty)_r \times \Sph^2.
\]
One can identify the piece $t_*>0$ of $M_\bhm^\circ$ with a subset of de~Sitter space $M^\circ$ in such a way that the SdS cosmological horizon and the backward light cone from a point $p\in\pa M$ coincide in a neighborhood of $p$; denote the resulting metric by $g_{p,\bhm}$. This metric is in fact conformally smooth down to $\pa M$ away from the singular point $p$, with $r\to\infty$ corresponding to $s\to\pi/2$. See Figure~\ref{FigISdS}. Conclusion~\eqref{ItIBabySdS} in Theorem~\ref{ThmIBaby} is then the statement that $g=g_{p_i,\bhm_i}$ near $p_i$.

\begin{figure}[!ht]
\centering
\includegraphics{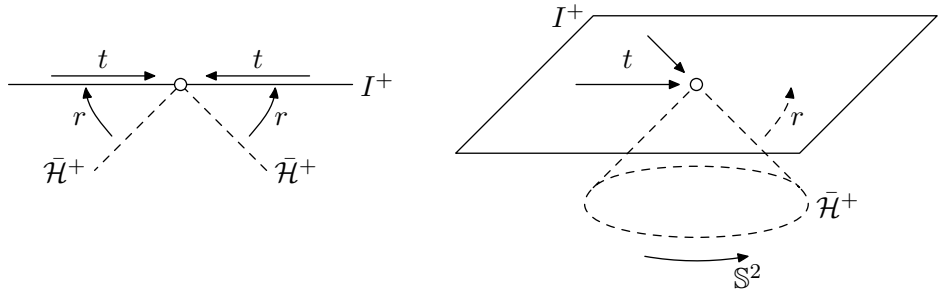}
\caption{The Schwarzschild--de~Sitter metric glued into de~Sitter space. We only show the cosmological horizon and the cosmological region $r>r_+$ where $r$ is timelike. On the right is the same picture, but we show an additional spatial dimension, thus showcasing the connectedness of $\bar\cH^+$.}
\label{FigISdS}
\end{figure}

The precise result, Theorem~\ref{ThmSGlue}, is more general: the masses $\bhm_i$ are allowed to be any real number, and we then glue the far end $r\gg 1$ of $M_\bhm^\circ$ into de~Sitter space. (For subextremal masses as in Theorem~\ref{ThmIBaby}, one can then `fill in' the rest of the SdS black hole.) We also prove the necessity of the balance condition under certain asymptotic assumptions on $g$; see Theorem~\ref{ThmSUniq}.

We prove a similar result for gluing Kerr--de~Sitter (KdS) black holes into de~Sitter space. The KdS family of metrics \cite{CarterHamiltonJacobiEinstein} depends on two parameters, $\bhm$ (mass) and $\bha$ (specific angular momentum). For the purpose of gluing KdS metrics into de~Sitter space, we also keep track of the point on $I^+$ to which the black hole tends, and the orientation of its axis of rotation. We can then glue any finite number of KdS black holes into de~Sitter space under two balance conditions: the first condition is similar to~\eqref{EqIBabyBalance} but now involving the effective mass $\bhm_{\rm eff}=\bhm/(1+\Lambda\bha^2/3)^2$, and the second condition requires the effective angular momenta $\bhm_{\rm eff}\bha$ (taking into account the black hole locations and axes of rotation) to sum up to $0$, see Definition~\ref{DefKBalance}.

\subsection{Gluing in general relativity}

Most gluing constructions in general relativity operate on the level of (noncharacteristic) initial data sets. Recall that an initial data set for the Einstein vacuum equation~\eqref{EqIEin} is a 3-manifold $\Sigma$ together with a Riemannian metric $\gamma$ and a symmetric 2-tensor $k$ on $\Sigma$ satisfying the constraint equations
\begin{equation}
\label{EqIConstr}
  R_\gamma + (\tr_\gamma k)^2 - |k|_\gamma^2 = 2\Lambda,\qquad
  \delta_\gamma k+d\tr_\gamma k = 0;
\end{equation}
here $R_\gamma$ is the scalar curvature, and $\delta_\gamma$ is the negative divergence (the adjoint of the symmetric gradient). Given $(\Sigma,\gamma,k)$, there exists a unique maximal globally hyperbolic development $(M,g)$, with $g$ solving~\eqref{EqIEin}, and an embedding $\Sigma\hra M$ such that the images of $\gamma$ and $k$ are the metric and second fundamental form of $\Sigma$ \cite{ChoquetBruhatLocalEinstein,ChoquetBruhatGerochMGHD}.

Brill--Lindquist \cite{BrillLindquist} explicitly constructed initial data containing any number $N$ of (charged) Einstein--Rosen bridges at arbitrarily chosen points in $\R^3$ and with arbitrary mass parameters; the resulting initial data has one distinguished asymptotically flat (AF) region and $N$ AF regions on the other side of the wormholes. Misner \cite{MisnerGeometrostatics} (and Lindquist \cite{LindquistIVP} in the Einstein--Maxwell case) showed how with a careful choice of parameters, one can identify all but two AF regions, and for just two points even create a spacetime with one AF end and a wormhole connecting two `points'. These constructions are global and rigid, the main tool being superpositions of shifted and scaled versions of $1/|x|$; this is also the case for Majumdar--Papapetrou and Kastor--Traschen spacetimes.

The starting point for localized gluing is Corvino's work \cite{CorvinoScalar} on the gluing of the large end of Schwarzschild data ($\Lambda=0$) to a given time-symmetric AF initial data set on $\R^3$; in this case, the constraint equations become $k=0$ and $R_\gamma=0$, and key to the localized gluing is the underdetermined nature of the scalar curvature operator $\gamma\mapsto R_\gamma$ (more precisely, the overdetermined nature of the adjoint of its linearization). The assumption of time-symmetry was removed by Corvino--Schoen \cite{CorvinoSchoenAsymptotics} by allowing the AF end to be equal to Kerr initial data.

Chru\'sciel--Delay \cite{ChruscielDelaySimple} extended the methods of Corvino--Schoen and also refined wormhole constructions by Isenberg--Mazzeo--Pollack \cite{IsenbergMazzeoPollackWormholes,IsenbergMazzeoPollackTopology}. In \cite[\S4]{ChruscielDelaySimple}, they constructed time-symmetric data containing any number of Schwarzschild black holes (meaning: neighborhoods of the neck region of the Riemannian Schwarzschild metric), placed at a collection of points in $\R^3$ which is symmetric around $0\in\R^3$. (This assumption is loosely related to the balance condition~\eqref{EqIBabyBalance}.) The same authors also construct many-Kerr initial data \cite[\S8.9]{ChruscielDelayMapping}, again under a parity condition. In both papers, the smallness required for solving the nonlinear constraint equations comes from taking the black hole masses to be small compared to the distance of the points. Chru\'sciel--Mazzeo \cite{ChruscielMazzeoManyBH} show that the maximally globally hyperbolic development of suitable many-Schwarzschild initial data has past-complete $\scri^+$, and the black hole region has several connected components. Their arguments use Friedrich's stability result \cite{FriedrichStability} and direct geometric arguments, a description of the \emph{global} structure of the resulting spacetime being far beyond the reach of hyperbolic PDE theory at this point.

Chru\'sciel--Isenberg--Pollack \cite{ChruscielIsenbergPollackEngineering,ChruscielIsenbergPollackPRL} give sharp results on gluing in compact subsets of initial data sets, and also discuss the case $\Lambda>0$ as well as matter models coupled to the Einstein equation; see also \cite{IsenbergMaxwellPollackGluing}. Carlotto--Schoen \cite{CarlottoSchoenData} gave another striking extension of this method, producing asymptotically flat initial data which are nontrivial ($\gamma$ Euclidean, $k=0$) only in arbitrary (noncompact!) \emph{cones} in $\R^3$.

We also mention Cortier's work \cite{CortierKdSGluing} on gluing exact Kerr--de~Sitter ends to solutions with asymptotically KdS ends, generalizing from the Kottler--Schwarzschild--de~Sitter case studied in \cite{ChruscielPollackKottler}. The latter results are very different from Theorem~\ref{ThmIBaby} as they concern the periodic ends of the level set $t=0$ in the maximal analytic extension of SdS and KdS spacetimes (called Delaunay ends in the SdS case). One can construct many-black-hole initial data sets, with a \emph{finite} number of black holes, from \cite{ChruscielPollackKottler,CortierKdSGluing} by identifying two isometric (in particular, sufficiently far apart) copies of the fundamental domain of the maximally extended SdS or KdS data set glued in near spatial infinity. In the case of exact SdS or KdS data sets, the resulting spacetime is a quotient of the maximal analytic extension by a suitable discrete translational symmetry; in particular, the future conformal boundary has several connected components, each of which is an interval times $\Sph^2$.

\subsection{Scattering problems on asymptotically simple spacetimes; gluing in de~Sitter space}
\label{SsIdS}

As discovered by Friedrich~\cite{FriedrichDeSitterPastSimple}, the `constraint equations' at the conformal boundary of an asymptotically simple spacetime\footnote{A manifold $M$ with boundary, and a metric $g$ on $M^\circ$ satisfying~\eqref{EqIEin} such that, for a boundary defining function $\tau$, the `unphysical metric' $\tau^2 g$ is a smooth Lorentzian metric on $M$, with $\pa M$ spacelike when $\Lambda>0$.} with $\Lambda>0$ simplify dramatically compared to~\eqref{EqIConstr}. Indeed, fixing a Riemannian 3-manifold $(S,h)$, the degrees of freedom are two scalar functions as well as a symmetric 2-tensor $k$ on $S$ satisfying the \emph{linear} equations
\begin{equation}
\label{EqIConstr2}
  \tr_h k=0,\quad \delta_h k=0.
\end{equation}
Given these data, one can construct an asymptotically simple solution $(M,g)$ of~\eqref{EqIEin} so that $S=\pa M$ and $h$ is the restriction to $\pa M$ of a suitable conformal multiple of $g$; the tensor $k$ is equal to certain components of the rescaled Weyl tensor of $g$ at $\pa M$. (We remark here that Dafermos--Holzegel--Rodnianski \cite{DafermosHolzegelRodnianskiKerrBw} gave a scattering construction of black holes settling down exponentially fast to a subextremal Kerr metric by solving a characteristic Cauchy problem `backwards' with cosmological constant $\Lambda=0$; see also \cite{RendallCharacteristic,LukCharacteristic}.)

The linear nature of the constraints~\eqref{EqIConstr2} suggest a simple way of gluing pieces of asymptotically simple spacetimes into de~Sitter space. Indeed, on Riemannian manifolds $(S,h)$ of dimension 3 and higher, the divergence operator on trace-free symmetric 2-tensors is underdetermined, and one can solve the divergence equation
\begin{equation}
\label{EqIDiv}
  \delta_h k'=f,\qquad \tr_h k'=0,
\end{equation}
in such a way that the support of $k'$ is contained in a small neighborhood of $\supp f$. This requires that there is no obstruction, i.e.\ $f$ must be orthogonal to the cokernel---the space of conformal Killing vector fields. Solvability then follows from a general result by Delay \cite{DelayCompact}. Thus, naively gluing many SdS black holes into neighborhoods of points $p_1,\ldots,p_N\in I^+=\Sph^3$ via a partition of unity, the constraints~\eqref{EqIConstr2} will typically be violated for the induced data $k$, and with $h=g_{\Sph^3}$; one can, however, correct $k$ by a 2-tensor $k'$ supported away from the points $p_i$ assuming the obstruction vanishes for $f=-\delta_h k$, which precisely leads to the balance condition~\eqref{EqIBabyBalance}; see~\S\ref{SsSO}.

We use the gluing problem as an opportunity to give a self-contained treatment of the scattering problem---the construction of a spacetime solution of~\eqref{EqIEin} from asymptotic data at $I^+$---in this specific context. Rather than using Friedrich's conformal Einstein field equations, see \cite[\S2]{FriedrichDeSitterPastSimple}, in which one solves for quantities derived from the metric tensor, we directly construct the metric as a Lorentzian \emph{0-metric} (uniformly degenerate metric), following the terminology of Mazzeo--Melrose \cite{MazzeoMelroseHyp}; see~\S\ref{SsSG}--\ref{SsSC}. As demonstrated by Vasy \cite[Theorem~5.5]{VasyWaveOndS}, solutions of linear wave equations on a spacetime with asymptotically de~Sitter type 0-metrics can be constructed from scattering data in Taylor series at $I^+$ using regular-singular point ODE methods; the remaining error, which vanishes to all orders at $I^+$, is solved away by solving a wave equation with such essentially trivial forcing. (Similar constructions are fairly standard in the Riemannian context on conformally compact or asymptotically hyperbolic metrics, see e.g.\ Fefferman--Graham \cite{FeffermanGrahamAmbient,FeffermanGrahamAmbientBook} and Graham--Zworski \cite{GrahamZworskiScattering}.)

In our gluing problem, this approach does not work directly. Indeed, calling the naively glued metric from the previous paragraph $g_0$, the leading order term of the resulting error $\Ric(g_0)-\Lambda g_0$ is of size $\cO(\tau^4)$ as a 2-tensor expressed in terms of $d\tau/\tau$ and sections of $T^*\Sph^3/\tau$ (and supported away from the points $p_i$) where $\tau=\cos s$ is a boundary defining function of $M$; the degenerate nature of (the linearization of) the Einstein vacuum equation prevents us from solving this error away using a metric correction of the same size. Instead, we need to use a metric correction of size $\cO(\tau^3)$ which does not produce any $\tau^3$ error terms (i.e.\ lies in the kernel of the indicial operator of the linearization of~\eqref{EqIEin}); in order for it to solve away the $\tau^4$ error, one needs to solve an equation of the form~\eqref{EqIDiv}.

To continue the construction, we use the now fairly precise glued metric, called $g^0$, as a background metric in a generalized harmonic gauge, similarly to~\cite{GrahamLeeConformalEinstein}, and solve the gauge-fixed Einstein equation (see Definition~\ref{DefSG}), first in Taylor series in~\S\ref{SsSG} (similarly now to the scalar wave equation case discussed in \cite{VasyWaveOndS}), and then nonlinearly by solving a quasilinear wave equation with rapidly decaying (at $\pa M$) forcing in~\S\ref{SsSC}. We show that the resulting metric solves the Einstein vacuum equation by using the usual argument based on the second Bianchi identity and the propagation of the gauge condition. In this final step, the sufficiently rapid vanishing of the gauge condition, a 1-form on spacetime, at $\pa M$ replaces the vanishing of the Cauchy data of this 1-form in the standard short-time theory.

Our gluing method is very flexible. For instance, one can glue any number of KdS black holes into the upper half space model $[0,1)_{\tilde\tau}\times\R^3_{\tilde x}$, $g_\dS=\tilde\tau^{-2}(-d\tilde\tau^2+d\tilde x^2)$, of de~Sitter space without any balance conditions if one allows for the solution to be sufficiently large at spatial infinity; in this case, we can of course only guarantee the existence of the nonlinear solution of~\eqref{EqIEin} in a neighborhood of $\tilde\tau=0$ which may shrink as $|\tilde x|\to\infty$. More generally, one can glue any metrics suitably asymptotic to de~Sitter space into de~Sitter space. See~\S\ref{SsSNc} for more on this. In particular, one may be able to glue several \emph{dynamical} KdS black holes together once their behavior is understood globally; see the work \cite{HintzVasyKdSStability} by the author with Vasy for the stability of the KdS exterior, and Schlue's ongoing project \cite{SchlueWeylDecay,SchlueOpticaldS} (building on his prior \cite{SchlueCosmological}) on the stability of the cosmological region.

We remark that, as another application of our approach, the polyhomogeneous formal solutions of~\eqref{EqIEin} constructed by Fefferman--Graham can be corrected to true (asymptotically de~Sitter like) solutions near the future conformal boundary; see Remark~\ref{RmkSCFormal}. This was previously shown by Rodnianski--Shlapentokh-Rothman \cite{RodnianskiShlapentokhRothmanSelfSimilar}.

\begin{rmk}
  We expect our methods to generalize in a straightforward manner to all higher dimensions, including to odd-dimensional spacetimes to which neither Friedrich's analysis nor the extensions by Anderson and Chru\'sciel apply \cite{AndersonStabilityEvenDS,AndersonChruscielSimple}.
\end{rmk}

\begin{rmk}
  It would be interesting to perform similar gluing constructions for Einstein--matter systems such as the Einstein--Maxwell equations, thus generalizing the family of Kastor--Traschen spacetimes. See also \cite{FriedrichEinsteinMaxwellYangMills}.
\end{rmk}

\subsection{Outline of the paper}

In~\S\ref{S0}, we recall relevant aspects of 0-analysis, i.e.\ the analysis of 0-metrics and associated uniformly degenerate differential operators. In~\S\ref{SS}, we present the details of the gluing construction for multi-SdS spacetimes; in~\S\ref{SK}, we extend this to the KdS case. Throughout the paper, the cosmological constant will be a fixed number
\[
  \Lambda > 0.
\]

\subsection*{Acknowledgments}

Part of this research was conducted during the period I served as a Clay Research Fellow. I would like to thank Maciej Zworski and Sara Kali\v{s}nik for their enthusiasm and support, and Richard Melrose and Andr\'as Vasy for discussions on a related project. I am also grateful to Piotr Chru\'sciel for helpful suggestions. This material is based upon work supported by the National Science Foundation under Grant No.\ DMS-1440140 while I was in residence at the Mathematical Sciences Research Institute in Berkeley, California, during the Fall 2019 semester.

\section{Analysis of uniformly degenerate metrics}
\label{S0}

We recall natural vector bundles associated with uniformly degenerate geometries in~\S\ref{Ss0B} and describe de~Sitter space from this point of view; associated differential operators are discussed in~\S\ref{Ss0D}. In~\S\ref{Ss0E}, we discuss the case of the Einstein vacuum equations in detail.

\subsection{Rescaled tangent and cotangent bundles; de~Sitter space}
\label{Ss0B}

Let $M$ be a smooth $(n+1)$-dimensional manifold with boundary $\pa M\neq\emptyset$; the space of smooth vector fields on $M$ is denoted $\cV(M)=\CI(M;T M)$. Following Mazzeo--Melrose \cite{MazzeoMelroseHyp}, we define the space
\[
  \cV_0(M) := \{ V\in\cV(M) \colon V(p)=0\ \forall\,p\in\pa M \}
\]
of \emph{0-vector fields} (or \emph{uniformly degenerate vector fields}); this is a Lie subalgebra of $\cV(M)$. If $\tau\in\CI(M)$ denotes a boundary defining function, i.e.\ $\pa M=\tau^{-1}(0)$ and $d\tau\neq 0$ on $\pa M$, then $\cV_0(M)=\tau\cV(M)$. In local coordinates $[0,\infty)_\tau\times\R^n_x$, the space $\cV_0(M)$ is the $\CI(M)$-span of the $n+1$ vector fields
\[
  \tau\pa_\tau,\ \tau\pa_{x^i},\ i=1,\ldots,n.
\]
Together, these provide a smooth frame of a vector bundle ${}^0 T M$, called \emph{0-tangent bundle}, which is \emph{nondegenerate} down to $\tau=0$. Thus, for $z\in M$, there is a natural map ${}^0 T_z M\to T_z M$ which is an isomorphism for $z\in M^\circ$. A section $V\in\CI(M;{}^0 T M)$ restricts to a smooth vector field on $M^\circ$ which extends smoothly to a vector field on $M$. This identifies $\cV_0(M)=\CI(M;{}^0 T M)$.

The dual bundle of ${}^0 T M$ is called the \emph{0-cotangent bundle} ${}^0 T^*M$. In local coordinates near $\pa M$, a local frame is given by
\[
  \frac{d\tau}{\tau},\ \frac{d x^i}{\tau},\ i=1,\ldots,n.
\]
These are smooth and nonvanishing down to $\tau=0$.

\begin{definition}
\label{Def0BMetric}
  A \emph{Lorentzian 0-metric} (or \emph{uniformly degenerate Lorentzian metric}) $g$ on $M$ of class $\CI$ is a smooth section $g\in\CI(M;S^2\,{}^0 T^*M)$ which has signature $(n,1)$ at every point of $M$.
\end{definition}

In local coordinates, a smooth Lorentzian 0-metric can be written as
\[
  g = \tau^{-2}\Biggl(g_{0 0}d\tau^2 + 2\sum_{i=1}^n g_{0 i}d\tau\otimes_s d x^i + \sum_{i,j=1}^n g_{i j}d x^i\otimes_s d x^j \Biggr),
\]
with the $g_{\mu\nu}$ smooth functions of $(\tau,x)$, and $(g_{\mu\nu})_{\mu,\nu=0,\ldots,n}$ having signature $(n,1)$. Note that $\tau^2 g\in\CI(M;S^2 T^*M)$ is a smooth Lorentzian metric on $M$ in the usual sense. In particular, the class of metrics $g$ for which $\pa M$ is spacelike for the metric $\tau^2 g$ is well-defined, and independent of the choice of boundary defining function $\tau$; we shall only be concerned with such metrics in the present paper. The Riemannian metric induced on $\pa M$ by $\tau^2 g$ does depend on $\tau$, but its conformal class is well-defined.

The prime example for us is the de~Sitter spacetime in $3+1$ dimensions, with cosmological constant $\Lambda>0$. It can be defined as the cylinder\footnote{Just this one time, we also include the past conformal boundary.}
\begin{subequations}
\begin{equation}
\label{Eq0BdSGlobal}
  M=[-\pi/2,\pi/2]_s\times\Sph^3,\quad
  g_\dS = \frac{3}{\Lambda}\cdot\frac{-d s^2+g_{\Sph^3}}{\cos^2 s},
\end{equation}
whose interior is conformally diffeomorphic to a slab inside the Einstein universe $(\R_s\times\Sph^3,-d s^2+g_{\Sph^3})$; here $g_{\Sph^3}$ is the standard metric on $\Sph^3$. The metric $g_\dS$ is a solution of the Einstein vacuum equation~\eqref{EqIEin}. To see that $g_\dS$ has the required form near $s=\pi/2$, let us take $\tau=\cos s$ near $s=\pi/2$; then
\begin{equation}
\label{Eq0BdSGlobal2}
  g_\dS = (3/\Lambda)\tau^{-2}\bigl( -(1-\tau^2)^{-1} d\tau^2 + g_{\Sph^3} \bigr)\quad\text{on}\ \ [0,\infty)_\tau\times\Sph^3_\psi.
\end{equation}
\end{subequations}
Note that $\tau^2 g_\dS|_{\pa M}=(3/\Lambda)g_{\Sph^3}$ is a Riemannian metric, thus $\pa M=\Sph^3\sqcup\Sph^3$ is spacelike with respect to $g_\dS$.

Other forms of the de~Sitter metric are useful for calculations. Regarding $\Sph^3$ as the unit sphere $\Sph^3\subset\R^4=\R\times\R^3$, we define the map
\begin{subequations}
\begin{equation}
\label{Eq0BdSNoncpt}
\begin{aligned}
  &[0,1) \times \R^3 \ni (\tilde\tau, \tilde x) \mapsto (\tau,\psi) \in [0,\infty) \times \Sph^3, \\
  &\qquad \tau = \Bigl(\Bigl(\frac{1-(\tilde\tau^2-|\tilde x|^2)}{2\tilde\tau}\Bigr)^2+1\Bigr)^{-1/2},\quad
    \psi = \frac{\tau}{\tilde\tau}\Bigl(\frac{1+\tilde\tau^2-|\tilde x|^2}{2},\tilde x\Bigr) \in \Sph^3
\end{aligned}
\end{equation}
from part of the upper half space into de~Sitter space~\eqref{Eq0BdSGlobal2}; here $|\cdot|$ is the Euclidean norm. The de~Sitter metric then takes the form
\begin{equation}
\label{Eq0BdSNoncpt2}
  g_\dS = \frac{3}{\Lambda}\cdot\frac{-d\tilde\tau^2+d\tilde x^2}{\tilde\tau^2}\quad\text{on}\ \ M_u := [0,\infty)_{\tilde\tau} \times \R^3_{\tilde x}.
\end{equation}
\end{subequations}
See \cite[\S6.1]{HintzZworskiHypObs} for these and related calculations (in particular, relating both~\eqref{Eq0BdSGlobal2} and \eqref{Eq0BdSNoncpt2} to the one-sheeted hyperboloid in $(1+(n+1))$-dimensional Minkowski space which is isometric to global de~Sitter space); they imply that the map~\eqref{Eq0BdSNoncpt} composed with $(\tau,\psi)\mapsto(s,\psi)$, $s=\arccos\tau$ in the coordinates~\eqref{Eq0BdSGlobal}, extends analytically to a map $[0,\infty)_{\tilde\tau}\times\R^3_{\tilde x}\to M$ whose image is the complement of the backward causal cone from the point $(-1,0)\in\Sph^3$ at $s=\pi/2$; see \cite[Figure~7]{HintzZworskiHypObs}.

Finally, introducing polar coordinates $\tilde x=\tilde R\tilde\omega$, $\tilde R=|\tilde x|\geq 0$, $\tilde\omega\in\Sph^2$, and putting
\begin{subequations}
\begin{equation}
\label{Eq0BdSStatic}
  (t,r,\omega) = \bigl(-\half\sqrt{\Lambda/3}\log(\tilde R^2-\tilde\tau^2),\,\sqrt{\Lambda/3}\tilde\tau^{-1}\tilde R,\,\tilde\omega)
\end{equation}
in the \emph{cosmological region} $\tilde R>\tilde\tau$, we have
\begin{equation}
\label{Eq0BdSStatic2}
  g_\dS = -\Bigl(\frac{\Lambda r^2}{3}-1\Bigr)^{-1}d r^2 + \Bigl(\frac{\Lambda r^2}{3}-1\Bigr)d t^2 + r^2 g_{\Sph^2}.
\end{equation}
This is a smooth 0-metric on a compactification of $(\sqrt{3/\Lambda},\infty)_r\times\R_t\times\Sph^2_\omega$; indeed, letting $\tau_s=r^{-1}$, and defining
\begin{equation}
\label{Eq0BdSStatic3Mfd}
  M_s := [0,\sqrt{\Lambda/3})_{\tau_s} \times \R_t \times \Sph^2_\omega,
\end{equation}
we have
\begin{equation}
\label{Eq0BdSStatic3}
  g_\dS = \tau_s^{-2}\Bigl(-\bigl(\Lambda/3-\tau_s^2\bigr)^{-1}d\tau_s^2 + \bigl(\Lambda/3-\tau_s^2\bigr)d t^2 + g_{\Sph^2}\Bigr) \in \CI(M_s;S^2\,{}^0 T^*M_s).
\end{equation}
\end{subequations}
The metric induced on $\tau_s=0$ (factoring out the overall scalar factor $\Lambda/3$) is
\begin{equation}
\label{Eq0BdSStaticBdy}
  h_s:=(\Lambda/3)\tau_s^2 g_\dS|_{\pa M}=\frac{\Lambda^2}{9}d t^2+\frac{\Lambda}{3}g_{\Sph^2}.
\end{equation}

We remark that $\tau$, $\tilde\tau$, and $\tau_s$ are equivalent defining functions on the overlaps of the various coordinate charts.

\subsection{Differential operators, function spaces}
\label{Ss0D}

Geometric operators associated with a 0-metric $g$ on an $(n+1)$-dimensional manifold $M$ are examples of \emph{0-differential operators}. Concretely, using abstract index notation, we shall in particular deal with the wave operator on bundles,
\[
  \Box_g u = -u_{;\kappa}{}^\kappa,\quad
  (\Box_g\dot g)_{\mu\nu} = -\dot g_{\mu\nu;\kappa}{}^\kappa,
\]
the divergence and (trace-free) symmetric gradient
\[
  \delta_g\omega = -\omega_{\kappa;}{}^\kappa,\quad
  (\delta_g\dot g)_\mu = -\dot g_{\mu\kappa;}{}^\kappa,\quad
  (\delta_g^*\omega)_{\mu\nu} = \half(\omega_{\mu;\nu}+\omega_{\nu;\mu}),\quad
  \delta_{g,0}^*\omega := \delta_g^*\omega + \tfrac{1}{n+1}g\delta_g\omega,
\]
as well as the `trace reversal\footnote{One has $\tr_g\circ\sfG_g=-\tr_g$ only for $n+1=4$.} operator' on 2-tensors,
\[
  \sfG_g\dot g := \dot g - \half g(\tr_g \dot g).
\]
We define the space $\Diff_0^m(M)$ of $m$-th order 0-differential operators to consist of all locally finite linear combinations of up to $m$-fold products of 0-vector fields. Then $\Box_g\in\Diff_0^2(M)$ for the scalar wave operator, $\Box_g\in\Diff_0^2(M;S^2\,{}^0 T^*M)$ for the tensor wave operator acting on symmetric 2-tensors, $\delta_g^*\in\Diff_0^1(M;{}^0 T^*M,S^2\,{}^0 T^*M)$, and so forth. For instance, for the metric~\eqref{Eq0BdSNoncpt2} in $3+1$ dimensions, we have
\[
  3\Lambda^{-1}\Box_g = (\tilde\tau\pa_{\tilde\tau})^2 - 3\tilde\tau\pa_{\tilde\tau} + \tilde\tau^2\Delta_{\tilde x},\quad \Delta_{\tilde x}:=-\sum_{i=1}^n \pa_{\tilde x^i}^2;
\]
for the other operators, we will give explicit expressions in~\S\ref{Ss0E}.

Associated with any 0-differential operator $A\in\Diff_0^m(M)$ is its \emph{indicial family} (see also \cite[\S2]{MazzeoEdge}) $I(A,\lambda)\in\CI(\pa M)$, $\lambda\in\C$, which is defined by
\[
  A(\tau^\lambda u) = \tau^\lambda I(A,\lambda)u + \cO(\tau^{\lambda+1}),\quad u\in\CI(M),
\]
for any defining function $\tau$; this is independent of the choice of defining function. Concretely,
\begin{equation}
\label{Eq0BIndFam}
  A=\sum_{i+|\alpha|\leq m}a_{i\alpha}(\tau,x)(\tau\pa_\tau)^i(\tau\pa_x)^\alpha\ \Longrightarrow \ I(A,\lambda)=\sum_{i\leq m}a_{i 0}(0,x)\lambda^i.
\end{equation}
Thus, $I(A,\lambda)$ is a polynomial of degree $m$ in $\lambda$, depending smoothly on $x\in\pa M$. We call the roots of the polynomial $\lambda\mapsto I(A,\lambda)(x)$ the \emph{indicial roots} of $A$; if they are independent of $x$, we say that \emph{$A$ has constant indicial roots}.

If $A\in\Diff_0^k(M;E,F)$ acts between sections of vector bundles $E,F\to M$, we define $I(A,\lambda)\in\CI(\pa M;\Hom(E,F)|_{\pa M})$ similarly; the indicial roots of $A$ are then those $\lambda$ (depending on $x\in\pa M$) for which $I(A,\lambda)$ fails to be invertible.

Lower order terms of $A$ as in~\eqref{Eq0BIndFam} can be defined upon fixing a collar neighborhood $[0,\eps)_\tau\times\pa M$ of $\pa M$: writing
\[
  A\equiv\sum_{k=0}^m \tau^k \sum_{\genfrac{}{}{0pt}{}{i+|\alpha|\leq m}{|\alpha|=k}} a^{(k)}_{i\alpha}(x)(\tau\pa_\tau)^i\pa_x^\alpha.
\]
modulo terms of the form $\tau^{k+1}a(\tau,x)(\tau\pa_\tau)^i\pa_x^\alpha$, $a\in\CI$,\footnote{The reader familiar with b-analysis \cite{MelroseAPS} will recognize this as the Taylor expansion of $A$ into dilation-invariant (with respect to $\tau$) b-differential operators on $[0,\eps)_\tau\times\pa M$.} we define
\begin{equation}
\label{Eq0BIndFamlot}
  I(A[\tau^k],\lambda) := \sum_{\genfrac{}{}{0pt}{}{i+|\alpha|\leq m}{|\alpha|=k}} a_{i\alpha}(x)\lambda^i\pa_x^\alpha \in \Diff^k(\pa M).
\end{equation}
Thus, $I(A[1],\lambda)=I(A,\lambda)$. If $A$ acts between sections of vector bundles $E,F\to M$, the same works, with $a_{i\alpha}^{(k)}\in\CI(\pa M;\Hom(E,F)|_{\pa M})$, upon choosing an identification of $E,F$ in the collar neighborhood with pullbacks of $E|_{\pa M},F|_{\pa M}$ along the projection $[0,\eps)\times\pa M\to\pa M$.

We record some standard calculations involving the indicial family. If $A\in\Diff_0^m(M;E,F)$ and $\lambda\in\C$ are such that $\ker I(A,\lambda)$ is a $\CI$ vector subbundle of $E|_{\pa M}\to\pa M$, then $A(\tau^\lambda u)=\cO(\tau^{\lambda+1})$ for all $u\in\CI(M)$ with $u|_{\pa M}\in\CI(\pa M;\ker I(A,\lambda))$. For such $u$, we moreover have
\begin{equation}
\label{Eq0BIndLog}
  A(\tau^\lambda(\log\tau)u) = \tau^\lambda \pa_\lambda I(A,\lambda)u + \cO(\tau^{\lambda+1}\log\tau);
\end{equation}
this can be seen by differentiating the relationship $A(\tau^\lambda v)=\tau^\lambda I(A,\lambda)v+\tau^{\lambda+1}\tilde v$ (with $\tilde v\in\CI(M)$ depending smoothly on $\lambda$) in $\lambda$ and plugging in $v=u$. We also record that in a collar neighborhood of $\pa M$, we have, for such $u$, $A(\tau^\lambda u)=\tau^{\lambda+1}I(A[\tau],\lambda)u+\cO(\tau^{\lambda+2})$.

The $L^2$-based function spaces corresponding to 0-analysis are \emph{weighted 0-Sobolev spaces}
\[
  \tau^m H_{0,\loc}^k(M) = \{ \tau^m u \colon u\in H_{0,\loc}^k(M) \}.
\]
For $k=0$, we define $H_{0,\loc}^0(M)=L^2_\loc(M)$ to be the space of locally\footnote{On $M$, thus this does encode uniformity down to compact subsets of $\pa M$.} square integrable functions on $M$ relative for a smooth 0-density, i.e.\ a smooth positive section of the 0-density bundle $|\Lambda^{n+1}\,{}^0 T^*M|$; in local coordinates, such a density takes the form $a(\tau,x)|\frac{d\tau\,d x}{\tau^{n+1}}|$ with $0<a\in\CI$, a typical example being the volume density $|d g|$ of a Lorentzian 0-metric $g$. For $k\in\N$, we define $H_{0,\loc}^k(M)$ to consist of all $u\in L^2(M)$ so that $P u\in L^2(M)$ for all $P\in\Diff_0^k(M)$. If $M$ is compact, the space $\tau^m H_0^k(M)=\tau^m H_{0,\loc}^k(M)$ carries the structure of a Hilbert space. More generally, if $M$ is noncompact and $\Omega\subset M$ is open with compact closure, then
\begin{equation}
\label{Eq0BSobolevExt}
  \tau^m\Hext_0^k(\Omega):=\{ u|_\Omega \colon u\in\tau^m H_0^k(M) \}
\end{equation}
is a Hilbert space.

For compact $M$, we can characterize the space $H_0^m(M)$ using a covering of $M$ by `uniformly degenerating cubes' as follows: if a distribution $u$ is supported in a coordinate patch $[0,2)_\tau\times\R^n_x$, and in fact in $\tau\leq 1$, $|x|\leq 1$, then\footnote{We write $A\sim B$ to mean the existence of a constant $C>1$, independent of $u$, so that $C^{-1}B\leq A\leq C B$.}
\begin{align*}
  &\|u\|_{H_0^m(M)}^2 \sim \sum_{k=0}^\infty \sum_{\genfrac{}{}{0pt}{}{\alpha\in\Z^n}{|\alpha|\leq 2^k}} \|u_{k,\alpha}\|_{H^m([-1/2,1/2]^{n+1})}^2, \\
  &\qquad u_{k,\alpha}(T,X) := u\bigl(2^{-k}(1+T), 2^{-k}(\alpha+X)\bigr),\quad (T,X)\in\R\times\R^n,\ |T|,|X|\leq\half.
\end{align*}
Note that $u_{k,\alpha}$ sees $u$ on a cube of size $2^{-k}$ centered at a point at a distance $2^{-k}$ from the boundary, and $\pa_T,\pa_X$ are of the same size as $\tau\pa_\tau$, $\tau\pa_x$. We leave the notational changes required to drop the support condition to the reader; see also \cite[Proof of Corollary~(3.23)]{MazzeoEdge}. An important consequence of this characterization is that algebra properties of Sobolev spaces on $\R^n$ immediately carry over to 0-Sobolev spaces; in particular:

\begin{lemma}
\label{Lemma0BSobAlg}
  On an $(n+1)$-dimensional compact manifold $M$ with boundary, and for $k>(n+1)/2$, the space $H_0^k(M)$ is an algebra. More generally, we have
  \[
    u_j\in\tau^{m_j}H_0^{k_j}(M),\ j=1,2\ \Longrightarrow\ u_1 u_2\in\tau^{m_1+m_2}H_0^{k_1+k_2}(M).
  \]
\end{lemma}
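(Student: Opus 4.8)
The plan is to reduce the assertion to uniform Euclidean estimates on a single reference cube by means of the dyadic `uniformly degenerating cubes' characterization recorded just above. First, the $\tau$-weights are harmless: by definition $\tau^{m_j}H_0^{k_j}(M)=\{\tau^{m_j}v_j:v_j\in H_0^{k_j}(M)\}$, with $\tau$ a fixed global boundary defining function (all choices being equivalent, and $M$ compact), so writing $u_j=\tau^{m_j}v_j$ we have $u_1 u_2=\tau^{m_1+m_2}(v_1 v_2)$, and $u_1 u_2\in\tau^{m_1+m_2}H_0^k(M)$ holds if and only if $v_1 v_2\in H_0^k(M)$. Thus it suffices to treat the unweighted statement for $v_j\in H_0^{k_j}(M)$ with $k_j>(n+1)/2$; taking $k_1=k_2=k$ yields the algebra property, and the general multiplicative estimate is the same argument with the corresponding Euclidean multiplication theorem in place of the Moser estimate.

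Next I would localize with a finite partition of unity. On a coordinate patch contained in $M^\circ$ one has $H_0^k=H^k_\loc$, and there the claim is the classical Sobolev multiplication theorem on $\R^{n+1}$; so the work is confined to boundary charts $[0,2)_\tau\times\R^n_x$. There, after reducing as in the text to distributions supported in $\{\tau\leq 1,\ |x|\leq 1\}$, the characterization reads $\|w\|_{H_0^m(M)}^2\sim\sum_\beta\|w_\beta\|_{H^m(Q)}^2$, the sum running over the dyadic cubes $\beta$ of the decomposition, with $Q=[-\half,\half]^{n+1}$ a \emph{single fixed} unit cube and $w_\beta$ the corresponding rescaling of $w$ to $Q$. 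The crucial structural point is that this rescaling is simply pullback by an affine map, hence multiplicative and with no Jacobian factor: $(v_1 v_2)_\beta=(v_1)_\beta\,(v_2)_\beta$. So one must bound $\sum_\beta\|(v_1)_\beta\,(v_2)_\beta\|_{H^m(Q)}^2$, and here I would invoke the Euclidean Moser estimate on the fixed cube, $\|fg\|_{H^m(Q)}\lesssim\|f\|_{H^m(Q)}\|g\|_{L^\infty(Q)}+\|f\|_{L^\infty(Q)}\|g\|_{H^m(Q)}$, whose constant is independent of $\beta$ precisely because $Q$ is the same cube at every scale.

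To carry out the summation I would first record that $H_0^{k_j}(M)\hookrightarrow L^\infty(M)$ for $k_j>(n+1)/2$: one has $\|v_j\|_{L^\infty(M)}\sim\sup_\beta\|(v_j)_\beta\|_{L^\infty(Q)}\lesssim\sup_\beta\|(v_j)_\beta\|_{H^{k_j}(Q)}\leq\bigl(\sum_\beta\|(v_j)_\beta\|_{H^{k_j}(Q)}^2\bigr)^{1/2}\sim\|v_j\|_{H_0^{k_j}(M)}$, which is nothing but the inclusion $\ell^2\hookrightarrow\ell^\infty$ over the index set of cubes combined with the (uniform) Sobolev embedding $H^{k_j}(Q)\hookrightarrow L^\infty(Q)$. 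Substituting the Moser estimate into the sum, and in each summand estimating one factor by its $H^m(Q)$-norm (which is then summed in $\beta$) while bounding the $L^\infty(Q)$-norm of the other factor by the full norm $\lesssim\|v_j\|_{H_0^{k_j}(M)}$ (which is pulled out of the sum), one arrives at $\|v_1 v_2\|_{H_0^m(M)}\lesssim\|v_1\|_{H_0^{k_1}(M)}\|v_2\|_{H_0^{k_2}(M)}$. In the algebra case $m=k_1=k_2=k$ this is immediate; the general case is identical once the less regular factor is treated as a multiplier on the $0$-Sobolev space of the other, which is again proved cube-by-cube from the Euclidean multiplication estimate on $Q$ with uniform constants.

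The one step I expect to require genuine care is the bookkeeping in this last summation: each product term must be split so that precisely one factor is measured by the full $0$-Sobolev norm (equivalently an $\ell^2$-sum over cubes) and the complementary factor only in $L^\infty$ (equivalently an $\ell^\infty$-bound over cubes), exactly as in the textbook proof of the Sobolev algebra property on $\R^n$; and one must check that the Euclidean embedding and Moser constants are uniform over the countably many cubes, which is exactly what the scale invariance built into the cube decomposition guarantees. Beyond this, the argument is a routine transcription of Euclidean Sobolev calculus, as the text anticipates.
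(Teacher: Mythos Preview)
Your proposal is correct and follows exactly the approach the paper indicates: the paper does not give a detailed proof but simply remarks, in the sentence immediately preceding the lemma, that the `uniformly degenerating cubes' characterization causes algebra properties of Sobolev spaces on $\R^n$ to carry over to $0$-Sobolev spaces. Your write-up is a faithful and accurate elaboration of that one-line argument---stripping the weights, localizing, using the multiplicativity of the pullback to the reference cube, and invoking the Euclidean Moser/multiplication estimate with uniform constants.
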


Solutions of uniformly degenerate equations often have better regularity and are \emph{conormal}, for instance as shown for solutions of the wave equation on de~Sitter type spaces in \cite{VasyWaveOndS}. For $\alpha\in\R$, we define the space of \emph{conormal functions} relative to $\tau^\alpha L^\infty(M)$ by
\[
  \cA^\alpha(M) := \{ u\in \CI(M^\circ) \colon P(\tau^{-\alpha}u) \in L^\infty(M)\ \forall\,P\in\Diffb(M) \},
\]
where $\tau\in\CI(M)$ is a boundary defining function, and $\Diffb(M)$ is the space of all \emph{b-differential operators} on $M$: locally, these are finite products of the vector fields $\tau\pa_\tau$ and $\pa_{x^j}$ with $\CI(M)$ coefficients. The space
\begin{equation}
\label{Eq0BConormalDiff}
  \cA^\alpha\Diff_0^m(M)
\end{equation}
of 0-differential operators with conormal coefficients consists of all locally finite linear combinations of differential operators of the form $a P$, $a\in\cA^\alpha(M)$, $P\in\Diff_0^m(M)$.

\subsection{Einstein vacuum equation and its linearization}
\label{Ss0E}

We make some general observations about the following nonlinear operator for 0-metrics:

\begin{definition}
\label{Def0EEinOp}
  For a Lorentzian metric $g$ on a manifold $M$, define
  \begin{equation}
  \label{Eq0EEinOp}
    P_0(g) := 2(\Ric(g)-\Lambda g).
  \end{equation}
\end{definition}

For definiteness, we now work in $3+1$ dimensions on the spacetime manifold
\[
  M = [0,1)_\tau\times X,\qquad \dim X=3,
\]
where $X$ is a $3$-dimensional manifold without boundary such as $\R^3$ or $\Sph^3$; the boundary $\pa M=\tau^{-1}(0)$ will play the role of the future conformal boundary. This product structure allows us to identify differential operators (in particular: vector fields) on $X$ with `spatial' differential operators on $M$. In particular, we can pull back $T X$ (along the projection $M\to X$) to a bundle over $M$, still denoted $T X$, which allows us to split ${}^0 T M$ as
\[
  {}^0 T M = \R e_0 \oplus \tau T X,\quad e_0=\tau\pa_\tau;
\]
that is, we identify a 0-vector field $u e_0 + \tau V$, $u\in\CI(M)$, $V\in\CI(M;T X)\subset\cV(M)$, with the pair $(u,V)$. This induces splittings
\begin{equation}
\label{Eq0ESplit}
\begin{split}
  {}^0 T^*M &= \R e^0 \oplus \tau^{-1}T^*X,\quad e^0=\frac{d\tau}{\tau}, \\
  S^2\,{}^0 T^*M &= \R(e^0)^2 \oplus \bigl(2 e^0\otimes_s\tau^{-1}T^*X\bigr) \oplus \tau^{-2}S^2 T^*X;
\end{split}
\end{equation}
that is, we identify a section $u(e^0)^2+2 e^0\otimes\tau^{-1}\omega+\tau^{-2}k$ of $S^2\,{}^0 T^*M$ with the triple $(u,\omega,k)$, where $u\in\CI(M)$, $\omega\in\CI(M;T^*X)$, $k\in\CI(M;S^2 T^*X)$. Given a Riemannian metric $h$ on $X$, we can split $S^2 T^*X$ into pure trace ($\R h$) and trace-free parts ($\ker\tr_h$), thereby refining~\eqref{Eq0ESplit} to
\begin{equation}
\label{Eq0ESplitTr}
  S^2\,{}^0 T^*M = \R(e^0)^2 \oplus \bigl(2 e^0\otimes_s\tau^{-1}T^*X\bigr) \oplus \R\tau^{-2}h \oplus \tau^{-2}\ker\tr_h.
\end{equation}
We shall denote the components of $\dot g\in S^2\,{}^0 T^*M$ in the four summands in~\eqref{Eq0ESplitTr} by $\dot g_{N N}\in\R$ (normal-normal), $\dot g_{N T}\in T^*X$ (normal-tangential), $\dot g_{T T 1}\in\R$ (tangential-tangential, pure trace), $\dot g_{T T 0}\in\ker\tr_h$ (tangential-tangential, trace-free).

We shall first study geometric operators associated with a product metric
\begin{equation}
\label{Eq0EMetricProd}
  g = (3/\Lambda)\frac{-d\tau^2+h(x,d x)}{\tau^2}.
\end{equation}
We denote the exterior derivative on $X$, pulled back to a spatial operator on $M$, by $d_X$.

\begin{lemma}
\label{Lemma0EOps}
  In the splittings~\eqref{Eq0ESplit}, we have
  \[
    \delta_g^* = \begin{pmatrix} e_0 & 0 \\ \half\tau d_X & \half(1+e_0) \\ h & \tau\delta_h^* \end{pmatrix}, \quad
    3\Lambda^{-1}\delta_g = \begin{pmatrix} e_0-3 & \tau\delta_h & -\tr_h \\ 0 & e_0-4 & \tau\delta_h \end{pmatrix},
  \]
  and, as operators on symmetric 2-tensors,
  \[
    \sfG_g = \begin{pmatrix} \half & 0 & \half\tr_h \\ 0 & 1 & 0 \\ \half h & 0 & \sfG_h \end{pmatrix}, \quad
    3\Lambda^{-1}\Box_g = e_0^2 - 3 e_0 + \tau^2\Delta_h + \begin{pmatrix} -6 & 4\tau\delta_h & -2\tr_h \\ -2\tau d_X & -6 & 2\tau\delta_h \\ -2 h & -4\tau\delta_h^* & -2 \end{pmatrix}.
  \]
  Finally, if $R_{\kappa\lambda\mu\nu}$ and $\Ric_{\mu\nu}$ denote the Riemann curvature tensor and Ricci tensor of $g$, then the operator $\sR_g(u)_{\kappa\mu}=R^\nu{}_{\kappa\mu}{}^\rho u_{\nu\rho}+\half(\Ric_\kappa{}^\nu u_{\nu\mu}+\Ric_\mu{}^\nu u_{\kappa\nu})$ is equal to
  \[
    3\Lambda^{-1}\sR_g = \begin{pmatrix} 3 & 0 & \tr_h \\ 0 & 4 & 0 \\ h & 0 & 4-h\tr_h \end{pmatrix} + \tau^2\begin{pmatrix} 0 & 0 & 0 \\ 0 & \half\Ric(h) & 0 \\ 0 & 0 & \sR_h \end{pmatrix}.
  \]
\end{lemma}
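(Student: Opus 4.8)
The plan is to reduce all five identities to a single computation---the Levi-Civita connection of $g$ in the $0$-frame---and then to substitute into the definitions of \S\ref{Ss0D}. Write $g=(3/\Lambda)\tau^{-2}\bar g$ with $\bar g=-d\tau^2+h$ the product Lorentzian metric on $M=[0,1)_\tau\times X$; the overall constant $3/\Lambda$ plays no role in Christoffel symbols---it only explains the prefactor $3\Lambda^{-1}$ in front of $\delta_g$, $\Box_g$, $\sR_g$, each of which carries exactly one inverse metric---so it suffices to work with $\tau^{-2}\bar g=e^{2 f}\bar g$, $f=-\log\tau$. The connection of the product $\bar g$ is block diagonal: flat in the $\tau$-direction and equal to $\nabla^h$ in the $X$-directions. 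Feeding this and $d f=-d\tau/\tau=-e^0$ into the conformal-change formula for the Christoffel symbols and re-expressing the answer in the frame $e_0=\tau\pa_\tau$, $E_i=\tau\pa_{x^i}$ (with dual coframe $e^0=d\tau/\tau$, $\tau^{-1}d x^i$) yields the clean relations $\nabla^g_{e_0}e_0=0$, $\nabla^g_{e_0}E_i=0$, $\nabla^g_{E_i}e_0=-E_i$, and $\nabla^g_{E_i}E_j=-h(\pa_{x^i},\pa_{x^j})\,e_0+\nabla^h_{E_i}E_j$; torsion-freeness is visible from $[e_0,E_i]=E_i$ and $[E_i,E_j]=0$ in a coordinate frame on $X$, and metric compatibility follows from $g(e_0,e_0)=-3/\Lambda$, $g(E_i,E_j)=(3/\Lambda)h(\pa_{x^i},\pa_{x^j})$.

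From here the formulas for $\delta_g^*$ and $\sfG_g$ are essentially immediate. For $\delta_g^*$, I would evaluate $(\delta_g^*\omega)(V,W)=\half\bigl(V(\omega(W))-\omega(\nabla^g_V W)+W(\omega(V))-\omega(\nabla^g_W V)\bigr)$ on the pairs $(e_0,e_0)$, $(e_0,E_i)$, $(E_i,E_j)$ for $\omega=u\,e^0+\tau^{-1}\omega_X$ and read off the components in the splitting~\eqref{Eq0ESplit}: the term $-E_i$ in $\nabla^g_{E_i}e_0$ produces the $\half(1+e_0)$ in the $(2,2)$ slot and the $\half\tau d_X$ in the $(2,1)$ slot, while the term $-h\,e_0$ in $\nabla^g_{E_i}E_j$ produces the $h$ in the $(3,1)$ slot and leaves $\tau\delta_h^*$ on the spatial part. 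For $\sfG_g$ one only needs $\tr_g$ in the splitting, which is elementary from the fiber metric just recorded; then $\sfG_g\dot g=\dot g-\half g\,\tr_g\dot g$ gives the stated matrix. The operator $\delta_g$ can then be obtained either directly from $-\dot g_{\mu\kappa;}{}^\kappa$ or, more economically, as the formal adjoint of $\delta_g^*$ with respect to the $0$-$L^2$ pairing built from $g$; the $0$-density $|d g|\sim\tau^{-4}|d\tau\,d x|$ converts $\tau\pa_\tau$ into $-(\tau\pa_\tau)+\mathrm{const}$ (this produces the shifts $e_0-3$ and $e_0-4$), and the indefinite fiber metric $g(e_0,e_0)=-3/\Lambda$ produces the signs on the trace and divergence entries.

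The two lengthy computations are $\Box_g$ and $\sR_g$. For $\Box_g=-g^{\kappa\lambda}\bigl(\nabla^g_{e_\kappa}\nabla^g_{e_\lambda}-\nabla^g_{\nabla^g_{e_\kappa}e_\lambda}\bigr)$ acting on symmetric $2$-tensors, evaluating in the $0$-frame splits the answer into a ``diagonal'' piece---which on each summand of~\eqref{Eq0ESplit} reproduces the scalar operator $e_0^2-3 e_0+\tau^2\Delta_h$ already recorded in~\S\ref{Ss0D}---plus corrections: commuting covariant derivatives past the $E_i$ via $\nabla^g_{E_i}e_0=-E_i$ and $\nabla^g_{E_i}E_j=-h\,e_0+\nabla^h_{E_i}E_j$ generates the first-order off-diagonal entries $c\,\tau d_X$, $c\,\tau\delta_h$, $c\,\tau\delta_h^*$ (with $c\in\{\pm2,\pm4\}$), and the curvature of $g$ generates the constant entries $-6$ on the $u$- and $\omega$-slots, $-2$ on the $k$-slot, and the trace couplings $-2\tr_h$ and $-2 h$. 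For $\sR_g$ one computes $\Riem(g)$ and $\Ric(g)$ from $\Riem(\bar g)$---which is supported in the spatial block and equals $\Riem(h)$---again via the conformal-change formula: since $\mathrm{Hess}_{\bar g}f-d f\otimes d f+\half|d f|_{\bar g}^2\,\bar g=-\half\tau^{-2}\bar g$, one gets $\Riem(g)=\tau^{-2}\Riem(h)$ plus an algebraic curvature-type term built from $g$ alone, the latter contributing the $\tau$-independent matrix in $3\Lambda^{-1}\sR_g$ and the former the $\tau^2$ matrix with entries $\half\Ric(h)$ and $\sR_h$. I expect the main obstacle to be purely organizational: in the $\Box_g$ and $\sR_g$ computations one must keep careful track of which tensor component of the output lands in which summand of~\eqref{Eq0ESplit}, along with all signs and combinatorial factors of $2$. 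As sanity checks one can verify that the scalar entry of $\Box_g$ matches the wave operator of~\S\ref{Ss0D}, and that for $h=g_{\Sph^3}$ the formulas are consistent with the Einstein condition $\Ric(g_\dS)=\Lambda g_\dS$.
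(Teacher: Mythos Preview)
Your plan is correct and follows essentially the same route as the paper: compute the Levi-Civita connection of $g$ in the $0$-frame, record the Riemann and Ricci tensors, and then substitute into the definitions. The paper's proof simply lists the connection coefficients (on the coframe: $\nabla_{e_0}e^\mu=0$, $\nabla_{e_i}e^0=h_{ik}e^k$, $\nabla_{e_i}e^k=\delta_i^k e^0-\tau\Gamma(h)^k_{ij}e^j$) and the nonzero curvature components, then declares the rest ``direct computation.'' Your use of the conformal-change formulas for $\nabla$ and $\Riem$ (with $g=e^{-2\log\tau}\bar g$) is an equivalent and arguably cleaner way to arrive at the same connection and curvature data; your shortcut of obtaining $\delta_g$ as the formal adjoint of $\delta_g^*$ with respect to the $0$-density is a nice economy the paper does not mention.

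One small correction: the constant entries $-6,-6,-2$ and the trace couplings in the $\Box_g$ matrix do \emph{not} come from ``the curvature of $g$.'' They are the zeroth-order terms that arise from the connection acting twice on the tensor indices of $u$ (products of Christoffel symbols, not their commutators). Curvature enters only in $\sR_g$, not in the rough Laplacian $\Box_g$. This is a terminological slip, not a gap---your bookkeeping via the connection formulas you recorded will produce the correct constants regardless.
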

\begin{proof}
  In local coordinates $x^1,x^2,x^3$ on $X$, and setting $e_i=\tau\pa_{x^i}$, $e^i=\tau^{-1}d x^i$, we compute $\nabla_{e_0}e^\mu=0$ for all $\mu$, $\nabla_{e_i}e^0=h_{i k}e^k$, and $\nabla_{e_i}e^k=\delta_i^k e_0-\tau\Gamma(h)_{i j}^k e^j$, where $\delta_i^k$ is the Kronecker delta. Moreover, we have $R^\lambda{}_{\kappa\mu}{}^\nu=0$ except for $3\Lambda^{-1} R^0{}_{k i}{}^0=h_{i k}$, $3\Lambda^{-1} R^0{}_{i 0}{}^j=\delta_i^j$, $3\Lambda^{-1} R^l{}_{0 i}{}^0=\delta_i^l$, $3\Lambda^{-1} R^l{}_{0 0}{}^j=h^{l j}$, $3\Lambda^{-1} R^l{}_{k i}{}^j=\delta_i^l\delta_k^j-h_{i k}h^{j l}+\tau^2 R(h)^l{}_{k i}{}^j$, where $h^{j l}=h^{-1}(d x^j,d x^k)$ denotes the coefficients of the dual metric of $h$; this gives $\Ric_{0 0}=-\Lambda$, $\Ric_{i 0}=0$, and $\Ric_{i j}=\Lambda h_{i j}+(\Lambda/3)\tau^2\Ric(h)_{i j}$. The expressions in the lemma follow from this by direct computation.
\end{proof}

The calculations in the proof imply that $\Ric(g)-\Lambda g\in\tau^2\CI(M;S^2\,{}^0 T^*M)$, i.e.\ any $g$ of the form~\eqref{Eq0EMetricProd} satisfies the Einstein equation modulo $\cO(\tau^2)$ errors.

In \cite{DeTurckPrescribedRicci,GrahamLeeConformalEinstein}, the linearization of $P_0$ is computed as
\begin{equation}
\label{Eq0ELinRic}
  L_{0,g}\dot g := D_g P_0(\dot g) = \Box_g - 2\delta_g^*\delta_g\sfG_g + 2\sR_g - 2\Lambda.
\end{equation}
Using Lemma~\ref{Lemma0EOps}, we compute the leading and subleading order behavior of $L_{0,g}$:

\begin{cor}
\label{Cor0EInd}
  For $g$ as in~\eqref{Eq0EEinOp}, and in the bundle splitting~\eqref{Eq0ESplitTr}, we have
  \begin{align*}
    3\Lambda^{-1}I(L_{0,g},\lambda) &= \begin{pmatrix} 3\lambda-6 & 0 & -\lambda(3\lambda-6) & 0 \\ 0 & 0 & 0 & 0 \\ 6-\lambda & 0 & -\lambda(6-\lambda) & 0 \\ 0 & 0 & 0 & \lambda(\lambda-3) \end{pmatrix}, \\
    3\Lambda^{-1}I(L_{0,g}[\tau],\lambda) &= \begin{pmatrix} 0 & 2(1-\lambda)\delta_h & 0 & 0 \\ 2 d_X & 0 & -2\lambda d_X & -\lambda\delta_h \\ 0 & \tfrac{2}{3}(\lambda-5)\delta_h & 0 & 0 \\ 0 & (4-2\lambda)\delta_{h,0}^* & 0 & 0 \end{pmatrix}.
  \end{align*}
\end{cor}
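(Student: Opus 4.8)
The plan is to substitute the explicit formulas for $\Box_g$, $\delta_g$, $\delta_g^*$, $\sfG_g$ and $\sR_g$ from Lemma~\ref{Lemma0EOps} into the expression~\eqref{Eq0ELinRic} for $L_{0,g}$, arrange the result as a Taylor series in $\tau$ of dilation-invariant b-operators as in~\eqref{Eq0BIndFamlot}, and read off the coefficients of the two lowest powers of $\tau$. Since $\tau^2\Delta_h$ inside $\Box_g$ and the $\tau^2$-parts of $\sR_g$ (as well as the $\tau^2\CI$ part of $\Ric(g)-\Lambda g$ noted after the proof of Lemma~\ref{Lemma0EOps}) all carry a factor $\tau^2$, they contribute nothing to either $I(L_{0,g},\cdot)$ or $I(L_{0,g}[\tau],\cdot)$, so only the displayed matrices in Lemma~\ref{Lemma0EOps} are relevant.

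For the indicial family itself I would apply~\eqref{Eq0BIndFam}: replace $e_0=\tau\pa_\tau$ by $\lambda$ and discard every term carrying a positive power of $\tau$ (in particular all of $\tau d_X$, $\tau\delta_h$, $\tau\delta_h^*$). In the three-summand splitting~\eqref{Eq0ESplit} this leaves a $3\times 3$ matrix whose entries are polynomials in $\lambda$ multiplying the algebraic bundle maps $\mathrm{id}$, $h\,\cdot\,$, $\tr_h$, $\sfG_h$, where one must remember to commute the two $e_0$'s occurring in $\delta_g^*\delta_g\sfG_g$ past the $\tau$-factors before discarding them — this produces the shifts $e_0\mapsto e_0+1$. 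I then refine to the four-summand splitting~\eqref{Eq0ESplitTr} via $S^2 T^*X=\R h\oplus\ker\tr_h$, using $\tr_h(\phi h)=3\phi$, $\sfG_h|_{\R h}=-\tfrac12$, $\sfG_h|_{\ker\tr_h}=\mathrm{id}$. One checks that $\delta_g\sfG_g$ kills $\ker\tr_h$ modulo $\cO(\tau)$, so the trace-free block of $-2\delta_g^*\delta_g\sfG_g$ contributes nothing indicially and the $(4,4)$ entry $\lambda(\lambda-3)$ comes from $\Box_g+2\sR_g-2\Lambda$ alone; the remaining $3\times 3$ block in the slots (normal-normal, normal-tangential, tangential-tangential pure-trace) then follows by a short direct computation.

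For the subleading operator $I(L_{0,g}[\tau],\cdot)$ I would collect exactly the coefficients of $\tau^1$: the $\tau\delta_h$-, $\tau d_X$-, $\tau\delta_h^*$-entries appearing directly in $\Box_g$, together with the cross terms of the triple composition $\delta_g^*\delta_g\sfG_g$ in which precisely one of the three factors supplies its $\tau^1$-part and the other two their $\tau^0$-parts — again commuting every $\tau$ to the left past an $e_0$ (shifting $e_0\mapsto e_0+1$) so as to present the result in the form $\tau\cdot(\text{b-operator})$ required by~\eqref{Eq0BIndFamlot}. Refining to~\eqref{Eq0ESplitTr} using the standard identities $\tr_h\circ\delta_h^*=-\delta_h$, $\delta_h\circ(h\,\cdot\,)=-d_X$, and $\delta_{h,0}^*=\delta_h^*+\tfrac13 h\delta_h$ then yields the stated matrix. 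I expect the main obstacle to be purely organizational: keeping precise track, in the second-order composition $\delta_g^*\delta_g\sfG_g$, of which factor supplies the single $\tau$ and of the accumulated $e_0\mapsto e_0+1$ shifts, and then correctly separating the $\R h$ and $\ker\tr_h$ pieces — term by term this is routine but easy to spoil by a sign or a shift. (As a consistency check, the normal-normal/normal-tangential subleading entry $2(1-\lambda)\delta_h$ arises as $4\delta_h$ from the matrix part of $\Box_g$ minus $2(1+\lambda)\delta_h$ from $-2\delta_g^*\delta_g\sfG_g$.)
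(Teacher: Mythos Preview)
Your proposal is correct and follows exactly the same route as the paper's proof, which is a direct computation from Lemma~\ref{Lemma0EOps} using the commutation $e_0\tau=\tau(e_0+1)$ and the identities $\delta_h h=-d_X$, $\delta_h^*=\delta_{h,0}^*-\tfrac{1}{3}h\delta_h$. Your write-up is simply more explicit than the paper's one-line hint; the consistency check you give for the $(1,2)$ entry is accurate.
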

\begin{proof}
  In the calculation of $\delta_g^*\delta_g\sfG_g$, one needs to use $e_0\tau=\tau(e_0+1)$ as well as $\delta_h h=-d_X$ and $\delta_h^*=\delta_{h,0}^*-\frac{1}{3}h\delta_h$ to obtain the stated expression for $I(L_{0,g}[\tau],\lambda)$.
\end{proof}

Typically, metrics do have $\tau$-dependence. The following two lemmas describe the (linearized) Einstein operator for lower order perturbations of~\eqref{Eq0EMetricProd}.

\begin{lemma}
\label{Lemma0EEinLinPert}
  If $\alpha>0$, $\tilde g\in\cA^\alpha(M;S^2\,{}^0 T^*M)$, then $L_{0,g+\tilde g}-L_{0,g}\in\cA^\alpha\Diff_0^2(M;S^2\,{}^0 T^*M)$, see~\eqref{Eq0BConormalDiff}. If $\tilde g\in\tau^m\CI$ for some $m\in\N$, then $L_{0,g+\tilde g}-L_{0,g}\in\tau^m\Diff_0^2$.
\end{lemma}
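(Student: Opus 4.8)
The plan is to exploit that $L_{0,g}$ is a \emph{geometric} second-order 0-differential operator: in any 0-coordinate chart $[0,\eps)_\tau\times\R^n_x$, with respect to the 0-frame $e_0=\tau\pa_\tau$, $e_j=\tau\pa_{x^j}$ of ${}^0 TM$ and the frames of ${}^0 T^*M$, $S^2\,{}^0 T^*M$ it induces, the matrix coefficients of $L_{0,g}$ are fixed \emph{universal smooth functions} of the frame components $g_{\mu\nu}$ ($\mu,\nu=0,\dots,n$) of $g$, of the entries $g^{\mu\nu}$ of the inverse matrix, and of the derivatives $e_\mu g_{\nu\rho}$ and $e_\mu e_\nu g_{\rho\sigma}$; these functions are smooth on the open set of $2$-jets for which $(g_{\mu\nu})$ is Lorentzian and nondegenerate. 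Granting this, $L_{0,g+\tilde g}-L_{0,g}$ is handled by a first-order Taylor expansion in $\tilde g$.

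To see the structure claim, note that the 0-frame has \emph{constant} structure functions — $[e_0,e_j]=e_j$ and $[e_j,e_k]=0$ — so the Koszul formula expresses the connection coefficients of $g$ in this frame as a universal function, polynomial up to the matrix inversion $(g_{\mu\nu})\mapsto(g^{\mu\nu})$, of the $g_{\mu\nu}$ and the $e_\mu g_{\nu\rho}$; one more $e_\mu$-derivative gives the curvature, hence $\sR_g$. Since $\Box_g$, $\delta_g$, $\delta_g^*$, $\sfG_g$ are built from $g$, the inverse metric, and covariant differentiation — each covariant derivative being an $e_\mu$-derivative plus a connection-coefficient correction — formula~\eqref{Eq0ELinRic} exhibits $L_{0,g}$ as a finite sum of terms $a_P(g_{\mu\nu},g^{\mu\nu},e_\mu g_{\nu\rho},e_\mu e_\nu g_{\rho\sigma})\cdot P$, where $P\in\Diff_0^2(M;S^2\,{}^0 T^*M)$ is a fixed product of at most two of $e_0,\dots,e_n$ (acting componentwise in the 0-frame) and $a_P$ is a fixed $\End(S^2\,{}^0 T^*M)$-valued function, smooth wherever $(g_{\mu\nu})$ is nondegenerate. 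This is the general-metric version of the computation in the proof of Lemma~\ref{Lemma0EOps}, which goes through verbatim with $h=h(\tau,x)$ now $\tau$-dependent, producing only additional smooth terms of positive order in $\tau$.

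Now let $\tilde g\in\cA^\alpha(M;S^2\,{}^0 T^*M)$. Its frame components satisfy $\tilde g_{\mu\nu}\in\cA^\alpha(M)$, and since $\cA^\alpha(M)$ is a module over the algebra $\cA^0(M)\supset\CI(M)$ and is preserved by $\Diffb(M)$ (immediate from the definition), also $e_\mu\tilde g_{\nu\rho},\,e_\mu e_\nu\tilde g_{\rho\sigma}\in\cA^\alpha(M)$. Writing $g_s:=g+s\tilde g$ — which is a nondegenerate Lorentzian 0-metric for all $s\in[0,1]$ wherever $g+\tilde g$ is, in particular near $\pa M$ since $\tilde g=\cO(\tau^\alpha)$ there — the fundamental theorem of calculus gives $a_P(g+\tilde g,\dots)-a_P(g,\dots)=\int_0^1\pa_s\,a_P(g_s,\dots)\,ds$, and the chain rule turns the integrand into a sum of (partial derivatives of $a_P$, evaluated at the $2$-jet of $g_s$) times one of the factors $\tilde g_{\mu\nu}$, $e_\mu\tilde g_{\nu\rho}$, $e_\mu e_\nu\tilde g_{\rho\sigma}$. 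The partial derivatives of $a_P$ are again universal smooth functions of $g_s$ and its $e_\mu$-derivatives up to order $2$, all of which lie in $\cA^0(M)$ (as $g$ is smooth and $s\tilde g\in\cA^\alpha\subset\cA^0$); composing with smooth functions and integrating over $s\in[0,1]$ therefore keeps us in $\cA^0(M)$. Hence every coefficient of $L_{0,g+\tilde g}-L_{0,g}$ is a product of an $\cA^0$- and an $\cA^\alpha$-function, so lies in $\cA^\alpha(M)$, and patching charts with a partition of unity gives $L_{0,g+\tilde g}-L_{0,g}\in\cA^\alpha\Diff_0^2(M;S^2\,{}^0 T^*M)$. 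The case $\tilde g\in\tau^m\CI$ is verbatim the same with $\cA^\alpha$ replaced by $\tau^m\CI(M)$, itself a $\CI(M)$-module preserved by $\Diffb(M)$: the coefficients then land in $\CI(M)\cdot\tau^m\CI(M)=\tau^m\CI(M)$, so $L_{0,g+\tilde g}-L_{0,g}\in\tau^m\Diff_0^2$.

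The only genuinely delicate point is the structure claim of the first two paragraphs: that the 0-frame coefficients of the geometric operator $L_{0,g}$ depend smoothly — not merely conormally, and with no $\tau$-weights beyond those already carried by the $e_j$'s — on the $2$-jet of the frame components of $g$. This rests on the constancy of the structure functions of $\{e_0,\dots,e_n\}$; without it, covariant differentiation in the 0-frame would produce $\tau$-singular contributions. Everything after that is routine bookkeeping with the module, composition, and differentiation properties of $\cA^\alpha$ and $\tau^m\CI$.
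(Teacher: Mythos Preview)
Your proof is correct and follows essentially the same approach as the paper's: both rest on the observation that the ingredients of $L_{0,g}$ (inverse metric, Levi-Civita connection, curvature, hence $\Box_g$, $\delta_g^*$, $\delta_g$, $\sfG_g$, $\sR_g$) have coefficients, in the $0$-calculus, which depend smoothly on the frame components of $g$ and their $0$-derivatives, so that perturbing $g$ by $\tilde g\in\cA^\alpha$ (resp.\ $\tau^m\CI$) perturbs the coefficients by elements of the same class via the module and closure properties of $\cA^\alpha$ (resp.\ $\tau^m\CI$). The paper carries this out operator by operator (first $(g+\tilde g)^{-1}-g^{-1}$, then $\sfG$, then $\nabla$, then $\delta^*$, etc.), whereas you package it once and for all via the constant structure functions of the $0$-frame and a single fundamental-theorem-of-calculus argument; your version is a bit more systematic and makes explicit \emph{why} no spurious $\tau$-weights appear, but the mathematical content is the same.
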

\begin{proof}
   Using that the space $\cA^\alpha$ is a $\CI(M)$-module which is closed under multiplication, we have $(g+\tilde g)^{-1}-g^{-1}\in\cA^\alpha(M;S^2\,{}^0 T M)$. Hence $\sfG_{g+\tilde g}-\sfG_g\in\cA^\alpha(M;\End(S^2\,{}^0 T^*M))$. Similarly, indicating the metric in the notation for the Levi-Civita connection by a superscript,
  \[
    \nabla^{g+\tilde g}-\nabla^g\in\cA^\alpha\Diff_0^1(M;{}^0 T^*M;{}^0 T^*M\otimes{}^0 T^*M).
  \]
  This implies that $\delta_{g+\tilde g}^*-\delta_g^*\in\cA^\alpha\Diff_0^1(M;{}^0T^*M;S^2\,{}^0 T^*M)$, similarly for the other operators appearing in~\eqref{Eq0ELinRic}. For the proof of the second part of the lemma, replace $\cA^\alpha$ by $\tau^m\CI$.
\end{proof}

In particular, for $m\geq 2$, the indicial families $I(L_{0,g+\tilde g}[\tau^j],\lambda)$, $j=0,1$, are independent of $\tilde g$; likewise (suitably interpreted) for $\tilde g\in\cA^\alpha$, $\alpha>1$.

\begin{lemma}
\label{Lemma0EEin}
  With $P_0$ defined in~\eqref{Eq0EEinOp}, suppose $g\in\CI(M;S^2\,{}^0 T^*M)$. If $\alpha>0$, $\tilde g\in\cA^\alpha(M;S^2\,{}^0 T^*M)$, then $P_0(g+\tilde g)-P_0(g)-L_{0,g}\tilde g\in\cA^{2\alpha}(M;S^2\,{}^0 T^*M)$. Similarly, if $\tilde g\in\tau^m\C$, $m\in\N$, then $P_0(g+\tilde g)-P_0(g)-L_{0,g}\tilde g\in\tau^{2 m}\CI$.
\end{lemma}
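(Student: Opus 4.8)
The statement to prove is Lemma~\ref{Lemma0EEin}, the quadratic remainder estimate for $P_0$. Here is my plan.

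\medskip

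The plan is to exploit that $P_0(g) = 2(\Ric(g)-\Lambda g)$ is, schematically, a \emph{quadratic} expression in $g^{-1}$ and the first two derivatives of $g$: in local coordinates, $\Ric(g)_{\mu\nu}$ is a sum of terms of the form $g^{-1}\pa^2 g$ and $g^{-1}g^{-1}\pa g\,\pa g$, and $\Lambda g$ is linear in $g$. Writing $g_t := g + t\tilde g$ and $F(t) := P_0(g_t)$, Taylor's theorem with integral remainder gives
\[
  P_0(g+\tilde g) - P_0(g) - L_{0,g}\tilde g = F(1) - F(0) - F'(0) = \int_0^1 (1-t)\, F''(t)\,d t .
\]
So the whole lemma reduces to controlling $F''(t) = D^2_{g_t}P_0(\tilde g,\tilde g)$, the second variation of $P_0$, uniformly for $t\in[0,1]$. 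The key structural fact is that $F''(t)$ is a sum of terms each of which is \emph{bilinear} in $\tilde g$ (and its up-to-second 0-derivatives $P\tilde g$, $P\in\Diff_0^2$), with coefficients that are smooth functions of $g_t$, $g_t^{-1}$, and the 0-derivatives of $g_t$ of order $\le 2$. Since $g$ is a smooth 0-metric and $\tilde g$ is a (small, in the conormal or $\tau^m\CI$ sense) perturbation, $g_t$ remains a 0-metric for all $t\in[0,1]$ (on a possibly smaller collar, but the statement is local in $\tau$), $g_t^{-1}$ exists and lies in the same class as $g^{-1}$ plus an error in $\cA^\alpha$ (resp.\ $\tau^m\CI$), exactly as in the proof of Lemma~\ref{Lemma0EEinLinPert}; and $\Diff_0$ applied to $\tilde g$ preserves $\cA^\alpha$ (resp.\ $\tau^m\CI$).

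\medskip

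First I would record the precise bilinear structure: differentiating the coordinate formula for $\Ric$ twice in $t$ produces three types of terms — (i) $g_t^{-1}g_t^{-1}$ times $\pa^2\tilde g\cdot$(lower) — which does not actually occur at second order since $\pa^2 g$ enters $\Ric$ linearly, so $D^2$ kills it; more carefully, the terms surviving in $F''$ are: $(g_t^{-1})''$-type contractions against $\pa^2 g_t$ and against $\pa g_t\,\pa g_t$ (where $(g_t^{-1})'' = 2 g_t^{-1}\tilde g\, g_t^{-1}\tilde g\, g_t^{-1}$ schematically, manifestly bilinear in $\tilde g$ with smooth coefficients in $g_t^{-1}$), $(g_t^{-1})'$ paired with $\pa^2\tilde g$, and $g_t^{-1}g_t^{-1}$ paired with $\pa\tilde g\,\pa\tilde g$. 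The $\Lambda g_t$ term contributes nothing to $F''$. Passing to the 0-framing of Lemma~\ref{Lemma0EOps}, ordinary coordinate derivatives $\pa$ get replaced by 0-derivatives $\tau^{-1}\cdot\tau\pa$, and all the $\tau$ powers conspire so that $P_0$ maps into $\tau^2\CI(M;S^2\,{}^0T^*M)$ on smooth 0-metrics (as already observed after Lemma~\ref{Lemma0EOps}); the bilinear remainder inherits the product of the weights of its two $\tilde g$-factors. Concretely: each factor $\tilde g\in\cA^\alpha$ contributes a weight $\tau^\alpha$, and $P\tilde g$ for $P\in\Diff_0^2$ stays in $\cA^\alpha$, so a bilinear expression in these with smooth (hence $\cA^0$, i.e.\ bounded-conormal) coefficients lies in $\cA^{2\alpha}$ by the module/multiplicativity property of $\cA^\bullet$ used in Lemma~\ref{Lemma0EEinLinPert}. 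The integral over $t\in[0,1]$ preserves the class since the bound is uniform in $t$. This gives $P_0(g+\tilde g)-P_0(g)-L_{0,g}\tilde g\in\cA^{2\alpha}$.

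\medskip

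For the second assertion ($\tilde g\in\tau^m\CI$, $m\in\N$), the argument is identical with $\cA^\alpha$ replaced by $\tau^m\CI$: here $g_t=g+t\tilde g$ is a \emph{smooth} 0-metric for each $t$, $g_t^{-1}$ is smooth, every 0-derivative of $\tilde g$ stays in $\tau^m\CI$, and a bilinear expression in two factors from $\tau^m\CI$ with smooth coefficients lies in $\tau^{2m}\CI$; the $t$-integral of a smoothly $t$-dependent family in $\tau^{2m}\CI$ is again in $\tau^{2m}\CI$. Hence $P_0(g+\tilde g)-P_0(g)-L_{0,g}\tilde g\in\tau^{2m}\CI$, as claimed. (A minor remark: the excerpt writes $\tilde g\in\tau^m\C$, evidently a typo for $\tau^m\CI$; I would silently read it that way.)

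\medskip

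The main obstacle is purely bookkeeping rather than conceptual: one must make sure that \emph{no} term in the second variation $F''(t)$ contains more than two factors of $\tilde g$ or its derivatives that are \enquote{unpaired with a weight} — i.e.\ that there is genuinely a clean bilinear-in-$\tilde g$ structure with the remaining dependence on $g_t$ being smooth-coefficient — and that the $\tau$-weight accounting in the 0-framing gives exactly $2\alpha$ (resp.\ $2m$) and not something worse like $2\alpha-2$. The cleanest way to see this without a long computation is to avoid coordinate Christoffel-symbol juggling altogether and instead argue abstractly: $D^2_{g}P_0$ is a \emph{bilinear} differential operator $\CI(M;S^2\,{}^0T^*M)^{\otimes 2}\to\CI(M;S^2\,{}^0T^*M)$ of order $(2,0)$ (or $(1,1)$) in its two arguments, with coefficients depending smoothly on $g$ through finitely many 0-jets; this is a standard fact about the Einstein operator (e.g.\ it underlies the expression \eqref{Eq0ELinRic} for $L_{0,g}$, which is itself built from $g$-jets, together with the fact that $g\mapsto L_{0,g}$ has smooth dependence). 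Granting this, the entire proof is: Taylor-with-remainder, plus the observation that $\cA^\alpha$ and $\tau^m\CI$ are closed under multiplication and under $\Diff_0$, exactly mirroring Lemma~\ref{Lemma0EEinLinPert}.
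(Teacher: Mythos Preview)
Your proof is correct and follows essentially the same approach as the paper: the paper's two-sentence argument simply observes that $P_0(g+\tilde g)-P_0(g)-L_{0,g}\tilde g$ contains only terms at least quadratic in $\tilde g$ (with coefficients built from finitely many $0$-jets of $g_t$), hence inherits weight $2\alpha$ (resp.\ $2m$); your Taylor-with-integral-remainder formula $\int_0^1(1-t)\,D^2_{g_t}P_0(\tilde g,\tilde g)\,dt$ is precisely the standard way to make that sentence rigorous. The only additional care you take---checking that $g_t$ remains a nondegenerate $0$-metric for $t\in[0,1]$ near $\tau=0$, and that $\Diff_0$ preserves $\cA^\alpha$ and $\tau^m\CI$---is implicit in the paper's appeal to Lemma~\ref{Lemma0EEinLinPert}.
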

\begin{proof}
  This follows similarly to the proof of Lemma~\ref{Lemma0EEinLinPert}. Since $P_0(g)$ and $L_{0,g}\tilde g$ capture all terms of $P_0(g+\tilde g)$ which are at most linear in $\tilde g$, the difference $P_0(g+\tilde g)-P_0(g)-L_{0,g}\tilde g$ only contains terms which are at least quadratic in $\tilde g$, hence its coefficients, as a 0-differential operator, have the stated decay.
\end{proof}

\section{Multi-Schwarzschild--de~Sitter spacetimes}
\label{SS}

In this section, we show how to glue several Schwarzschild--de~Sitter metrics into global de~Sitter space; we shall work near the future conformal boundary, hence on
\begin{equation}
\label{EqSMfd}
  M = [0,1)_\tau \times \Sph^3,\quad \tau=\cos s.
\end{equation}
The de~Sitter metric is of the form discussed in Lemmas~\ref{Lemma0EEinLinPert}--\ref{Lemma0EEin}. Indeed, we have
\begin{equation}
\label{EqSBdyMet}
  g_\dS\in 3\Lambda^{-1}\tau^{-2}(-d\tau^2+h)+\tau^2\CI,\qquad
  h=3\Lambda^{-1}g_{\Sph^3},
\end{equation}

We recall the Schwarzschild--de~Sitter (SdS) metric with mass $\bhm\in\R$:
\begin{equation}
\label{EqSMetric}
  g_\bhm = -\Bigl(\frac{\Lambda r^2}{3}-1+\frac{2\bhm}{r}\Bigr)^{-1}d r^2 + \Bigl(\frac{\Lambda r^2}{3}-1+\frac{2\bhm}{r}\Bigr)d t^2 + r^2 g_{\Sph^2}.
\end{equation}
We consider the metric~\eqref{EqSMetric} for $r>r_+$, where $r_+$ is the largest positive real root of $\Lambda r^2/3-1+2\bhm/r$ if one exists; otherwise, fix an arbitrary $r_+>0$. As in~\eqref{Eq0BdSStatic3}, we put $\tau_s=r^{-1}$, and thus $g_\bhm$ is a smooth 0-metric on
\begin{equation}
\label{EqSCoords}
  M_{\bhm,s} := [0,r_+^{-1})_{\tau_s} \times \R_t \times \Sph^2_\omega.
\end{equation}
Comparison with the de~Sitter metric, expressed in the same coordinates and on the manifold $M_s$ (see~\eqref{Eq0BdSStatic3Mfd}) by~\eqref{Eq0BdSStatic2} and \eqref{Eq0BdSStatic3} (thus $g_\dS=g_\bhm|_{\bhm=0}$), shows that
\begin{equation}
\label{EqSODiff}
  g_\bhm - g_\dS \in \tau_s^3\CI(M_s\cap M_{\bhm,s}; S^2\,{}^0 T^*M_s)
\end{equation}
in their common domain of definition. Note that at $\tau_s=0$, we have, in the upper half space coordinates~\eqref{Eq0BdSStatic}, $\tilde\tau=0$ and $\tilde R=e^{-t\sqrt{3/\Lambda}}$. In particular, $t\to\infty$ corresponds to $\tilde R\to 0$; let $p_0=(1,0,0,0)\in\Sph^3\subset\R^4$ denote the point $\tilde R=0$ inside $\tilde\tau=0$. Moreover, $t\to-\infty$ corresponds to $\tilde R\to\infty$, which on global de~Sitter space corresponds to the antipodal point $-p_0\in\Sph^3$ inside $\pa M$ by inspection of~\eqref{Eq0BdSNoncpt}.

In summary, by relating the coordinates in~\eqref{EqSCoords} to the semi-global de~Sitter manifold~\eqref{EqSMfd}, $g_\bhm$ can be regarded as gluing an SdS black hole into de~Sitter space at the point $p_0$ at the future conformal boundary $\tau=0$. Given a point $p\in\Sph^3$, choose a rotation $R\in SO(4)$ with $R p=p_0$; this induces a map $(\tau,\psi)\mapsto(\tau,R(\psi))$ on $M$. Pulling back $g_\bhm$ along this map, we obtain the metric
\begin{equation}
\label{EqSParamMetric}
  g_{p,\bhm},\quad p\in\Sph^3,\ \bhm\in\R,
\end{equation}
with $g_{p,\bhm}$ defined in a neighborhood of $U_p=\Sph^3\setminus\{p,-p\}$. See Figure~\ref{FigISdS}.

\begin{definition}
\label{DefSBalance}
  Let $N\in\N$. We say that $\{(p_i,\bhm_i)\colon i=1,\ldots,N\}\subset\Sph^3\times\R$ is \emph{balanced} if the $p_i$ are pairwise distinct and if, regarding $\Sph^3$ as the unit sphere inside $\R^4$, the following relation holds:
  \[
    \sum_{i=1}^N \bhm_i p_i = 0 \in \R^4.
  \]
\end{definition}

We can now state our main theorem:

\begin{thm}
\label{ThmSGlue}
  Let $N\in\N$, and suppose $\{(p_1,\bhm_1),\ldots,(p_N,\bhm_N)\}\subset\Sph^3\times\R$ is balanced. Suppose $V_{p_i}\subset U_{p_i}$ is a ball around $p_i$ with the point $p_i$ removed, and suppose $\ol{V_{p_i}}\cap\ol{V_{p_j}}=\emptyset$ for $i\neq j$. Then there exist a neighborhood $U$ of $\pa M\setminus\{p_1,\ldots,p_N\}$ and a Lorentzian 0-metric $g\in\CI(U;S^2\,{}^0 T_U^*M)$ with the following properties:
  \begin{enumerate}
  \item\label{ItSGlueEin} $g$ satisfies the Einstein vacuum equation $\Ric(g)-\Lambda g=0$;
  \item\label{ItSGlueSdS} near $V_{p_i}$, we have $g=g_{p_i,\bhm_i}$;
  \item\label{ItSGluePert} $g$ is $\cO(\tau^3)$-close to the de~Sitter metric: $g-g_\dS\in\tau^3\CI(U;S^2\,{}^0 T_U^*M)$.
  \end{enumerate}
\end{thm}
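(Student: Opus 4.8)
The plan is to proceed in three stages: (i) produce a naive glued metric $g_0$ via a partition of unity, (ii) correct it to a much more precise approximate solution $g^0$ that already solves the Einstein equation to infinite order at $\pa M$ (this is where the balance condition enters), and (iii) solve away the remaining rapidly-decaying error by a nonlinear argument using $g^0$ as a background metric in a wave-map gauge, followed by the Bianchi/gauge-propagation argument to conclude that the gauge-fixed solution solves the actual Einstein vacuum equation.

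For stage (i), I would pick cutoffs $\chi_i\in\CI(M)$ supported near $\ol{V_{p_i}}$ and equal to $1$ on a smaller neighborhood, and set $g_0 = g_\dS + \sum_i \chi_i(g_{p_i,\bhm_i}-g_\dS)$. By \eqref{EqSODiff} each summand lies in $\tau_s^3\CI$ near $\pa M$, hence $g_0-g_\dS\in\tau^3\CI(M;S^2\,{}^0 T^*M)$; in particular $g_0$ is a Lorentzian 0-metric near $\pa M\setminus\{p_1,\dots,p_N\}$ and already agrees with $g_{p_i,\bhm_i}$ near $V_{p_i}$ and with $g_\dS$ away from all the $\ol{V_{p_i}}$. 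Using Lemma \ref{Lemma0EEin} with $\tilde g = g_0-g_\dS\in\tau^3\CI$, together with the fact (from Lemma \ref{Lemma0EOps} and the remark after its proof) that $P_0(g_\dS)\in\tau^2\CI$ — and in fact computing $L_{0,g_\dS}\tilde g$ via Corollary \ref{Cor0EInd}, noting that the leading $\tau^3$-coefficient of $\tilde g$ lies in a summand on which $I(L_{0,g_\dS},3)$ may or may not vanish — one finds $P_0(g_0)$ has leading behavior at order $\tau^4$. Crucially, this $\tau^4$-coefficient is, up to identifications, of the form $\delta_h k$ where $k$ is the trace-free tangential part of the $\tau^3$-coefficient of $\tilde g$ (built from the SdS data). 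Since the $\tau^3$ indicial kernel of $L_{0,g_\dS}$ (the $\dot g_{T T 0}$ block, where $\lambda(\lambda-3)$ vanishes at $\lambda=3$) cannot be used to cancel a $\tau^3$ error but can be adjusted freely, and since the relevant $\tau^3$-to-$\tau^4$ map $I(L_{0,g_\dS}[\tau],3)$ restricted to that block is (a multiple of) $\delta_{h,0}^*$, I would solve the divergence equation $\delta_h k' = f$, $\tr_h k'=0$ with $f$ the obstruction, $k'$ supported away from the $p_i$. This is \eqref{EqIDiv}: solvability requires $f\perp$ conformal Killing fields of $(\Sph^3,g_{\Sph^3})$, which by the explicit form of the SdS contribution reduces exactly to $\sum_i\bhm_i p_i=0$, i.e.\ the balance condition (this computation is carried out in \S\ref{SsSO}); the localized solvability is then Delay's theorem \cite{DelayCompact}. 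Setting $g^0 = g_0 + \tau^3\cdot(\text{the correction built from }k')$ gives $P_0(g^0)\in\tau^5\CI$, say, and iterating this indicial-operator argument order by order (solving at each step an equation of type \eqref{EqIDiv}, whose obstruction now automatically vanishes since the relevant data are supported away from the $p_i$) produces, via Borel summation, a metric — still called $g^0$ — with $g^0-g_\dS\in\tau^3\CI$, $g^0=g_{p_i,\bhm_i}$ near $V_{p_i}$, and $P_0(g^0)\in\CIdot(M;S^2\,{}^0 T^*M)$ vanishing to infinite order at $\pa M$.

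For stage (iii), I would introduce the generalized harmonic (wave-map) gauge relative to the background $g^0$, i.e.\ replace $P_0$ by the gauge-fixed operator (Definition \ref{DefSG}) obtained by adding $2\delta_{g,g^0}^*\Upsilon(g;g^0)$, whose linearization at $g^0$ is a quasilinear principally-scalar-wave 0-operator to which the Taylor-series analysis of \cite{VasyWaveOndS} applies: one first solves the gauge-fixed equation in Taylor series at $\pa M$ — this works because $g^0$ already solves it to infinite order, so the formal solution is just $g^0$ itself — and then writes $g=g^0+\dot g$ with $\dot g$ solving a quasilinear wave equation with forcing in $\CIdot$. Because the forcing vanishes to infinite order at the (spacelike) boundary $\pa M$, standard energy estimates for the wave equation on the asymptotically de Sitter 0-metric give a solution $\dot g\in\CIdot$ in a neighborhood $U$ of $\pa M\setminus\{p_1,\dots,p_N\}$ (shrinking $U$ if necessary), and hence $g-g_\dS\in\tau^3\CI(U;S^2\,{}^0 T^*M)$, giving \eqref{ItSGluePert}; since $\dot g$ vanishes to infinite order and $g^0=g_{p_i,\bhm_i}$ near $V_{p_i}$ where the SdS metrics already satisfy the gauge condition, one gets \eqref{ItSGlueSdS}. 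Finally, to obtain \eqref{ItSGlueEin}, I would apply the second Bianchi identity: the gauge 1-form $\Upsilon(g;g^0)$ satisfies a homogeneous wave equation, and it vanishes to infinite order at $\pa M$ (because $P_0(g)$ and the gauge-fixed equation are both satisfied modulo $\CIdot$, and the gauge term is then controlled); uniqueness for this wave equation with infinite-order-vanishing data at the spacelike boundary forces $\Upsilon(g;g^0)\equiv 0$, so $g$ solves $\Ric(g)-\Lambda g=0$.

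The main obstacle is stage (ii): establishing that the first obstruction is governed precisely by $\delta_h$ acting on the trace-free tangential part, identifying that obstruction's pairing against conformal Killing fields with $\sum\bhm_i p_i$, and — more delicately — handling the interaction between the nontrivial $\tau^3$-indicial-kernel block ($\dot g_{T T 0}$, where $I(L_{0,g},3)$ degenerates, possibly forcing $\log\tau$ terms via \eqref{Eq0BIndLog}) and the higher-order iteration, so that the correction can always be taken supported away from the $p_i$ and the formal series closes. The required solvability of the localized divergence equation \eqref{EqIDiv} is exactly Delay's result, but verifying at each order that the cokernel obstruction vanishes (using that after the first step the data are supported in the region where $g^0$ is already de Sitter, hence the obstruction, being an integral against conformal Killing fields, vanishes) is the technical heart of the argument; this is carried out in \S\ref{SsSG}--\ref{SsSO}.
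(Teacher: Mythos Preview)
Your overall architecture (naive gluing $\to$ leading-order correction via a divergence equation $\to$ gauge-fixed formal solution $\to$ nonlinear backward solve $\to$ Bianchi/gauge-propagation) matches the paper. The real discrepancy, and the place where your argument has a gap, is in stage~(ii).

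The paper does \emph{not} iterate with the ungauged operator $L_0=D_{g_\dS}P_0$. It solves the divergence equation exactly once (Proposition~\ref{PropSO}) to push the error from $\tau^4$ to $\tau^5$, then immediately switches to the \emph{gauge-fixed} operator $P(\,\cdot\,;g^0)$ for the formal series (Proposition~\ref{PropSGFormal}). The point of this switch is that the indicial family of the gauge-fixed linearization $L_{g_\dS,g_\dS}$ is diagonal (equation~\eqref{EqSGIndOp}) with all indicial roots below $5$ (equation~\eqref{EqSGIndRoots}); hence $I(L_{g_\dS,g_\dS},m)$ is invertible for every $m\geq 5$ and the Borel-summation step is completely unobstructed.

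Your proposal instead iterates with $L_0$ and claims to solve an equation of type~\eqref{EqIDiv} at every order. This fails as written. From Corollary~\ref{Cor0EInd}, the $NT$ row of $I(L_0,\lambda)$ is identically zero for all $\lambda$, so $I(L_0,m)$ is never surjective: a nonzero $NT$ error at level $\tau^m$ cannot be removed by a $\tau^m$ correction. The trick you used at the first step---a $\tau^3$ correction in the $TT0$ block of $\ker I(L_0,3)$, mapped by $I(L_0[\tau],3)$ to the $NT$ slot via $-3\delta_h$---is unavailable at higher orders, because for $m-1\geq 4$ the $TT0$ block is \emph{not} in $\ker I(L_0,m-1)$ (the eigenvalue $(m-1)(m-4)\neq 0$), and one checks from the $I(L_0[\tau],\lambda)$ matrix that the remaining kernel directions ($NT$ and the one-dimensional $(NN,TT1)$ kernel) map to zero in the $NT$ output. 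So there is no divergence equation to solve at steps $m\geq 5$. Your justification (``the data are supported in the region where $g^0$ is already de~Sitter'') is also not right: the corrections and errors are supported in the transition region $\bigcup_i\supp d\chi_i$, not where the metric is de~Sitter, and support away from the $p_i$ does not kill pairings with global conformal Killing fields.

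The ungauged route \emph{can} be made to close, but only by invoking the contracted Bianchi identity $\delta_g\sfG_g P_0(g)\equiv 0$ at each step: its indicial content forces the $NT$ component of the $\tau^m$ error to vanish for $m\neq 4$ and forces the $(NN,TT1)$ components into the one-dimensional range of $I(L_0,m)$. You do not mention this mechanism. The paper's approach sidesteps the whole issue by gauge-fixing before iterating, which is both simpler and what you should do.
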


See Figure~\ref{FigSGlue}. In the case of small subextremal masses, we can say more about the domain of existence of $g$; we discuss this at the end of~\S\ref{SsSC}.

\begin{figure}[!ht]
\centering
\includegraphics{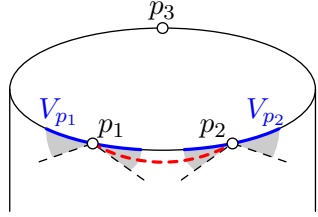}
\caption{Illustration of Theorem~\ref{ThmSGlue}, focusing on a neighborhood of $p_1,p_2$; the shaded regions indicate where we glue in the SdS metrics $g_{p_i,\bhm_i}$, $i=1,2$. The blue segments indicate the sets $V_{p_i}$. The red dashed line indicates a piece of the past boundary of the domain $U$ on which we construct the metric $g$. The difference to Figure~\ref{FigIBaby} is that here we do not require the masses to be subextremal, hence we content ourselves with gluing the far end of the cosmological region of several SdS black holes into de~Sitter space.}
\label{FigSGlue}
\end{figure}

\begin{rmk}
\label{RmkSBH}
  We only explicitly describe here how to glue a piece of the cosmological region of an SdS black hole into de~Sitter space. As is well-known (see e.g.\ \cite[\S3.1]{HintzVasyKdSStability}), the metric $g_\bhm$ in~\eqref{EqSMetric} merely has a coordinate singularity at the \emph{cosmological horizon} $r=r_+$ if the mass is subextremal, meaning $0<9\Lambda\bhm^2<1$. After a suitable (singular) coordinate change, $g_\bhm$ is analytic. There is another coordinate singularity at the event horizon, located at the second largest root of $\Lambda r^2/3-1+2\bhm/r$, beyond which the metric can again be extended analytically. Thus, one can paste these extended subextremal SdS metrics into neighborhoods of $p_i$ and thus, via Theorem~\ref{ThmSGlue}, glue subextremal SdS metrics, extended as far as one wishes, into de~Sitter space. This is depicted in Figure~\ref{FigIBaby}.
\end{rmk}

We show the necessity of the balance assumption under certain decay assumptions on $g$:

\begin{thm}
\label{ThmSUniq}
  Let $(p_1,\bhm_1),\ldots,(p_N,\bhm_N)\in\Sph^3\times\R$, with the $p_i$ pairwise distinct. Suppose $g$ satisfies the conclusions~\eqref{ItSGlueEin}--\eqref{ItSGlueSdS} of Theorem~\usref{ThmSGlue}. If, for some $\eps>0$, we have $g-g_\dS\in\tau^3(\log\tau)\CI+\tau^3\CI+\cA^{3+\eps}(M;S^2\,{}^0 T^*M)$, then $\{(p_1,\bhm_1),\ldots,(p_N,\bhm_N)\}$ is balanced.
\end{thm}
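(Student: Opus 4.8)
The plan is to read off from $g$ a transverse‑traceless tensor $k$ on $\pa M$ which is singular at the points $p_i$, and then to integrate by parts against conformal Killing vector fields of $(\Sph^3,g_{\Sph^3})$. Write $\dot g:=g-g_\dS$; by hypothesis $\dot g=\tau^3(\log\tau)\,a+\tau^3 b+r$ with $a,b$ smooth and $r\in\cA^{3+\eps}$ on the domain $U$ of $g$, and the boundary traces $v:=a|_{\pa M}$, $u:=b|_{\pa M}$ are well defined. In the splitting~\eqref{Eq0ESplitTr} with $h:=3\Lambda^{-1}g_{\Sph^3}$ I denote components by $(\cdot)_{NN},(\cdot)_{NT},(\cdot)_{TT1},(\cdot)_{TT0}$ and set $k:=u_{TT0}\in\CI(\pa M\setminus\{p_1,\dots,p_N\};\ker\tr_h)$. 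Since $g$ and $g_\dS$ both solve the Einstein equation, $P_0(g)=P_0(g_\dS)=0$, and since $\dot g\in\bigcap_{\sigma>0}\cA^{3-\sigma}$ (the logarithm only costs an arbitrarily small power), Lemma~\ref{Lemma0EEin} gives $L_{0,g_\dS}\dot g\in\bigcap_{\sigma>0}\cA^{6-\sigma}$; in particular $L_{0,g_\dS}\dot g=\cO(\tau^{6-\sigma})$ vanishes faster than $\tau^5$, so all of its ``low‑order coefficients'' vanish.

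The key step is to show $k$ is transverse‑traceless on $\pa M\setminus\{p_i\}$: it is trace‑free by construction, and I claim $\delta_h k=0$. One matches the $\tau^3\log\tau$, $\tau^3$, and $\tau^4$ coefficients of $L_{0,g_\dS}\dot g$ to zero using Corollary~\ref{Cor0EInd} and the logarithm formula~\eqref{Eq0BIndLog}. Matching $\tau^3\log\tau$ forces $v\in\ker I(L_{0,g_\dS},3)$; matching $\tau^3$, i.e.\ $I(L_{0,g_\dS},3)u+\pa_\lambda I(L_{0,g_\dS},\cdot)|_3\,v=0$, then yields $v_{TT0}=0$ (from the $TT0$ row) and $u_{NN}-3u_{TT1}=v_{TT1}$ (from the $TT1$ row together with $v\in\ker I(L_{0,g_\dS},3)$). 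The decisive point is that the $NT$ row of $I(L_{0,g_\dS},\cdot)$ — equivalently of the $\tau$‑dilation‑invariant part of $L_{0,g_\dS}$ — vanishes identically, so the $NT$ component of $L_{0,g_\dS}\dot g$ is automatically one order better than $\dot g$: its $\tau^4$ coefficient is governed only by the subleading indicial operator $I(L_{0,g_\dS}[\tau],3)$ acting on $u$ and $v$, with the remainder contributing $[L_{0,g_\dS}r]_{NT}\in\cA^{4+\eps}=o(\tau^4)$. Setting this coefficient to zero gives (using Corollary~\ref{Cor0EInd} and $v_{TT0}=0$) the identity $2 d_X u_{NN}-6 d_X u_{TT1}-3\delta_h k-2 d_X v_{TT1}=0$, which upon inserting $u_{NN}-3u_{TT1}=v_{TT1}$ collapses to $\delta_h k=0$. (Equivalently, the asymptotic data of $g$ at $\pa M$ satisfies the conformal constraints~\eqref{EqIConstr2}.)

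Next I would compute the local model of $k$ near each $p_i$ and conclude. Since $g=g_{p_i,\bhm_i}$ on the punctured ball $V_{p_i}$, which is disjoint from $-p_i$, there $g$ is conformally smooth down to $\pa M$ away from $p_i$, with only the Schwarzschild‑type singularity at $p_i$; pulling back the explicit metric~\eqref{EqSMetric} as in the definition of $g_{p,\bhm}$ and using the upper half‑space coordinates of~\eqref{Eq0BdSNoncpt} adapted to $p_i$ (in which $\tau$ is equivalent to $\tilde\tau$, though $\tau_s$ is not), a direct computation shows that near $p_i$, $k=c\,\bhm_i\,|x|^{-3}\bigl(3\hat x\otimes\hat x-g_\eucl\bigr)+o(|x|^{-2})$ in geodesic normal coordinates $x$ at $p_i$, with $c\neq 0$ universal (this is the unique $SO(3)$‑invariant transverse‑traceless model, which is why spherical symmetry of SdS pins it down). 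For $a\in\R^4$ let $\zeta_a$ be the conformal Killing field on $(\Sph^3,g_{\Sph^3})$, the $g_{\Sph^3}$‑gradient of $x\mapsto\la x,a\ra|_{\Sph^3}$, so $\delta_{h,0}^*\zeta_a=0$. For small $\rho>0$, Stokes' theorem on $\pa M\setminus\bigcup_i B_\rho(p_i)$ together with $\delta_h k=0$, $\tr_h k=0$, $\delta_{h,0}^*\zeta_a=0$ gives
\[
  0=\int_{\pa M\setminus\bigcup_i B_\rho(p_i)}\bigl(\la k,\delta_h^*\zeta_a\ra-\la\delta_h k,\zeta_a\ra\bigr)\,d\vol_h=\sum_{i=1}^N\int_{\pa B_\rho(p_i)}k(\zeta_a,\nu_i)\,dS_h,
\]
with $\nu_i$ the unit normal pointing towards $p_i$. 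Near $p_i$ one has $\zeta_a=(\text{constant vector field})-\la p_i,a\ra\,(x\cdot\pa_x)+\cO(|x|^2)$; the constant part pairs with the leading singularity of $k$ to an integral that vanishes by oddness, the $o(|x|^{-2})$ part of $k$ and the quadratic part of $\zeta_a$ contribute $o(1)$, so as $\rho\to 0$ only the pairing of $c\,\bhm_i|x|^{-3}(3\hat x\otimes\hat x-g_\eucl)$ with $-\la p_i,a\ra(x\cdot\pa_x)$ survives, giving $\lim_{\rho\to0}\int_{\pa B_\rho(p_i)}k(\zeta_a,\nu_i)\,dS_h=C\,\bhm_i\la p_i,a\ra$ for a nonzero universal constant $C$. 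Hence $C\,\la\sum_i\bhm_i p_i,\,a\ra=0$ for all $a\in\R^4$, i.e.\ $\sum_i\bhm_i p_i=0$.

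I expect the main obstacle to be the second paragraph — establishing $\delta_h k=0$. One must organize the order‑by‑order expansion of $L_{0,g_\dS}\dot g$ carefully in the presence of the logarithmic term, keep track of exactly which components of which indicial and subleading‑indicial operators enter, and verify that the weak decay $\cA^{3+\eps}$ of the conormal remainder is enough; this last point works precisely because the $NT$ row of the indicial family of $L_{0,g_\dS}$ vanishes, so $[L_{0,g_\dS}r]_{NT}$ gains a power of $\tau$ for free. The third paragraph is then largely explicit computation, but the fact that the residue equals $\la p_i,a\ra$ rather than $0$ hinges on the interplay between the point‑mass tensor and the conformal geometry of $\Sph^3$ — specifically, that the linearization of $\zeta_a$ at $p_i$ is $-\la p_i,a\ra$ times the dilation field.
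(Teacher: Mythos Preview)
Your proof is correct, and the indicial-family bookkeeping in your second paragraph is exactly right; in particular the crucial point that the $NT$ row of $I(L_{0,g_\dS},\lambda)$ vanishes, so that $(L_{0,g_\dS}r)_{NT}\in\cA^{4+\eps}$ for $r\in\cA^{3+\eps}$, is what makes the $\tau^4$ matching go through with only a conormal remainder.

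Your route differs from the paper's in how the obstruction is localized. The paper subtracts off the naive gluing $\sum_i\chi_i(g_{p_i,\bhm_i}-g_\dS)$ first, so that the residual $\tau^3$ coefficient $(g_{30})_{TT0}$ is \emph{smooth} on all of $\pa M$ and satisfies $\Lambda\,\delta_h(g_{30})_{TT0}=\Err_{NT}$, with $\Err_{NT}$ supported on $\supp d\chi_i$ and already computed in Lemma~\ref{LemmaSO2}; integrating against $V\in\mathfrak{conf}_3$ then kills the left side by a single integration by parts on the closed manifold $\Sph^3$, and Lemma~\ref{LemmaSO2} converts the right side directly into the balance condition. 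You instead keep the full $k=u_{TT0}$, which is transverse--traceless but singular at the $p_i$, and read off the obstruction as a residue at each $p_i$ via Stokes on $\Sph^3\setminus\bigcup_i B_\rho(p_i)$. The two are equivalent: your $k$ equals the paper's $(g_{30})_{TT0}$ plus the $TT0$ part of the $\tau^3$ coefficient of $g_{p_i,\bhm_i}-g_\dS$ near each $p_i$, and the divergence of the latter is (up to a constant) $\Err_{NT}$ by the computation in~\eqref{EqSO1Calc}. The paper's version has the advantage of recycling Lemma~\ref{LemmaSO2} wholesale, so no new local computation near the $p_i$ is needed; your version is more intrinsic (it is precisely the statement that the asymptotic datum $k$ satisfies the conformal constraint $\delta_h k=0$ of~\eqref{EqIConstr2}, with the masses appearing as point sources), at the cost of having to identify the model singularity and evaluate the sphere integrals. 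Both are clean; the paper's is shorter given what has already been set up.
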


In particular, this applies for metrics $g$ satisfying part~\eqref{ItSGluePert} of Theorem~\ref{ThmSGlue}. The uniqueness theorem is not sharp; the inclusion of a $\tau^3\log\tau$ term merely serves as a demonstration that the inclusion of a logarithmic term does not help in constructing a (formal) solution of $P_0(g)=0$ when the balance condition is violated. The determination of \emph{sharp} conditions under which the balance condition of Definition~\ref{DefSBalance} is necessary for the existence of a metric $g$ satisfying~\eqref{ItSGlueEin}--\eqref{ItSGlueSdS} is left as an open problem. We remark that the analysis of the Einstein vacuum equation for metrics $g$ with $g-g_\dS\in\tau^2\CI$ or $\cA^2$ (or even less decay) is necessarily \emph{nonlinear} on the level of $\cO(\tau^4)$ contributions to $\Ric(g)-\Lambda g$.

Let $\chi_i\in\CI(\pa M)$ denote cutoffs, identically $1$ near $\ol{V_{p_i}}$, and with mutually disjoint supports; put $\chi_0:=1-\sum_{i=1}^N\chi_i$. The starting point of the proof of Theorem~\ref{ThmSGlue} is the naively glued metric
\begin{equation}
\label{EqSOGlued}
  g_0 := \chi_0 g_\dS + \sum_{i=1}^N \chi_i g_{p_i,\bhm_i}.
\end{equation}
Away from the points $p_i$, we have $g_0-g_\dS\in\tau^3\CI$. We shall show in~\S\ref{SsSO} that the failure $P_0(g_0)=2(\Ric(g_0)-\Lambda g_0)$ of $g_0$ to solve the Einstein vacuum equation lies in $\tau^4\CI$ and is supported away from the $p_i$, but it is always nonzero except in the trivial case that $\bhm_i=0$ for all $i$. The goal is to find a correction $\tilde g\in\tau^3\CI$, with support disjoint from $\ol{V_{p_i}}$, such that $P_0(g_0+\tilde g)=0$. To accomplish this, we proceed in several steps:

\begin{enumerate}
\item We improve the error to $P_0(g_0+\tilde g_0)\in\tau^5\CI$ by solving an underdetermined divergence equation for $\tilde g_0$; the balance condition ensures the solvability, while the underdetermined nature of the equation enables us to choose $\tilde g_0$ to vanish identically near the $p_i$. See~\S\ref{SsSO}.
\item We find $g$ in the wave map gauge with background metric $g^0:=g_0+\tilde g_0$ by solving a suitable gauge-fixed Einstein equation $P(g)=0$. This is done in two steps.
  \begin{enumerate}
  \item One can construct $g_1=g^0+\tilde g_1$, $\tilde g_1\in\tau^5\CI$, with $P(g_1)\in\CIdot(M)$ by inverting the indicial family of $D_{g_\dS}P$ and using a Borel summation argument. See~\S\ref{SsSG}.
  \item In order to solve away the final error, we solve the quasilinear wave equation $P(g_1+\tilde g_2)=0$ backwards from $\pa M$, with solution $\tilde g_1\in\CIdot(M)$. See~\S\ref{SsSC}.
  \end{enumerate}
\item Also in~\S\ref{SsSC}, we show that $g$ solves the Einstein vacuum equation by means of the usual argument involving the second Bianchi identity and a unique continuation argument at $\pa M$.
\end{enumerate}

With $P_0$ as in Definition~\ref{Def0EEinOp}, we shall write from now on
\begin{equation}
\label{EqSLinOp}
  L_0 := L_{g_\dS,0} = D_{g_\dS}P_0.
\end{equation}

\subsection{Obstructed problem for the leading order correction}
\label{SsSO}

We will prove:

\begin{prop}
\label{PropSO}
  Under the assumptions of Theorem~\usref{ThmSGlue}, and with $g_0$ defined by~\eqref{EqSOGlued}, there exists $\tilde g_0\in\tau^3\CI(M;S^2\,{}^0 T^*M)$, vanishing near $\bigcup_i\ol{V_{p_i}}$, so that $P_0(g_0+\tilde g_0)\in\tau^5\CI$.
\end{prop}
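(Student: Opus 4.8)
The plan is to reduce Proposition~\ref{PropSO} to a linear equation on $\pa M=\Sph^3$, to split that equation into a purely algebraic part and an underdetermined divergence equation, and to identify the only genuine obstruction --- the cokernel of the divergence operator on trace-free symmetric $2$-tensors --- with the balance condition.

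First I would linearize. By Lemma~\ref{Lemma0EEin} applied with base metric $g_\dS$ and $m=3$, together with $P_0(g_\dS)=0$, one has $P_0(g_0+\tilde g_0)-L_0(g_0+\tilde g_0-g_\dS)\in\tau^6\CI\subset\tau^5\CI$ for every $\tilde g_0\in\tau^3\CI(M;S^2\,{}^0T^*M)$; so it suffices to find $\tilde g_0\in\tau^3\CI$, vanishing near $\bigcup_i\ol{V_{p_i}}$, with $L_0(\phi+\tilde g_0)\in\tau^5\CI$, where $\phi:=g_0-g_\dS=\sum_i\chi_i f_i$ and $f_i:=g_{p_i,\bhm_i}-g_\dS\in\tau^3\CI$ on the domain of $g_{p_i,\bhm_i}$. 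Since each $g_{p_i,\bhm_i}$ solves~\eqref{EqIEin}, Lemma~\ref{Lemma0EEin} gives $L_0 f_i\in\tau^6\CI$ there, so its $\tau^3$-coefficient $f_i^{(3)}$ lies in $\ker I(L_0,3)$; as $I(L_0,3)$ is a bundle endomorphism (Corollary~\ref{Cor0EInd}), so does $\phi^{(3)}:=\sum_i\chi_i f_i^{(3)}$. Writing $L_0\phi=\sum_i\chi_i L_0 f_i+\sum_i[L_0,\chi_i]f_i$ and noting $[L_0,\chi_i]\in\tau\Diff_0^1$ is supported in $\{d\chi_i\neq 0\}$, we get $E:=L_0\phi\in\tau^4\CI$ with $E^{(4)}:=\tau^{-4}E|_{\pa M}$ smooth and supported away from $\bigcup_i\ol{V_{p_i}}$ and from the $p_i$. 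Looking for $\tilde g_0=\tau^3 v^{(3)}+\tau^4 v^{(4)}$ with $v^{(j)}\in\CI(M;S^2\,{}^0T^*M)$ that are $\tau$-independent near $\pa M$ and with $v^{(3)}|_{\pa M}\in\ker I(L_0,3)$ (so the $\tau^3$-term of $L_0\tilde g_0$ vanishes), the requirement $L_0(\phi+\tilde g_0)\in\tau^5\CI$ becomes the equation on $\pa M$
\[
  I(L_0[\tau],3)\bigl(v^{(3)}|_{\pa M}\bigr)+I(L_0,4)\bigl(v^{(4)}|_{\pa M}\bigr)=-E^{(4)} .
\]

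Next I would extract the solvability conditions from Corollary~\ref{Cor0EInd}. In the splitting~\eqref{Eq0ESplitTr} one has $\ker I(L_0,3)=\{\dot g_{N N}=3\dot g_{T T1}\}$, on which $I(L_0[\tau],3)$ contributes to the $\dot g_{N T}$-slot only the term $-\Lambda\delta_h\dot g_{T T0}$; and $I(L_0,4)$ contributes nothing to the $\dot g_{N T}$-slot while surjecting onto the other three slots subject only to $\dot g_{N N}=3\dot g_{T T1}$. Hence, taking $v^{(3)}|_{\pa M}=(0,0,0,v^{(3)}_{T T0})$ and $v^{(4)}|_{\pa M}$ chosen algebraically, the displayed equation is solvable exactly when (i) $E^{(4)}_{N N}=3 E^{(4)}_{T T1}$ and (ii) $E^{(4)}_{N T}$ lies in the range of $\delta_h\colon\CI(\Sph^3;\ker\tr_h)\to\CI(\Sph^3;T^*\Sph^3)$, i.e.\ $E^{(4)}_{N T}\perp\mathrm{CKV}(\Sph^3)$ in $L^2$, the cokernel of this underdetermined operator being the conformal Killing fields of the round sphere (its $L^2$-adjoint is $\delta_{h,0}^*$). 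Condition (i) is automatic from the contracted second Bianchi identity $\delta_{g_0}\sfG_{g_0}P_0(g_0)=0$: since $g_0-g_\dS\in\tau^3\CI$, Lemma~\ref{Lemma0EOps} shows the $\tau^4$-coefficient of $\delta_{g_\dS}\sfG_{g_\dS}E$ vanishes, and its normal component is a nonzero multiple of $E^{(4)}_{N N}-3 E^{(4)}_{T T1}$.

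The heart of the argument, and the step I expect to be the main obstacle, is condition (ii). Reading off the $\dot g_{N T}$-row of $I(L_0[\tau],3)$ gives $E^{(4)}_{N T}=-\Lambda\,\delta_h\phi^{(3)}_{T T0}$ with $\phi^{(3)}_{T T0}=\sum_i\chi_i k_i$, where $k_i$ is the trace-free tangential-tangential coefficient of $f_i$; from~\eqref{EqSMetric} one computes that $k_i=\bhm_i$ times an explicit tensor which is smooth and $\delta_h$-closed on $\Sph^3\setminus\{p_i\}$ but conormally singular at $p_i$ (so $\delta_h\phi^{(3)}_{T T0}$, hence $E^{(4)}_{N T}$, is still smooth). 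For $Y\in\mathrm{CKV}(\Sph^3)$, integrating by parts on $\Sph^3$ with small balls $B_\epsilon(p_i)$ excised and using $\delta_{h,0}^*Y=0$ yields
\[
  \langle E^{(4)}_{N T},Y\rangle_{L^2(\Sph^3)}=-\Lambda\sum_i\Phi_i(Y),\qquad
  \Phi_i(Y):=\lim_{\epsilon\to 0}\oint_{\pa B_\epsilon(p_i)}\langle\iota_\nu k_i,Y\rangle .
\]
Each $\Phi_i$ is a linear functional on $\mathrm{CKV}(\Sph^3)$ that is invariant under the stabilizer $SO(3)\subset SO(4)$ of $p_i$ (by spherical symmetry of $g_{p_i,\bhm_i}$ about the axis through $\pm p_i$) and linear in $\bhm_i$; decomposing $\mathrm{CKV}(\Sph^3)=\{d\ell:\ell\in(\R^4)^*\}\oplus\mathfrak{so}(4)$ into $SO(3)$-representations forces $\Phi_i$ to vanish on $\mathfrak{so}(4)$ and on all $\ell$ with $\ell(p_i)=0$, hence $\Phi_i(d\ell)=c_0\bhm_i\ell(p_i)$ for a universal constant $c_0$ that an explicit single-black-hole computation shows is nonzero. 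Thus $E^{(4)}_{N T}\perp\mathrm{CKV}(\Sph^3)$ iff $\ell\bigl(\sum_i\bhm_i p_i\bigr)=0$ for all $\ell$, i.e.\ iff $\{(p_i,\bhm_i)\}$ is balanced. Given the balance hypothesis, (i)--(ii) hold; since $\delta_h k_i=0$ near $p_i$, the form $\delta_h\phi^{(3)}_{T T0}$ is supported in $\bigcup_i\{d\chi_i\neq 0\}$, an open set disjoint from $\bigcup_i\ol{V_{p_i}}$, so by Delay's theorem~\cite{DelayCompact} on the underdetermined divergence equation there is $v^{(3)}_{T T0}\in\CI(\Sph^3;\ker\tr_h)$ with $\delta_h v^{(3)}_{T T0}=\delta_h\phi^{(3)}_{T T0}$ and support in an arbitrarily small neighborhood of that set; solving the remaining algebraic slots for $v^{(4)}|_{\pa M}$ using (i) and extending $v^{(3)},v^{(4)}$ to be $\tau$-independent near $\pa M$ and supported away from $\bigcup_i\ol{V_{p_i}}$, the tensor $\tilde g_0:=\tau^3 v^{(3)}+\tau^4 v^{(4)}$ has all the required properties.
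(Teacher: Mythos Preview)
Your argument is sound and reaches the same conclusion as the paper, but is organized differently in two respects. First, the paper computes explicitly (Lemma~\ref{LemmaSO1}) that in the Schwarzschild--de~Sitter case the $\tau^4$-error is \emph{purely} normal-tangential, so no $\tau^4$ correction $v^{(4)}$ is ever needed; you instead allow a general error shape and invoke the second Bianchi identity to derive the constraint $E^{(4)}_{NN}=3E^{(4)}_{TT1}$ required for $I(L_0,4)$-solvability of the remaining slots. Second, for the obstruction, the paper integrates the explicit expression $(\Err_0)_{NT}=\tfrac{12\bhm}{\Lambda}\,d\tilde\chi$ against conformal Killing fields directly; you instead write $E^{(4)}_{NT}=-\Lambda\,\delta_h\phi^{(3)}_{TT0}$, apply Stokes' theorem on $\Sph^3$ with small balls around the $p_i$ excised, and use the $SO(3)$-stabilizer representation on $\mathfrak{conf}_3$ to pin down the boundary flux. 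Your route is more conceptual and anticipates the Kerr--de~Sitter case (where the error genuinely has nonzero $NN$, $TT1$, $TT0$ components, cf.\ Lemma~\ref{LemmaKO1}); the paper's is more hands-on.

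There is one real gap in your invocation of Delay's theorem. You assert that $v^{(3)}_{TT0}$ can be supported in an arbitrarily small neighborhood of $\bigcup_i\{d\chi_i\neq 0\}$; but that set is typically disconnected, and on a disconnected domain $\Omega$ the hypothesis of Theorem~\ref{ThmSODelay} requires orthogonality to the conformal Killing fields of \emph{each} connected component separately---a condition strictly stronger than the balance condition, which only gives orthogonality to global conformal Killing fields on $\Sph^3$. The fix, which the paper makes explicit, is to take $\Omega$ \emph{connected}, containing $\operatorname{supp}E^{(4)}_{NT}$ and disjoint from $\bigcup_i\ol{V_{p_i}}$; then every conformal Killing field on $\Omega$ is the restriction of one on $\Sph^3$ (since $\dim\ker\delta_{h,0}^*|_{\Omega}\le\binom{5}{2}=\dim\mathfrak{conf}_3$ and restriction from $\Sph^3$ to $\Omega$ is injective), and the balance condition suffices.
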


We begin by computing the error produced by naively gluing a single SdS black hole into a neighborhood of $p_0\in\pa M$:

\begin{lemma}
\label{LemmaSO1}
  In the coordinates~\eqref{EqSMetric}, let $\chi\in\CI(\R_t)$ be identically $1$ for large $t$, and put $g_0=\chi(t) g_\bhm+(1-\chi(t))g_\dS$. With $P_0=2(\Ric-\Lambda)$ as in~\eqref{Eq0EEinOp}, we then have
  \[
    P_0(g_0) \equiv \tau_s^4\Err_{s,0} \bmod \tau_s^5\CI,\quad
    \Err_{s,0}=2\frac{d\tau_s}{\tau_s} \otimes_s \frac{12\bhm}{\Lambda}\frac{d\chi}{\tau_s}.
  \]
\end{lemma}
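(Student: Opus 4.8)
The plan is to compute $P_0(g_0)$ directly in the static-type coordinates $(\tau_s,t,\omega)$ on $M_{\bhm,s}$, exploiting that away from $\supp d\chi$ the metric $g_0$ equals either $g_\bhm$ or $g_\dS$, both of which are exact solutions of $P_0=0$. Hence $P_0(g_0)$ is supported in the region where $\chi$ transitions, and there $g_0 = g_\dS + \chi(t)(g_\bhm-g_\dS)$. Set $\dot g := g_\bhm - g_\dS$; by~\eqref{EqSODiff} we have $\dot g \in \tau_s^3\CI(M_s\cap M_{\bhm,s};S^2\,{}^0 T^*M_s)$, and in fact one reads off from~\eqref{Eq0BdSStatic2} and~\eqref{EqSMetric} that, modulo $\tau_s^4\CI$,
\[
  \dot g \equiv 2\bhm\Bigl(\frac{3}{\Lambda}\Bigr)^2\tau_s\,(d t^2 + \tau_s^{-2}\,d\tau_s^2) = 2\bhm\Bigl(\frac{3}{\Lambda}\Bigr)^2\tau_s^{-2}\bigl(\tau_s^3\,d t^2 + \tau_s\,d\tau_s^2\bigr),
\]
so that in the 0-cotangent frame $d\tau_s/\tau_s$, $\tau_s^{-1}d t$, $\tau_s^{-1}d\omega$ the tensor $\dot g$ has its leading ($\tau_s^3$) part in the normal-normal and $tt$ slots only. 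Since $g_\bhm$ solves $P_0 = 0$, we have $P_0(g_\dS + \dot g) = 0$, and subtracting gives $P_0(g_0) = P_0(g_\dS + \chi\dot g) - P_0(g_\dS+\dot g)$.

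First I would expand both terms using Lemma~\ref{Lemma0EEin}: writing $P_0(g_\dS + \tilde g) = P_0(g_\dS) + L_0\tilde g + Q(\tilde g)$ where $Q(\tilde g)\in\cA^{2\alpha}$ (with $\tilde g\in\cA^\alpha$, $\alpha = 3$) collects the at-least-quadratic terms, the quadratic remainders $Q(\chi\dot g)$ and $Q(\dot g)$ lie in $\tau_s^6\CI$, hence are negligible modulo $\tau_s^5\CI$. Likewise $P_0(g_\dS)\in\tau_s^2\CI$ is a fixed tensor that cancels in the difference. Therefore, modulo $\tau_s^5\CI$,
\[
  P_0(g_0) \equiv L_0(\chi\dot g) - L_0(\dot g) = L_0\bigl((\chi-1)\dot g\bigr).
\]
Now $(\chi-1)\dot g$ is supported where $d\chi\neq 0$ together with the region $\chi < 1$, but there we may still use that $L_0 = \Box_g - 2\delta_g^*\delta_g\sfG_g + 2\sR_g - 2\Lambda$ is a second-order $0$-differential operator; applying it to $(\chi-1)\dot g$ with $\dot g$ of leading size $\tau_s^3$ produces a tensor of leading size $\tau_s^3$ \emph{unless} the $\tau_s^3$-coefficient of $(\chi-1)\dot g$ lies in the kernel of the indicial operator $I(L_0,3)$. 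By Corollary~\ref{Cor0EInd} (with $\lambda = 3$ and $h = 3\Lambda^{-1}g_{\Sph^2}$-type metric adapted to the static slicing, i.e.\ $h_s$ from~\eqref{Eq0BdSStaticBdy}), the matrix $I(L_0,3)$ annihilates exactly the $tt$-pure-trace block and the trace-free tangential block while being invertible on the normal-normal$\oplus$$tt$-trace pair up to the explicit $2\times2$ sub-block; one checks that the leading coefficient of $\dot g$ — which sits in the normal-normal and $tt$ slots in precisely the combination $(\dot g_{NN},\dot g_{TT1})\propto(1,1)$ after accounting for the frame — is killed by the $(3\lambda-6,\,-\lambda(3\lambda-6))$ and $(6-\lambda,\,-\lambda(6-\lambda))$ rows at $\lambda=3$ (both rows are proportional to $(1,-\lambda)=(1,-3)$, which does annihilate this combination). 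Hence $L_0((\chi-1)\dot g) = \cO(\tau_s^4)$, and the $\tau_s^4$-term comes entirely from the subleading indicial operator $I(L_0[\tau_s],3)$ acting on the leading coefficient — equivalently, from one $\tau_s\pa_{\tau_s}$ or $\tau_s\pa_t$ derivative falling on the \emph{coefficients} $\chi(t)$ or on the $\tau_s$-powers. Only the $t$-derivative hitting $\chi$ survives (the others are killed, as they reproduce the indicial action), and tracking it through the $2\,e^0\otimes_s\tau_s^{-1}d t$ block of $L_0$ — specifically the $I(L_0[\tau],\lambda)$ entry $2 d_X$ / $(4-2\lambda)\delta_{h,0}^*$ structure from Corollary~\ref{Cor0EInd}, restricted to the $t$-direction — produces a normal-tangential term $2\,\frac{d\tau_s}{\tau_s}\otimes_s(\text{const}\cdot\chi')$. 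I would then pin down the constant $\frac{12\bhm}{\Lambda}$ by a direct but short coefficient computation.

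\textbf{The main obstacle} will be bookkeeping the frame conversions and the precise numerical constant: the tensor $\dot g$ is naturally written in the coordinate frame $(dt, dr, d\omega)$ with $r = \tau_s^{-1}$, and one must carefully pass to the $0$-cotangent frame $(d\tau_s/\tau_s,\tau_s^{-1}dt,\tau_s^{-1}d\omega)$ and to the trace-decomposed splitting~\eqref{Eq0ESplitTr} relative to the boundary metric $h_s$ of~\eqref{Eq0BdSStaticBdy}, keeping track of all powers of $\Lambda/3$. Getting the coefficient exactly $\frac{12\bhm}{\Lambda}$ (rather than off by a factor of $\Lambda/3$, $3/\Lambda$, or $2$) requires expanding $\mu_\bhm(r)^{\pm1} = (\Lambda r^2/3 - 1 + 2\bhm/r)^{\pm1}$ to order $r^{-3} = \tau_s^3$ and feeding the result through the $tt$- and $NN$-entries of $3\Lambda^{-1}I(L_{0,g}[\tau],\lambda)$ in Corollary~\ref{Cor0EInd} at $\lambda = 3$. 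The conceptual content — that the error is forced to be $\cO(\tau_s^4)$, supported on $\supp\chi'$, and lands in the normal-tangential slot because it is generated by a single $t$-derivative on the cutoff — is exactly what makes the eventual correction step (Proposition~\ref{PropSO}, using the underdetermined divergence equation~\eqref{EqIDiv}) go through, so I would organize the computation to make that structural statement transparent before extracting the constant.
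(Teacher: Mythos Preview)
Your approach is the paper's approach: linearize around $g_\dS$ using Lemma~\ref{Lemma0EEin}, use $L_0(g_\bhm-g_\dS)\in\tau_s^6\CI$ (since $g_\bhm$ is exact), and reduce to a commutator with $\chi$. The paper phrases the last step as
\[
  P_0(g_0)\equiv\tau_s^4\bigl(I(L_0[\tau_s],3)(\chi\gamma_3)+I(L_0,4)(\chi\gamma_4)\bigr)
  =\tau_s^4\,[I(L_0[\tau_s],3),\chi]\gamma_3,
\]
the equality using $I(L_0[\tau_s],3)\gamma_3+I(L_0,4)\gamma_4=0$ (the $\tau_s^4$ coefficient of $L_0\dot g\in\tau_s^6\CI$) together with the fact that $I(L_0,4)$ is zeroth order and commutes with $\chi$. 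This is precisely the step your sentence ``the others are killed, as they reproduce the indicial action'' gestures at; you should state it explicitly, since otherwise the $I(L_0,4)\gamma_4$ contribution is unaccounted for.

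There are, however, concrete numerical errors you should fix before extracting the constant. First, the leading term of $\dot g=g_\bhm-g_\dS$ is
\[
  \gamma_3=\frac{18\bhm}{\Lambda^2}\Bigl(\frac{d\tau_s}{\tau_s}\Bigr)^2+2\bhm\,\Bigl(\frac{dt}{\tau_s}\Bigr)^2,
\]
so the $dt^2$ coefficient does \emph{not} carry the factor $(3/\Lambda)^2$; in the splitting~\eqref{Eq0ESplitTr} relative to $h_s$ one gets $(\gamma_3)_{NN}=18\bhm/\Lambda^2$, $(\gamma_3)_{TT1}=6\bhm/\Lambda^2$, i.e.\ ratio $3:1$, not $1:1$. (This is exactly what $I(L_0,3)$ annihilates, since its nontrivial rows are proportional to $(1,0,-3,0)$ at $\lambda=3$.) Second, the $NT$-producing entry of $I(L_0[\tau],3)$ acting on the trace-free tangential slot is $-\lambda\delta_h=-3\delta_{h_s}$, not $(4-2\lambda)\delta_{h,0}^*$ (the latter sits in the $TT0$ row). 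In the commutator $[I(L_0[\tau_s],3),\chi]\gamma_3$, the $NN$ and $TT1$ contributions to the $NT$ row cancel exactly (they give $(2\cdot 18-6\cdot 6)\bhm\Lambda^{-2}d\chi=0$), and only $(\Lambda/3)[-3\delta_{h_s},\chi](\gamma_3)_{TT0}=(12\bhm/\Lambda)d\chi$ survives, which is the paper's computation.
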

\begin{proof}
  Since $g_\bhm$ and $g_\dS$ solve the Einstein equation, $P_0(g_0)$ is supported on $\{\chi\neq 0,1\}$. By Lemma~\ref{Lemma0EEin} and in view of~\eqref{EqSODiff}, we have
  \begin{equation}
  \label{EqSO1Nonlin}
    P_0(g_0) = P_0\bigl(g_\dS + \chi(g_\bhm-g_\dS)\bigr) \equiv L_0\bigl(\chi(g_\bhm-g_\dS)\bigr) \bmod \tau_s^6\CI;
  \end{equation}
  but for $\chi\equiv 1$, the left hand side vanishes, hence
  \begin{equation}
  \label{EqSO1Lin}
    L_0(g_\bhm-g_\dS)\in\tau_s^6\CI.
  \end{equation}
  Now, note that $\gamma:=g_\bhm-g_\dS=\tau_s^3\gamma_3+\tau_s^4\gamma_4+\cO(\tau_s^5)$, with $\gamma_3$ and $\gamma_4$ independent of $\tau_s$ when expressed in terms of the bundle splitting~\eqref{Eq0ESplitTr}, with $d\tau_s/\tau_s=-d r/r$, $\tau_s$, and $h_s=(\Lambda^2/9)d t^2+(\Lambda/3)g_{\Sph^2}$ (see~\eqref{Eq0BdSStaticBdy}) taking the roles of $e^0$, $\tau$, and $h$; explicitly,
  \[
    \gamma_3 = \frac{18\bhm}{\Lambda^2}\frac{d r^2}{r^2} + 2\bhm(r\,d t)^2 = \Bigl(\frac{18\bhm}{\Lambda^2}, 0, \frac{6\bhm}{\Lambda^2}, \frac{4\bhm}{3}d t^2-\frac{2\bhm}{\Lambda}g_{\Sph^2} \Bigr).
  \]
  In view of~\eqref{EqSO1Lin}, or by direct calculation using Corollary~\ref{Cor0EInd}, we have $I(L_0,3)\gamma_3=0$ and $I(L_0[\tau_s],3)\gamma_3+I(L_0,4)\gamma_4=0$; thus,~\eqref{EqSO1Nonlin} implies that, modulo $\tau_s^5\CI$,
  \begin{align}
    P_0(g_0) &\equiv \tau_s^4\bigl(I(L_0[\tau_s],3)(\chi\gamma_3) + I(L_0,4)(\chi\gamma_4)\bigr) \nonumber\\
      &= \tau_s^4 \bigl[ I(L_0[\tau_s],3),\chi \bigr] \gamma_3 \nonumber\\
   \label{EqSO1Calc}
      &= \tau_s^4(\Lambda/3)\bigl( 0, [-3\delta_{h_s},\chi](\gamma_3)_{T T 0}, 0, 0 \bigr) \\
      &= \tau_s^4 \Bigl(0, \frac{12\bhm}{\Lambda}d\chi, 0, 0 \Bigr) \nonumber
  \end{align}
  since $[\delta_{h_s},\chi]=-\iota_{\nabla^{h_s}\chi}$, $\iota$ denoting contraction, and $\nabla^{h_s}\chi=\chi'\nabla^{h_s}t=9\Lambda^{-2}\chi'(t)\pa_t$.
\end{proof}

Thus, $\Err_{s,0}=\cO(\tau_s^4)$ is a tangential-normal tensor. In order to proceed, let us pretend we want to glue a single SdS black hole into $M$. Since $I(L_0,4)_{N T}=0$ by Corollary~\ref{Cor0EInd}, we \emph{cannot} solve away $\Err_{s,0}$ with a $\cO(\tau_s^4)$ metric correction. Since $\Err_{s,0}=\cO(\tau_s^4)$ is due to the $\cO(\tau_s^3)$ difference of the metrics $g_\bhm$ and $g_\dS$, we shall instead attempt to solve away $\Err_{s,0}$ with a $\cO(\tau_s^3)$ correction \emph{with support not containing $p_0$}. To this end, note first that by Corollary~\ref{Cor0EInd},
\begin{equation}
\label{EqSInd3}
  \ker I(L_0,3) = \ker\tr_{h_s}{} \oplus \R\bigl(3(e^0)^2+h_s\bigr) \oplus \bigl(2 e^0\otimes_s\tau_s^{-1}T^*X\bigr).
\end{equation}
Written as a block matrix with respect to this splitting and~\eqref{Eq0ESplitTr}, we furthermore have
\begin{equation}
\label{EqSIndlot3}
  3\Lambda^{-1} I(L_0[\tau_s],3) = \begin{pmatrix} 0 & 0 & -4\delta_{h_s} \\ -3\delta_{h_s} & 0 & 0 \\ 0 & 0 & -\frac{4}{3}\delta_{h_s} \\ 0 & 0 & -2\delta_{h_s,0}^* \end{pmatrix} \colon \CI(M;\ker I(L_0,3))\to\CI(M;S^2\,{}^0 T^*M).
\end{equation}
Thus, we need to find $k\in\CI(\pa M;\ker\tr_{h_s})$ which vanishes near $p_0$ and which solves $-(\Lambda/3)3\delta_{h_s} k=(\Err_{s,0})_{N T}=12\bhm\Lambda^{-1}d\chi$. A necessary condition for solvability is that the right hand side be $L^2(\pa M;|d h_s|)$-orthogonal to the space $\ker\delta_{h_s,0}^*\in\CI(\pa M;T^*\pa M)$ of conformal Killing 1-forms. Identifying 1-forms with vector fields via the metric $h_s$, this condition reads
\begin{equation}
\label{EqSOIndSolvCrit}
  \int_{\pa M} V(\Err_{s,0})_{N T}\,|d h_s|=0\quad\text{for all conformal Killing vector fields $V$ on $(\pa M,h_s)$.}
\end{equation}

The space of conformal Killing vector fields only depends on the conformal class of the metric.\footnote{Indeed, if $(X,h)$ is a Riemannian manifold and $V$ is a conformal Killing vector fields, so $\cL_V h=f h$ for some $f\in\CI(X)$, then $\cL_V(e^{2\varphi}h)=e^{2\varphi}(f+2 V\varphi)h$ for any $\varphi\in\CI(X)$.} Note then that $h_s$ is conformal to $g_{\Sph^3}$; indeed, $h_s=\tau_s^2\tau^{-2}g_{\Sph^3}$. The conformal Killing vector fields of the standard $n$-sphere, $n\geq 3$, are well-known (see e.g.\ \cite[\S1.4]{SchottenloherCFT} and use the stereographic projection):

\begin{prop}
\label{PropSConfKillSph}
  The space $\mathfrak{conf}_n=\ker\delta_{g_{\Sph^n},0}^*\subset\cV(\Sph^n)$ is a direct sum
  \[
    \mathfrak{conf}_n = \mathfrak{iso}_n \oplus \mathfrak{scal}_n,
  \]
  where $\mathfrak{iso}_n=\ker\delta_{g_{\Sph^n}}^*\cong\mathfrak{so}_{n+1}$ is the space of Killing vector fields (rotations) on $\Sph^n$, and
  \begin{equation}
  \label{EqSConfKillSph}
    \mathfrak{scal}_n = \{ S_q \colon q\in\R^n \},\quad S_q \colon \Sph^n \ni p \mapsto q-\la q,p\ra p \in T_p\Sph^n,
  \end{equation}
  where $\la\cdot,\cdot\ra$ is the standard inner product on $\R^{n+1}$.
\end{prop}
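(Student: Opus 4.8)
The plan is to verify the inclusion $\mathfrak{iso}_n\oplus\mathfrak{scal}_n\subseteq\mathfrak{conf}_n$ by direct computation and then upgrade it to an equality by a dimension count. The inclusion $\mathfrak{iso}_n\subseteq\mathfrak{conf}_n$ is immediate since isometries are conformal, and $\mathfrak{iso}_n\cong\so_{n+1}$ is the standard fact that $\mathrm{Isom}(\Sph^n,g_{\Sph^n})=O(n+1)$, the Killing fields being exactly the restrictions to $\Sph^n$ of the rotation generators $x^i\pa_{x^j}-x^j\pa_{x^i}$ of $\R^{n+1}$. For $S_q$, where I take $q\in\R^{n+1}$, I would argue as follows: at $p\in\Sph^n$ the constant vector field $q$ on $\R^{n+1}$ splits into normal and tangential parts as $q=\la q,p\ra p+S_q(p)$, so that, writing $f_q\in\CI(\Sph^n)$ for the restriction of the linear functional $p\mapsto\la q,p\ra$, one has $S_q=\nabla^{g_{\Sph^n}}f_q$ and $\mathrm{Hess}_{g_{\Sph^n}}f_q=-f_q\,g_{\Sph^n}$ (a short computation via the Gauss formula, using $\nabla^{\eucl}q=0$; equivalently, this is the classical identity for the lowest nonconstant Laplace eigenfunctions on the round sphere). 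Hence $\cL_{S_q}g_{\Sph^n}=2\,\mathrm{Hess}_{g_{\Sph^n}}f_q=-2f_q\,g_{\Sph^n}$ is pure trace, i.e.\ $S_q\in\ker\delta^*_{g_{\Sph^n},0}=\mathfrak{conf}_n$.

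Next I would check that the sum is direct and compute its dimension. The map $q\mapsto S_q$ is injective, since $S_q\equiv0$ would force $q$ to be parallel to every $p\in\Sph^n$, hence $q=0$; thus $\dim\mathfrak{scal}_n=n+1$. Moreover $\mathfrak{iso}_n\cap\mathfrak{scal}_n=0$: a nonzero $S_q$ satisfies $\cL_{S_q}g_{\Sph^n}=-2f_q\,g_{\Sph^n}\neq0$ and so is not Killing. Therefore $\dim(\mathfrak{iso}_n\oplus\mathfrak{scal}_n)=\tfrac{n(n+1)}{2}+(n+1)=\tfrac{(n+1)(n+2)}{2}$.

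It remains only to show $\dim\mathfrak{conf}_n\leq\tfrac{(n+1)(n+2)}{2}$, whence the inclusion above becomes an equality; this is the sole nontrivial point and is where the hypothesis $n\geq3$ enters. Composing with stereographic projection $\Sph^n\setminus\{\mathrm{pt}\}\to\R^n$, which is a conformal diffeomorphism onto Euclidean space, identifies $\mathfrak{conf}_n$ with the space of those conformal Killing fields of $(\R^n,g_{\eucl})$ which extend smoothly across the puncture. By Liouville's theorem (valid precisely for $n\geq3$), the conformal Killing fields of Euclidean $\R^n$ form the $\tfrac{(n+1)(n+2)}{2}$-dimensional Lie algebra $\so(n+1,1)$, spanned by the translations, rotations, the dilation, and the special conformal fields; each of these extends across $\{\mathrm{pt}\}$ because $O(n+1,1)$ acts by diffeomorphisms on the conformal compactification $\Sph^n$ of $\R^n$. (For $n=2$ the corresponding space is infinite-dimensional, so the restriction $n\geq3$ cannot be dropped.) Alternatively one may simply quote that the conformal group of $\Sph^n$ is $O(n+1,1)$ \cite[\S1.4]{SchottenloherCFT} and read off the dimension of its Lie algebra. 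The verification of the two explicit families $\mathfrak{iso}_n$ and $\mathfrak{scal}_n$ and of directness is routine, so the main obstacle is this final dimension bound.
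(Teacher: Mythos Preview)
Your argument is correct, and it is essentially the approach the paper itself indicates: the paper does not give a proof of this proposition but states it as well-known, pointing to \cite[\S1.4]{SchottenloherCFT} together with the stereographic projection, which is precisely the route you take for the dimension bound. Your write-up also silently fixes the typo in the statement ($q\in\R^{n+1}$ rather than $\R^n$) and supplies the verification that $S_q$ is conformal Killing and that the sum is direct, details the paper omits; the dimension bound $\dim\mathfrak{conf}_n\leq\tfrac{(n+1)(n+2)}{2}$ is in fact used again later in the paper (proof of Proposition~\ref{PropSO}), so your emphasis on it is well placed.
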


Passing from $\tau_s$ to the global boundary defining function $\tau$, the error $\Err_0$ in $P_0(g_0)\equiv\tau^4\Err_0\bmod\tau^5\CI$ has normal-tangential component
\begin{equation}
\label{EqSOConf}
  (\Err_0)_{N T} = \tau^{-4} P_0(g_0)(\tau\pa_\tau,\tau\cdot)|_{T\Sph^3} = \tau^{-3}\tau_s^3(\Err_{s,0})_{N T} = \tau^{-3}\tau_s^3\frac{12\bhm}{\Lambda}d\chi.
\end{equation}

\begin{rmk}
\label{RmkSConfInv}
  Since $|d g_{\Sph^3}|=\tau^3\tau_s^{-3}|d h_s|$, the solvability condition~\eqref{EqSOIndSolvCrit} is equivalent to
  \[
    \int_{\pa M} V(\Err_0)_{N T} |d g_{\Sph^3}| = 0\quad\forall\,V\in\mathfrak{conf}_3.
  \]
  This has the same form as~\eqref{EqSOIndSolvCrit}; thus, the condition~\eqref{EqSOIndSolvCrit} is conformally invariant.
\end{rmk}

Now, at $\tau=0$, both $\tau_s/\tau$ and $\chi$ are functions of $t$ only, thus of $\tilde R=|\tilde x|$ by~\eqref{Eq0BdSStatic}, and thus of the geodesic distance $d_{\Sph^3}(p_0,-)$ from the point $p_0\in\Sph^3$ by~\eqref{Eq0BdSNoncpt}; thus, we have $\tau^{-3}\tau_s^3 d\chi=d\tilde\chi$ for some $\tilde\chi=\tilde\chi(d_{\Sph^3}(p_0,-))$. With $(\Err_0)_{N T}=\frac{12\bhm}{\Lambda}d\tilde\chi$, we can now compute
\begin{equation}
\label{EqSOIndSolvCrit2}
  \ell_{p_0,\bhm} \in (\mathfrak{conf}_3)^*,\quad \mathfrak{conf}_3 \ni V \mapsto \int_{\Sph^3} V(\Err_0)_{N T}|d g_{\Sph^3}|.
\end{equation}
Namely, for $V\in\mathfrak{iso}_n=\ker\delta^*_{g_{\Sph^3}}\subset\ker\delta_{g_{\Sph^3}}$, integration by parts gives $\ell_{p_0,\bhm}(V)=0$. On the other hand, if $q=p_0\in\Sph^3\subset\R^4$, the vector field $S_q$ is the radial vector field pointing towards $p_0$, and
\[
  \ell_{p_0,\bhm}(S_{p_0})=C_0\cdot 12\bhm/\Lambda,
\]
where $C_0\in\R$ is a universal constant. We claim that $C_0\neq 0$. Indeed, passing back to~\eqref{EqSOIndSolvCrit} note that $S_q$ is a radial vector field, i.e.\ a $\CI(\R_{\tilde R})$-multiple of $\pa_{\tilde R}$ and thus a $\CI(\R_t)$-multiple of $\pa_t$, and hence must be a constant nonzero multiple of $\pa_t$, which is the unique conformal Killing vector field of $h_s$ of this form. (In fact, $\pa_t$ is Killing for $h_s$.) But 
\[
  \int \pa_t\Bigl(\frac{12\bhm}{\Lambda}\chi'(t)d t\Bigr) |d h_s| = \frac{12\bhm}{\Lambda}\cdot(0-1)\cdot\frac{\Lambda^2}{9}\vol(\Sph^2) = -\frac{16\pi\Lambda\bhm}{3}
\]
is nonzero, proving that $C_0\neq 0$.

Finally, if $q\perp p_0$, then the integrand $S_q(\Err_0)_{N T}$ in~\eqref{EqSOIndSolvCrit2} is odd with respect to the reflection across the axis $\R p_0$, hence $\ell_{p_0,\bhm}(S_q)=0$ in this case. Therefore,
\begin{equation}
\label{EqSOIndSolvCrit3}
  \ell_{p_0,\bhm}(S_q) = C_0\frac{12\bhm}{\Lambda}\la p_0,q\ra,\quad q\in\R^4.
\end{equation}
In particular, there is a nontrivial obstruction to gluing a single nontrivial ($\bhm\neq 0$) SdS black hole into $M$. We summarize our findings in the following lemma:

\begin{lemma}
\label{LemmaSO2}
  Given $p_i\in\Sph^3,\bhm_i\in\R$, and cutoff functions $\chi_i$, identically $1$ near $p_i$ and vanishing near $-p_i$, for $i=1,\ldots,N$, set $\Err_{N T}:=\sum_{i=1}^N(\Err_{p_i,\bhm_i})_{N T}\in\CI(\Sph^3,T^*\Sph^3)$, where
  \[
    (\Err_{p_i,\bhm_i})_{N T}:= \tau^{-4}P_0\bigl(\chi_i g_{p_i,\bhm_i} + (1-\chi_i)g_\dS\bigr)(\tau\pa_\tau,\tau W)|_{\tau=0},\quad W\in T\Sph^3.
  \]
  Then we have
  \[
    \int_{\Sph^3} V(\Err_{N T}) |d g_{\Sph^3}| = 0\quad\forall\,V\in\mathfrak{conf}_3
  \]
  if and only if $\{(p_1,\bhm_1),\ldots,(p_N,\bhm_N)\}$ is balanced as in Definition~\usref{DefSBalance}.
\end{lemma}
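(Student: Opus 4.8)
The plan is to exploit linearity of the obstruction functional together with the $SO(4)$-covariance of the gluing construction, thereby reducing everything to the single-black-hole computation already carried out above (culminating in~\eqref{EqSOIndSolvCrit3}). Since $\Err_{N T}=\sum_{i=1}^N(\Err_{p_i,\bhm_i})_{N T}$ by definition, the functional $\ell\colon\mathfrak{conf}_3\ni V\mapsto\int_{\Sph^3}V(\Err_{N T})\,|d g_{\Sph^3}|$ equals $\sum_{i=1}^N\ell_{p_i,\bhm_i}$, where $\ell_{p_i,\bhm_i}(V):=\int_{\Sph^3}V(\Err_{p_i,\bhm_i})_{N T}\,|d g_{\Sph^3}|$. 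So it suffices to evaluate each $\ell_{p_i,\bhm_i}$ on the splitting $\mathfrak{conf}_3=\mathfrak{iso}_3\oplus\mathfrak{scal}_3$ of Proposition~\ref{PropSConfKillSph}.

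The main work is the covariance reduction. Fix $R_i\in SO(4)$ with $R_i p_i=p_0$ and let $\Phi_i\colon(\tau,\psi)\mapsto(\tau,R_i\psi)$ on $M$. By the definition~\eqref{EqSParamMetric} of $g_{p_i,\bhm_i}$ and the $SO(4)$-invariance of $g_\dS$ we have $\chi_i g_{p_i,\bhm_i}+(1-\chi_i)g_\dS=\Phi_i^*\bigl(\tilde\chi_i\,g_{p_0,\bhm_i}+(1-\tilde\chi_i)g_\dS\bigr)$ with $\tilde\chi_i:=\chi_i\circ R_i^{-1}$ again a cutoff identically $1$ near $p_0$. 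Since $P_0$ is natural under diffeomorphisms and $\Phi_i$ fixes $\tau$ (hence the vector field $\tau\pa_\tau$) while restricting to the isometry $R_i$ on $\Sph^3$, the splitting~\eqref{Eq0ESplitTr} is $\Phi_i$-equivariant and one finds $(\Err_{p_i,\bhm_i})_{N T}=R_i^*\bigl((\Err_{p_0,\bhm_i})_{N T}\bigr)$ as $1$-forms on $\Sph^3$, the right-hand error being formed with the cutoff $\tilde\chi_i$. As $R_i$ preserves $|d g_{\Sph^3}|$, changing variables gives $\ell_{p_i,\bhm_i}(V)=\ell_{p_0,\bhm_i}\bigl((R_i)_*V\bigr)$. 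Pushforward by $R_i$ preserves $\mathfrak{iso}_3$, and a direct check from~\eqref{EqSConfKillSph} using $R_i\in O(4)$ shows $(R_i)_*S_q=S_{R_i q}$. Together with the established vanishing of $\ell_{p_0,\bhm}$ on $\mathfrak{iso}_3$, the formula $\ell_{p_0,\bhm}(S_q)=C_0\tfrac{12\bhm}{\Lambda}\la p_0,q\ra$ with $C_0\neq0$ (valid for any such cutoff, cf.~\eqref{EqSOIndSolvCrit3}), and $R_i^{-1}p_0=p_i$, this yields $\ell_{p_i,\bhm_i}|_{\mathfrak{iso}_3}=0$ and $\ell_{p_i,\bhm_i}(S_q)=C_0\tfrac{12\bhm_i}{\Lambda}\la p_0,R_i q\ra=C_0\tfrac{12\bhm_i}{\Lambda}\la p_i,q\ra$.

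Summing over $i$, the functional $\ell$ vanishes on $\mathfrak{iso}_3$ automatically, while on $\mathfrak{scal}_3$ it is $S_q\mapsto C_0\tfrac{12}{\Lambda}\bigl\langle\sum_{i=1}^N\bhm_i p_i,\,q\bigr\rangle$. Hence $\ell$ vanishes on all of $\mathfrak{conf}_3$ iff this linear functional of $q\in\R^4$ vanishes identically, i.e.\ --- since $C_0\neq0$ --- iff $\sum_{i=1}^N\bhm_i p_i=0\in\R^4$; together with the standing assumption that the $p_i$ be pairwise distinct, this is precisely the balance condition of Definition~\usref{DefSBalance}. The only genuinely delicate point, beyond linearity and bookkeeping, is the covariance step in the second paragraph: checking that the normal--tangential component transforms as a $1$-form on $\Sph^3$ under $\Phi_i$ (which works because $\Phi_i$ fixes the $\tau$-direction) and that the pushforward of conformal Killing fields behaves as claimed; granting that, the lemma follows from~\eqref{EqSOIndSolvCrit3}.
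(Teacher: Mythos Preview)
Your covariance reduction to the single black hole at $p_0$ is essentially the same idea as the paper's proof, and the bookkeeping (pushforward of $S_q$, preservation of $\mathfrak{iso}_3$, equivariance of the normal--tangential splitting under $\Phi_i$) is correct.

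There is, however, a genuine gap. After transporting to $p_0$ you end up with the error term formed using the cutoff $\tilde\chi_i=\chi_i\circ R_i^{-1}$, which is a general cutoff near $p_0$, not a radial one. You then invoke the formula $\ell_{p_0,\bhm}(S_q)=C_0\tfrac{12\bhm}{\Lambda}\la p_0,q\ra$ as ``valid for any such cutoff,'' but~\eqref{EqSOIndSolvCrit3} and the discussion preceding it were derived only for a radial cutoff $\chi=\chi(t)$: the vanishing of $\ell_{p_0,\bhm}(S_q)$ for $q\perp p_0$ used an oddness argument under reflection across $\R p_0$, which fails for a non-radial cutoff. So the claim that the formula holds for arbitrary cutoffs is precisely what still needs proof.

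The paper closes this gap by a short direct argument: for two cutoffs $\chi_{i,1},\chi_{i,2}$ near the same $p_i$, the difference of the resulting error terms equals $-\Lambda\,\delta_h\bigl((\chi_{i,1}-\chi_{i,2})(\gamma_3)_{T T 0}\bigr)$, i.e.\ lies in the range of $\delta_h$ acting on smooth trace-free symmetric $2$-tensors supported away from $p_i$, and is therefore $L^2$-orthogonal to every conformal Killing field. Once you insert this cutoff-independence step (or, equivalently, first take the $\chi_i$ radial so that $\tilde\chi_i$ is radial too, and then run this argument to cover general $\chi_i$), your proof is complete and coincides with the paper's.
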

\begin{proof}
  If every $\chi_i$ is a radial cutoff, relative to the point $p_i$, the claim follows from~\eqref{EqSOIndSolvCrit3} and the fact that $\sum_{i=1}^N\la \bhm_i p_i,q\ra=0$ for all $q\in\R^4$ if and only if $\sum_{i=1}^N\bhm_i p_i=0$, which is precisely the balance condition.

  It remains to prove the lemma for general cutoffs. Observe that the difference of error terms produced by two different cutoffs $\chi_{i,j}$, $j=1,2$, to a neighborhood of the same point $p_i$ lies in the range of $\delta_h$ acting on smooth 1-forms supported away from $p_i$. Indeed, similarly to~\eqref{EqSO1Calc}, the difference is equal to $\bigl(0,-(\Lambda/3)3\delta_h\bigl((\chi_{i,1}-\chi_{i,2})(\gamma_3)_{T T 0}\bigr),0,0\bigr)$ (in the splitting~\eqref{Eq0ESplitTr}) where $(\gamma_3)_{T T 0}$ is the trace-free part of the tangential-tangential component (with respect to~\eqref{EqSMfd}) of $g_{p_i,\bhm_i}-g_\dS$; note that $\chi_{i,1}-\chi_{i,2}$ vanishes near $p_i$.
\end{proof}

Since $\delta_h$ acting on trace-free symmetric 2-tensors has surjective principal symbol, standard elliptic theory implies that under the balance condition, the error of Lemma~\ref{LemmaSO2} can be written as $\Err_{N T}=\delta_h k$ for some $k\in\CI(\Sph^3;S^2 T^*\Sph^3)$, $\tr_{g_{\Sph^3}}k=0$. Crucially, we can do much better, since the overdetermined nature of this equation allows us to find $k$ with strong support restrictions due to the following result due to Delay:

\begin{thm}
\label{ThmSODelay}
  (Delay \cite[Theorem~1.3, Proposition~9.7, Corollary~8.4]{DelayCompact}.) Let $(X,h)$ be a smooth Riemannian manifold, and let $\Omega\subset X$ be open. Suppose $f\in\CI(X;T^*X)$ satisfies $\supp f\Subset\Omega$ and $\int_\Omega V(f)|d h|=0$ for all $V\in\cV(\Omega)$ satisfying $\delta_{h,0}^*V^\flat=0$.\footnote{Note that if $\Omega$ has several connected components, the space of such $V$ is larger than the space of conformal Killing vector fields on $X$.} Then there exists $k\in\CI(X;S^2 T^*X)$ with $\tr_h k=0$ and $\supp k\subset\bar\Omega$ such that $\delta_h k=f$.
\end{thm}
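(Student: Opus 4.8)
The plan is to solve the underdetermined equation $\delta_h k=f$ by the variational/duality method for underdetermined elliptic operators of Corvino--Schoen \cite{CorvinoSchoenAsymptotics} and Chru\'sciel--Delay \cite{ChruscielDelaySimple}, carried out on $\Omega$ relative to a weight that degenerates rapidly at $\pa\Omega$. Write $\cP:=\delta_{h,0}^*$ for the conformal Killing operator on $1$-forms; it is overdetermined elliptic (injective principal symbol), its range consists of trace-free symmetric $2$-tensors (since $\tr_h\delta_{h,0}^*=0$), and its formal $L^2(|d h|)$-adjoint is $\delta_h$ restricted to trace-free symmetric $2$-tensors, which is therefore underdetermined elliptic. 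Fix a boundary defining function $\rho$ of $\pa\Omega$ in $\Omega$ (so $\rho>0$ on $\Omega$, vanishing simply at $\pa\Omega$) and a weight $w$, e.g.\ $w=\exp(-1/\rho)$, vanishing to infinite order at $\pa\Omega$; one then looks for $k$ in the form $k=w\,\cP u$ for a $1$-form $u$ on $\Omega$, so that $k$ is automatically trace-free and $\delta_h k=f$ becomes the second-order equation $\delta_h(w\,\cP u)=f$ on $\Omega$, which is the Euler--Lagrange equation of the functional $J(u)=\tfrac12\int_\Omega w\,|\cP u|_h^2\,|d h|-\int_\Omega\langle f,u\rangle_h\,|d h|$.

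Next I would set up the functional-analytic solvability. Let $\cK:=\{u : \cP u=0\}$ be the space of conformal Killing $1$-forms on $\Omega$; by overdetermined ellipticity of $\cP$ (and elliptic regularity) it is finite-dimensional and consists of smooth tensors, and under the musical isomorphism it is exactly the space $\{V^\flat:V\in\cV(\Omega),\ \delta_{h,0}^*V^\flat=0\}$ occurring in the orthogonality hypothesis on $f$. That hypothesis, $\int_\Omega V(f)\,|d h|=0$ for all such $V$, says precisely that the linear part of $J$ descends to the quotient of $1$-forms by $\cK$. The key analytic input is a weighted coercivity (Poincar\'e) estimate $\int_\Omega\tilde w\,|u|_h^2\le C\int_\Omega w\,|\cP u|_h^2$ for $u\perp\cK$, with $\tilde w$ comparable to $w$; granting this on the Hilbert space $\cH$ obtained by completing $\{u\perp\cK\}$ in the norm $(\int_\Omega w\,|\cP u|^2)^{1/2}$, the linear functional $u\mapsto\int\langle f,u\rangle$ is bounded on $\cH$ (because $\supp f$ is a compact subset of $\Omega^\circ$, on which $w$ is bounded below), so the Lax--Milgram/Riesz lemma yields a minimizer $u\in\cH$, and $k:=w\,\cP u$ is a weak solution of $\delta_h k=f$ in a weighted $L^2$ space on $\Omega$; since $\supp f$ is away from $\pa\Omega$, $f$ lies in every such space, so there is no obstruction from the data side. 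As the construction is carried out on $\Omega$ itself, disconnectedness of $\Omega$ only enlarges $\cK$, consistently with the footnote to the hypothesis.

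Finally I would upgrade $k$ to an element of $\CI(X;S^2 T^*X)$ supported in $\bar\Omega$. In $\Omega^\circ$ the equation $\delta_h(w\,\cP u)=f$ is a nondegenerate elliptic equation for $u$ with smooth coefficients and smooth right-hand side, so elliptic regularity gives $u\in\CI(\Omega^\circ)$ and hence $k=w\,\cP u\in\CI(\Omega^\circ)$, trace-free, with $\delta_h k=f$ there. Near $\pa\Omega$, where $f\equiv 0$, one runs weighted elliptic estimates for the degenerate equation $\delta_h(w\,\cP u)=0$ to bound $\cP u$ by a fixed power of $\rho^{-1}$, while the factor $w=\exp(-1/\rho)$ decays faster than any power; hence $k=w\,\cP u$ vanishes to infinite order at $\pa\Omega$, and its extension by $0$ across $\pa\Omega$ defines a $\CI$ trace-free symmetric $2$-tensor on $X$ with support in $\bar\Omega$ and $\delta_h k=f$ on all of $X$ (both sides vanish off $\bar\Omega$). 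I expect the last step to be the main obstacle: establishing the sharp weighted boundary regularity and infinite-order vanishing with an exponential weight is precisely what requires the delicate weighted analysis of \cite{DelayCompact} (the cited Proposition~9.7 and Corollary~8.4), and a secondary subtlety is verifying the weighted Poincar\'e estimate with the correct cokernel, i.e.\ that the exponential weight produces no spurious near-kernel of $\cP$ concentrated at $\pa\Omega$ beyond $\cK$.
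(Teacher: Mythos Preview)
The paper does not prove this theorem at all: it is stated as a black-box citation to Delay \cite[Theorem~1.3, Proposition~9.7, Corollary~8.4]{DelayCompact}, and is then applied directly in the proof of Proposition~\ref{PropSO}. Your proposal is a faithful outline of Delay's own argument (the Corvino--Schoen/Chru\'sciel--Delay variational scheme with exponentially decaying weights, modulo the finite-dimensional conformal Killing cokernel), so there is nothing to compare against in the present paper; your sketch is correct in its broad strokes and your identification of the two delicate points---the weighted Poincar\'e inequality and the infinite-order boundary vanishing---matches exactly the content of the cited Proposition~9.7 and Corollary~8.4.
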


\begin{proof}[Proof of Proposition~\usref{PropSO}]
  Define the glued metric $g_0$ as in~\eqref{EqSOGlued}. As in (the proof of) Lemma~\ref{LemmaSO1}, we define $\Err$ to be the $\tau^4$ coefficient of
  \[
    P_0(g_0)\equiv\sum_{i=1}^N L_0(g_\dS+\chi_i(g_{p_i,\bhm_i}-g_\dS))\bmod\tau^5\CI(M;S^2\,{}^0 T^*M).
  \]
  We can thus compute $\Err$ using Lemma~\ref{LemmaSO1}; its normal-tangential component is equal to $\Err_{N T}$ as defined in Lemma~\ref{LemmaSO2}.
  
  Since the cutoffs $\chi_i$ are identically $1$ in a neighborhood of $\ol{V_{p_i}}$, there exists an open set $\Omega\subset\Sph^3$ with $\bar\Omega\cap\ol{V_{p_i}}=\emptyset$ for all $i$, and so that $\supp\Err_{N T}\Subset\Omega$; moreover, we may choose $\Omega$ to be connected. Suppose $V\in\cV(\Omega)$ is a conformal Killing vector field. We contend that $V=\tilde V|_\Omega$ for a conformal Killing vector field $\tilde V\in\cV(\Sph^3)$. Indeed, on any connected $n$-dimensional Riemannian manifold, the dimension of the space of conformal Killing vector fields is at most $(n+1)(n+2)/2$, and on $\Sph^n$ it is equal to this. We conclude that the restriction map $\ker_{\cV(\Sph^3)}\delta_{h,0}^*\to\ker_{\cV(\Omega)}\delta_{h,0}^*$, which is injective (as a consequence of the explicit description in Proposition~\ref{PropSConfKillSph}), must be an isomorphism.

  By Lemma~\ref{LemmaSO2}, the balance condition implies that the conditions of Theorem~\ref{ThmSODelay} are satisfied; thus, there exists $k\in\CI(\Sph^3;S^2 T^*\Sph^3)$, $\tr_{g_{\Sph^3}}k=0$, $\supp k\subset\bar\Omega$, with
  \begin{equation}
  \label{EqSODivEq}
    -(\Lambda/3)3\delta_h k=-\Err_{N T}.
  \end{equation}
  In the splitting~\eqref{Eq0ESplitTr}, put
  \[
    \tilde g_0 = (0, 0, 0, k) \in \CI(M;S^2\,{}^0 T^*M).
  \]
  In view of~\eqref{EqSInd3}, we have $\tilde g_0\in\ker I(L_0,3)$. Therefore, Lemma~\ref{Lemma0EEin} and Corollary~\ref{Cor0EInd} imply that, modulo $\tau^5\CI(M;S^2\,{}^0 T^*M)$,
  \[
    P_0(g_0+\tilde g_0) \equiv \tau^4\bigl( (0,\Err_{N T},0,0) + I(L_0[\tau],3)\tilde g_0 \bigr) \equiv 0,
  \]
  finishing the proof.
\end{proof}

\begin{rmk}
\label{RmkSOWeyl}
  A direct calculation shows that the error $\Err_{N T}$ is, up to a constant rescaling, equal to the divergence (with respect to the induced metric $h$ on $\pa M$) of the leading order term of the normal-tangential-normal-tangential part of the Weyl tensor of $g_0$. Thus, \cite[Lemma~(3.1)]{FriedrichDeSitterPastSimple}, in particular \cite[Equation~(3.12)]{FriedrichDeSitterPastSimple}, requires the solution of the same divergence equation~\eqref{EqSODivEq}. Solving Friedrich's conformal Einstein field equations then produces a solution of the Einstein vacuum equation and proves Theorem~\ref{ThmSGlue}. As motivated in the introduction, we give a different, self-contained proof below.
\end{rmk}

\subsection{Gauge fixing; construction of the formal solution}
\label{SsSG}

Following DeTurck~\cite{DeTurckPrescribedRicci}, we make the following definition:

\begin{definition}
\label{DefSG}
  Let $g^0$ and $g$ denote two Lorentzian metrics on the same manifold.
  \begin{enumerate}
  \item We define the \emph{gauge 1-form} by
    \[
      \Ups(g;g^0) = g(g^0)^{-1}\delta_g\sfG_g g^0.
    \]
  \item The \emph{gauge-fixed Einstein operator} is
    \[
      P(g;g^0) := 2\bigl(\Ric(g)-\Lambda - \delta_g^*\Ups(g;g^0)\bigr).
    \]
    Its linearization in the first argument is denoted
    \[
      L_{g,g^0}(\dot g) = D_1|_g P(\dot g;g^0) := \frac{d}{d s}P(g+s\dot g;g^0)|_{s=0}.
    \]
  \end{enumerate}
\end{definition}

We first discuss general properties of these operators. Following~\cite[\S3]{GrahamLeeConformalEinstein}, we have, using the Levi-Civita connection of $g$,
\begin{align*}
  &D_1|_g\Ups(\dot g;g^0) = -\delta_g\sfG_g\dot g + \sC(\dot g) - \sD(\dot g), \\
  &\qquad C_{\mu\nu}^\kappa = \half((g^0)^{-1})^{\kappa\lambda}(g^0_{\mu\lambda;\nu}+g^0_{\nu\lambda;\mu}-g^0_{\mu\nu;\lambda}),\quad
    D^\kappa=g^{\mu\nu}C_{\mu\nu}^\kappa, \\
  &\qquad \sC(\dot g)_\kappa = g_{\kappa\lambda}C_{\mu\nu}^\lambda\dot g^{\mu\nu},\quad
    \sD(\dot g)_\kappa = D^\lambda\dot g_{\kappa\lambda}.
\end{align*}
In the special case $g=g^0$, we have $\sC\equiv 0$ and $\sD\equiv 0$, and therefore by~\eqref{Eq0ELinRic}
\[
  L_{g,g} = \Box_g + 2\sR_g - 2\Lambda.
\]
If moreover $g=(3/\Lambda)\tau^{-2}(-d\tau^2+h(x,d x))$ is a product metric as in~\eqref{Eq0EMetricProd}, then in the splitting~\eqref{Eq0ESplitTr}
\begin{subequations}
\begin{equation}
\label{EqSGIndOp}
  3\Lambda^{-1} I(L_{g,g},\lambda) = \lambda^2-3\lambda + \begin{pmatrix} -6 & 0 & 0 & 0 \\ 0 & -4 & 0 & 0 \\ 0 & 0 & -6 & 0 \\ 0 & 0 & 0 & 0 \end{pmatrix}.
\end{equation}
For later use, we note that the indicial roots are, in increasing order,
\begin{equation}
\label{EqSGIndRoots}
  \half(3-\sqrt{33})\in(-2,-1),\quad
  -1,\quad
  4,\quad
  \half(3+\sqrt{33})\in(4,5).
\end{equation}
\end{subequations}

For more general metrics, arguments similar to those in Lemmas~\ref{Lemma0EEinLinPert}--\ref{Lemma0EEin} give:
\begin{lemma}
\label{LemmaSGLin}
  Let $g_{0 0}\in\CI(M;S^2\,{}^0 T^*M)$, and suppose that $g^0\in\CI(M;S^2\,{}^0 T^*M)$ is such that $g^0-g_{0 0}\in\tau^m\CI$ for some $m\in\N$. Suppose moreover that $\tilde g\in\tau^{m'}\CI$ for some $m'\in\N$, $m'\geq m$, and put $g=g^0+\tilde g$.\footnote{In particular, $g-g_{0 0}\in\tau^m\CI$.} Then $L_{g,g^0}-L_{g_{0 0},g_{0 0}}\in\tau^m\Diff_0^2(M;S^2\,{}^0 T^*M)$. Moreover, if $\dot g\in\tau^{m_2}\CI$, then $P(g+\dot g;g^0) - P(g;g^0) - L_{g;g^0}\dot g \in \tau^{2 m_2}\CI$.
\end{lemma}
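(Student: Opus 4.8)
The plan is to follow the template of the proofs of Lemmas~\ref{Lemma0EEinLinPert} and~\ref{Lemma0EEin}, the only new feature being the gauge term $\delta_g^*\Ups(g;g^0)$ and the tensors $\sC$, $\sD$. First I would produce a workable formula for $L_{g,g^0}$: starting from the identity $P(g;g^0)=P_0(g)-2\delta_g^*\Ups(g;g^0)$ of Definition~\ref{DefSG} and linearizing in the first slot, I would combine~\eqref{Eq0ELinRic} with the expansion $D_1|_g\Ups(\dot g;g^0)=-\delta_g\sfG_g\dot g+\sC(\dot g)-\sD(\dot g)$ recorded just before the lemma; the $\delta_g^*\delta_g\sfG_g$ contributions cancel exactly as in the case $g=g^0$ (which gives $L_{g,g}=\Box_g+2\sR_g-2\Lambda$), leaving
\[
  L_{g,g^0}\dot g = \Box_g\dot g + 2\sR_g\dot g - 2\Lambda\dot g - 2\bigl[(D_g\delta^*)(\dot g)\bigr]\Ups(g;g^0) - 2\delta_g^*\bigl(\sC(\dot g)-\sD(\dot g)\bigr),
\]
where $(D_g\delta^*)(\dot g)$ is the $g$-derivative of the operator $\delta^\bullet{}^*$ in the direction $\dot g$ (a first-order, smooth-coefficient $0$-operator in $\dot g$). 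The last two terms are precisely those that vanish identically when $g=g^0$: then $\sfG_{g^0}g^0=-g^0$, hence $\Ups(g^0;g^0)=\delta_{g^0}(-g^0)=0$, and $C^\kappa_{\mu\nu}=D^\kappa=0$. Specializing further to $g=g^0=g_{00}$ recovers $L_{g_{00},g_{00}}=\Box_{g_{00}}+2\sR_{g_{00}}-2\Lambda$.

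Next I would establish the decay of the coefficients using two observations. First, exactly as in Lemma~\ref{Lemma0EEinLinPert}, the coefficients of $\Box_g$, $\sR_g$, $\delta_g^*$, written as $0$-differential operators, are universal smooth functions of the components of $g$, of $g^{-1}$, and of their $\Diff_0$-derivatives; since $g-g_{00}=(g^0-g_{00})+\tilde g\in\tau^m\CI$ and $\tau^m\CI$ is a $\CI(M)$-module stable under $\Diff_0(M)$, under inversion of metrics, and under composition with smooth functions, this gives $\Box_g-\Box_{g_{00}}\in\tau^m\Diff_0^2$ and $\sR_g-\sR_{g_{00}}\in\tau^m\CI$. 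Second, the tensor fields $\Ups(g;g^0)$, $C^\kappa_{\mu\nu}$, $D^\kappa$ vanish when $g=g^0$ and depend smoothly on $g$, $g^0$ and their first $\Diff_0$-derivatives; since $g-g^0=\tilde g\in\tau^{m'}\CI$, Taylor expansion in the $\tilde g$-slot yields $\Ups(g;g^0),C,D\in\tau^{m'}\CI\subset\tau^m\CI$ (here one uses $m'\ge m$), whence the last two terms in the displayed formula lie in $\tau^{m'}\Diff_0^1\subset\tau^m\Diff_0^2$. Combining these, $L_{g,g^0}-L_{g_{00},g_{00}}\in\tau^m\Diff_0^2(M;S^2\,{}^0 T^*M)$, which is the first claim.

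For the second statement I would argue verbatim as in Lemma~\ref{Lemma0EEin}: where $g$ and $g+\dot g$ are nondegenerate $0$-metrics, $\dot g\mapsto P(g+\dot g;g^0)$ is given by a fixed expression, polynomial in $\dot g$ and its $\Diff_0$-derivatives, with coefficients depending smoothly (through $(g+\dot g)^{-1}$) on $g$, $g^0$, $\dot g$; the terms of degree $0$ and $1$ in $\dot g$ are exactly $P(g;g^0)$ and $L_{g,g^0}\dot g$, so the remainder $P(g+\dot g;g^0)-P(g;g^0)-L_{g,g^0}\dot g$ is a locally finite sum of terms that are at least bilinear in $(\dot g,\ \Diff_0\text{-derivatives of }\dot g)$ with smooth coefficients. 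Since $\dot g\in\tau^{m_2}\CI$ and $\tau^{m_2}\CI$ is $\Diff_0$-stable, each such term lies in $\tau^{2 m_2}\CI$, giving the claim.

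None of the steps is deep; the only care needed is in the first step — the cancellation of the $\delta_g^*\delta_g\sfG_g$ terms and the verification that $\Ups$, $\sC$, $\sD$ vanish when the two metrics coincide — together with the (automatic, given $m'\ge m$) upgrade of the decay of $\Ups(g;g^0)$ from $\tau^m$ to $\tau^{m'}$. Everything else is a transcription of the arguments already carried out for the ungauged operator $P_0$.
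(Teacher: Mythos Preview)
Your proposal is correct and follows exactly the approach the paper indicates: the paper's ``proof'' is simply the sentence preceding the lemma, ``arguments similar to those in Lemmas~\ref{Lemma0EEinLinPert}--\ref{Lemma0EEin} give,'' and you have carried out those arguments in detail, including the one new ingredient (the cancellation of $\delta_g^*\delta_g\sfG_g$ and the vanishing of $\Ups$, $\sC$, $\sD$ at $g=g^0$). Your write-up is in fact more explicit than what the paper provides.
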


Applying this with $g_{0 0}$ a product metric as in~\eqref{Eq0EMetricProd}, we conclude that $I(L_{g,g^0},\lambda)$ is equal to the right hand side of~\eqref{EqSGIndOp}.

Returning to the black hole gluing problem and the notation of Proposition~\ref{PropSO}, we now define the `background metric' $g^0$ to be
\begin{equation}
\label{EqSGBackground}
  g^0 = g_0+\tilde g_0 \in \CI(M;S^2\,{}^0 T^*M).
\end{equation}
Since $\Ups(g^0;g^0)=0$ and $\Ric(g_0)-\Lambda g_0\in\tau^5\CI$, we have
\[
  P(g^0;g^0) \in \tau^5\CI(M;S^2\,{}^0 T^*M);
\]
moreover, by construction, $P(g^0;g^0)$ vanishes near $\bigcup_i\ol{V_{p_i}}$.

\begin{prop}
\label{PropSGFormal}
  Under the assumptions of Proposition~\usref{PropSO}, and with $g^0$ as in~\eqref{EqSGBackground}, there exists a metric perturbation $\tilde g_1\in\tau^5\CI(M;S^2\,{}^0 T^*M)$, vanishing near $\bigcup_i\ol{V_{p_i}}$, so that $P(g^0+\tilde g_1;g^0)\in\tau^\infty\CI=\bigcap_{m\in\N}\tau^m\CI$ (i.e.\ vanishing to infinite order at $\tau=0$).
\end{prop}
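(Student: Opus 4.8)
The plan is a standard order-by-order construction at $\pa M$ followed by Borel (asymptotic) summation. I will produce a formal power series $\sum_{k\ge 5}\tau^k v_k$ with coefficients $v_k\in\CI(\pa M;(S^2\,{}^0 T^*M)|_{\pa M})$ — pulled back to a collar $[0,\eps)_\tau\times\pa M$ as $\tau$-independent sections — so that the partial sums $\tilde g_1^{(K)}:=\sum_{k=5}^K\tau^k v_k$ satisfy $P(g^0+\tilde g_1^{(K)};g^0)\in\tau^{K+1}\CI$ and vanish near $\bigcup_i\ol{V_{p_i}}$, and then take $\tilde g_1$ to be an asymptotic sum of this series.

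The single structural input is that the relevant indicial operator is invertible. Since $g^0$, and likewise each $g^0+\tilde g_1^{(K)}$, differs from a de~Sitter product metric as in~\eqref{Eq0EMetricProd} by an element of $\tau^2\CI$ (by~\eqref{EqSBdyMet}, \eqref{EqSODiff}, Proposition~\ref{PropSO}, and $\tilde g_1^{(K)}\in\tau^5\CI$), Lemma~\ref{LemmaSGLin} and the remark following it give that $L_{g^0+\tilde g_1^{(K)},g^0}$ has indicial family equal to the right-hand side of~\eqref{EqSGIndOp}; its indicial roots~\eqref{EqSGIndRoots} are all strictly less than $5$, so $I(L_{g^0+\tilde g_1^{(K)},g^0},k)$ is an invertible bundle endomorphism of $(S^2\,{}^0 T^*M)|_{\pa M}$ for every integer $k\ge 5$. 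I will use two consequences repeatedly: this indicial operator involves no differentiation along $\pa M$, so its inverse maps a section supported in a given open set to a section supported in the same set; and near each $V_{p_i}$ the metric $g^0+\tilde g_1^{(K)}$ equals the exact SdS metric $g_{p_i,\bhm_i}$ (the corrections vanishing there), which solves $\Ric(g)-\Lambda g=0$ and satisfies $\Ups(g_{p_i,\bhm_i};g^0)=\Ups(g^0;g^0)=0$ there, so that $P(g^0+\tilde g_1^{(K)};g^0)$ vanishes near $\bigcup_i\ol{V_{p_i}}$.

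With this in hand the recursion is routine. For the base case, $P(g^0;g^0)\in\tau^5\CI$ vanishes near $\bigcup_i\ol{V_{p_i}}$ (as noted before the proposition); writing its $\tau^5$-coefficient at $\pa M$ as $e_5$, set $v_5:=-I(L_{g^0,g^0},5)^{-1}e_5$, which then vanishes near $\bigcup_i\ol{V_{p_i}}$. From $L_{g^0,g^0}(\tau^5 v_5)=\tau^5 I(L_{g^0,g^0},5)v_5+\cO(\tau^6)$ and the quadratic estimate in Lemma~\ref{LemmaSGLin} ($P(g^0+\tau^5 v_5;g^0)-P(g^0;g^0)-L_{g^0,g^0}(\tau^5 v_5)\in\tau^{10}\CI$) one gets $P(g^0+\tilde g_1^{(5)};g^0)\in\tau^6\CI$. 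For the inductive step, given $\tilde g_1^{(K)}$ with $P(g^0+\tilde g_1^{(K)};g^0)\in\tau^{K+1}\CI$, let $e_{K+1}$ be the $\tau^{K+1}$-coefficient — which vanishes near $\bigcup_i\ol{V_{p_i}}$, since $P(g^0+\tilde g_1^{(K)};g^0)$ does — set $v_{K+1}:=-I(L_{g^0+\tilde g_1^{(K)},g^0},K+1)^{-1}e_{K+1}$, and put $\tilde g_1^{(K+1)}:=\tilde g_1^{(K)}+\tau^{K+1}v_{K+1}$. Expanding $P(g^0+\tilde g_1^{(K+1)};g^0)$ about $g^0+\tilde g_1^{(K)}$, the quadratic remainder lies in $\tau^{2(K+1)}\CI\subset\tau^{K+2}\CI$ by Lemma~\ref{LemmaSGLin}, while $L_{g^0+\tilde g_1^{(K)},g^0}(\tau^{K+1}v_{K+1})\equiv-\tau^{K+1}e_{K+1}\bmod\tau^{K+2}\CI$ by the choice of $v_{K+1}$, so $P(g^0+\tilde g_1^{(K+1)};g^0)\in\tau^{K+2}\CI$. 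Finally, Borel's lemma supplies $\tilde g_1\in\tau^5\CI(M;S^2\,{}^0 T^*M)$, supported away from $\bigcup_i\ol{V_{p_i}}$, with $\tilde g_1-\tilde g_1^{(K)}\in\tau^{K+1}\CI$ for every $K$; Lemma~\ref{LemmaSGLin} then gives $P(g^0+\tilde g_1;g^0)-P(g^0+\tilde g_1^{(K)};g^0)\in\tau^{K+1}\CI$ for each $K$, whence $P(g^0+\tilde g_1;g^0)\in\bigcap_K\tau^{K+1}\CI=\tau^\infty\CI$.

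I do not anticipate a serious obstacle: everything hinges on the invertibility of $I(L,k)$ for integers $k\ge 5$, which is given to us by~\eqref{EqSGIndRoots}. The two points that need attention are purely organizational — that the background metric in $P(\,\cdot\,;g^0)$ is held fixed at $g^0$ while the linearization point moves through the partial sums (which Lemma~\ref{LemmaSGLin} is designed to handle), and that the support condition near $\bigcup_i\ol{V_{p_i}}$ persists at every stage, which it does precisely because the indicial operator is a bundle endomorphism and $\Ups(g^0;g^0)=0$ forces $P(\,\cdot\,;g^0)$ to coincide with the vanishing Einstein operator there.
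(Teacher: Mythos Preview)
Your proof is correct and follows essentially the same approach as the paper: an order-by-order construction using the invertibility of the indicial operator $I(L_{g_\dS,g_\dS},m)$ for $m\geq 5$ (from~\eqref{EqSGIndRoots}), the quadratic error estimate of Lemma~\ref{LemmaSGLin}, and a Borel summation. Your write-up is slightly more explicit than the paper's about why the support condition near $\bigcup_i\ol{V_{p_i}}$ propagates (the indicial operator being a pointwise bundle endomorphism, and $g^0+\tilde g_1^{(K)}$ coinciding with the exact SdS metric there), but the argument is the same.
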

\begin{proof}
  Suppose we have already found $\tilde g_1$ as in the statement and with $P(g^0+\tilde g_1;g^0)\in\tau^m\CI$ for some $m\geq 5$; note that for $m=5$, this holds for $\tilde g_1=0$. Moreover, under these assumptions, $P(g^0+\tilde g;g^0)$ vanishes near $\bigcup_i\ol{V_{p_i}}$. Then, for $\dot g=\tau^m\dot g_0\in\tau^m\CI$, we have, using Lemma~\ref{LemmaSGLin} and noting that $g^0-g_\dS\in\tau^3\CI$,
  \begin{align*}
    P(g^0+\tilde g_1+\dot g;g^0) &\equiv P(g^0+\tilde g_1;g^0) + L_{g^0+\tilde g_1;g^0}\dot g \bmod \tau^{2 m}\CI \\
      &\equiv P(g^0+\tilde g_1;g^0) + L_{g_\dS,g_\dS}\dot g \bmod \tau^{m+3}\CI \\
      &\equiv P(g^0+\tilde g_1;g^0) + \tau^m I(L_{g_\dS,g_\dS},m)\dot g_0 \bmod \tau^{m+1}\CI.
  \end{align*}
  But for $m\geq 5$, $I(L_{g_\dS,g_\dS},m)$ is invertible pointwise on $\pa M$ in view of~\eqref{EqSGIndRoots}, hence we can find $\dot g_0\in\CI(\pa M;S^2\,{}^0 T^*_{\pa M}M)$, vanishing near $\bigcup_i\ol{V_{p_i}}$ such that this vanishes (modulo $\tau^{m+1}\CI$). Replacing $\tilde g_1$ by $\tilde g_1+\dot g$ improves the order of vanishing of $P(g^0+\tilde g_1;g^0)$ at $\tau=0$ by one order. A Borel summation argument produces a formal solution $\tilde g_1\in\tau^5\CI$.
\end{proof}

\subsection{Solving the nonlinear equation; conclusion of the construction}
\label{SsSC}

Using indicial operator arguments, one cannot go beyond Proposition~\ref{PropSGFormal}; the remaining (`trivial') error can however easily be solved away:
\begin{prop}
\label{PropSCNonlinear}
  With $g_1:=g^0+\tilde g_1$ defined using Proposition~\usref{PropSGFormal}, there exists $\tilde g_2\in\tau^\infty\CI(M;S^2\,{}^0 T^*M)$, vanishing near $\bigcup_i\ol{V_{p_i}}$, so that $g:=g_1+\tilde g_2$ satisfies
  \begin{equation}
  \label{EqSCEq}
    P(g;g^0)=0\ \text{near}\ \tau=0.
  \end{equation}
\end{prop}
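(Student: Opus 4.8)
The plan is to solve the quasilinear wave equation $P(g_1+\tilde g_2;g^0)=0$ for $\tilde g_2$ near $\tau=0$, with $\tilde g_2$ vanishing to infinite order there. Two structural facts drive the argument. First, by Proposition~\ref{PropSGFormal} the forcing $f:=P(g_1;g^0)$ lies in $\tau^\infty\CI(M;S^2\,{}^0 T^*M)$ and vanishes near $\bigcup_i\ol{V_{p_i}}$. Second, in the DeTurck gauge (Definition~\ref{DefSG} and the formulas following it) the operator $g\mapsto P(g;g^0)$ is a quasilinear second order operator whose linearization $L_{g,g^0}$ has principal part $\Box_g\otimes\Id$; thus $L_{g,g^0}$ is a 0-wave operator for which $\pa M$ is spacelike (cf.\ Lemma~\ref{Lemma0EOps}), and by Lemma~\ref{LemmaSGLin} it is, for $g$ close to $g_\dS$, a perturbation of the product-metric model with the same indicial roots~\eqref{EqSGIndRoots}. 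I would therefore solve the ``scattering problem'' for $L$ backwards from $\pa M$ with vanishing asymptotic data --- order by order first, then nonlinearly --- in the spirit of Vasy's treatment of the scalar wave equation on asymptotically de~Sitter spaces~\cite[Theorem~5.5]{VasyWaveOndS}.

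First I would set up the linear theory. Fix a weight $N>\half(3+\sqrt{33})$, above the largest indicial root in~\eqref{EqSGIndRoots}, and an integer $k>(n+1)/2=2$. On a lens-shaped domain $\Omega_{\tau_0}:=\{0\le\tau\le\tau_0\}\cap U$, with $U$ a neighborhood of $\pa M\setminus\{p_1,\dots,p_N\}$ whose spatial extent shrinks as $\tau$ grows, I would prove weighted energy estimates for $L_{g_1+v,g^0}$, uniform for $v$ in a small ball of $\tau^N\Hext_0^k(\Omega_{\tau_0})$, yielding: for $f'\in\tau^N\Hext_0^{k-1}(\Omega_{\tau_0})$ supported away from $\bigcup_i\ol{V_{p_i}}$, a solution $u\in\tau^N\Hext_0^k(\Omega_{\tau_0})$ of $L_{g_1+v,g^0}u=f'$, obtained by solving backwards from $\pa M$ with trivial asymptotic data, with $\|u\|\le C\|f'\|$. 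Here $\tau_0>0$ is chosen small enough that the null cones of $g_1+v$ stay uniformly close to those of $g_\dS$, so that finite speed of propagation keeps the backward domain of dependence of $\Omega_{\tau_0}\cap\{\tau=0\}$ inside the region where $g^0$ is $\tau^2\CI$-close to $g_\dS$ and disjoint from $\bigcup_i\ol{V_{p_i}}$; consequently $\supp u$ is also disjoint from $\bigcup_i\ol{V_{p_i}}$.

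Next I would run the standard quasilinear iteration: set $u^{(0)}=0$ and let $u^{(j+1)}$ solve $L_{g_1+u^{(j)},g^0}u^{(j+1)}=F(u^{(j)})$, where $F(u)$ collects all terms of $P(g_1+u;g^0)$ other than the quasilinear top-order part. By estimates as in Lemma~\ref{LemmaSGLin} and the algebra property of Lemma~\ref{Lemma0BSobAlg}, $F$ maps a small ball of $\tau^N\Hext_0^k(\Omega_{\tau_0})$ into $\tau^N\Hext_0^{k-1}(\Omega_{\tau_0})$ with dependence on $u$ that is quadratic modulo the forcing $f$; and $\|f\|_{\tau^N\Hext_0^{k-1}(\Omega_{\tau_0})}\to 0$ as $\tau_0\to 0$ since $f\in\tau^\infty\CI$. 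The energy estimates then give uniform bounds on the $u^{(j)}$ in a small ball, and a standard argument (uniform bounds with interpolation, or contraction in a space of one lower order) produces a limit $\tilde g_2\in\tau^N\Hext_0^k(\Omega_{\tau_0})$ solving $P(g_1+\tilde g_2;g^0)=0$ and vanishing near $\bigcup_i\ol{V_{p_i}}$. To upgrade regularity and decay, I would insert this back into the equation: $L_{g_1,g^0}\tilde g_2=-f-(\text{quadratic in }\tilde g_2)\in\tau^{2N}\Hext_0^{k-1}(\Omega_{\tau_0})$, and since $2N$ still exceeds every indicial root the linear theory gives $\tilde g_2\in\tau^{2N}\Hext_0^k(\Omega_{\tau_0})$; iterating this bootstrap and letting $k\to\infty$ yields $\tilde g_2\in\tau^\infty\CI(\Omega_{\tau_0};S^2\,{}^0 T^*M)$, still vanishing near $\bigcup_i\ol{V_{p_i}}$. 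Then $g:=g_1+\tilde g_2$ is a Lorentzian 0-metric, being a $\tau^\infty$-perturbation of $g_1$, and satisfies~\eqref{EqSCEq} on the neighborhood $\Omega_{\tau_0}$ of $\tau=0$.

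The hard part will be the linear step: the weighted energy estimates for the 0-wave operator $L_{g_1+v,g^0}$ near the spacelike conformal boundary $\pa M$, uniform over the quasilinear family and compatible with finite speed of propagation so that supports stay controlled. This is exactly where the asymptotically de~Sitter structure enters, via the analysis of~\cite{VasyWaveOndS}; granted it, the quasilinear iteration and the decay/regularity bootstrap are routine.
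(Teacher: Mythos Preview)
Your proposal follows the paper's route: weighted backward energy estimates for the 0-wave operator near the spacelike conformal boundary, a standard quasilinear iteration, then an upgrade to $\tau^\infty\CI$. The one point you overcomplicate is the linear step you flag as ``hard'': since the forcing is already in $\tau^\infty\CI$, the paper simply takes $N$ arbitrarily large and uses the crude multiplier $\tau^{-2 N}\tau\pa_\tau$ with \emph{any} positive definite fiber inner product on $S^2\,{}^0 T^*M$---no reference to the indicial roots~\eqref{EqSGIndRoots} is needed at this stage---and it handles the decay/regularity upgrade by re-solving directly in each $\tau^N\Hext_0^k$ (with $k_0=5>\tfrac{n+1}{2}+2$, not merely $k>2$) and invoking the standard continuation criterion so that the domain of existence does not shrink as $k$ increases.
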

\begin{proof}
  The key point is that forced linear wave equations on de~Sitter space, or with any product metric of the form~\eqref{Eq0EMetricProd} or indeed any metric smoothly asymptotic to it, can be solved backwards on function spaces encoding sufficient polynomial decay in $\tau$ (i.e.\ sufficient \emph{exponential} decay in $-\log\tau$), with the solution unique in such spaces; see the proof of \cite[Lemma~1]{ZworskiRevisitVasy} (where $N$ is the order of decay in $|x_1|$, $x_1:=-\tau^2$) for the relevant energy estimate, and the beginning of \cite[\S3]{VasyWaveOndS} (where our $\tau,x$ are denoted $x,y$).
  
  Since the error we need to solve away vanishes to all orders at $\tau=0$, there is no need to choose vector field multipliers and \emph{positive definite} vector bundle inner products on $S^2\,{}^0 T^*M$ carefully in such energy estimates; rather, fixing \emph{any} smooth positive definite inner product on $S^2\,{}^0 T^*M$, one obtains an energy estimate using the vector field multiplier $\tau^{-2 N}\tau\pa_\tau$ when $N$ is sufficiently large. Indeed, the only contribution to the bulk term in the estimate which comes with a factor $N$ in front arises from differentiating $\tau^{-2 N}$ and is thus of the form $-2 N\tau^{-2 N} E(\tau\pa_\tau,\tau\pa_\tau)$, where $E$ is the energy-momentum tensor of the wave $u$ one wishes to estimate; all other bulk terms can be estimated by $\tau^{-2 N}$ times a \emph{bounded} (independently of $N$) multiple of $|u|^2+|\tau\pa_\tau u|^2+|\tau\pa_x u|^2$. But since $\tau\pa_\tau$ is timelike, choosing $N$ large enough produces a coercive bulk term, and one obtains, for example, an estimate $\|u\|_{\tau^N\Hext_0^1(\Omega)}\leq C\|\Box_{g_1}u\|_{\tau^N\Hext_0^0(\Omega)}$ for sufficiently large $N$, where $\Omega=\tau^{-1}([0,\half))$. One can also commute any fixed number of 0-derivatives through the equation and thus (upon increasing $N$ and $C$) obtain the estimate $\|u\|_{\tau^N\Hext_0^{k+1}(\Omega)}\leq C\|\Box_{g_1}u\|_{\tau^N\Hext_0^k(\Omega)}$.

  For the quasilinear wave equation at hand, we work with 0-Sobolev spaces with more than $\half(\dim M)+2=4$ derivatives; thus, fix $k_0=5>4$. Then by a simple adaptation of the standard iteration scheme for solving quasilinear wave equations (see e.g.\ \cite[\S16]{TaylorPDE3}), we obtain, for sufficiently large $N_0$, a solution $\tilde g_2\in\tau^{N_0}\Hext_0^{k_0}(\Omega_0)$ (unique in this space) of equation~\eqref{EqSCEq}, where $\Omega_0=\tau^{-1}([0,\eps_0))$ for sufficiently small $\eps_0>0$. Moreover, $\tilde g_2$ vanishes near each $\ol{V_{p_i}}$ since $P(g_1;g^0)$ does; recall that $g_1$ and $g^0$ are both equal to the Schwarzschild--de~Sitter metric $g_{p_i,\bhm_i}$ near $\ol{V_{p_i}}$.

  For any $k\geq k_0$, one can similarly find a solution of~\eqref{EqSCEq} in the space $\tau^N\Hext_0^k(\Omega)$ where $\Omega$ is a neighborhood of $\tau=0$; since solutions of quasilinear wave equations can be continued (backwards, i.e.\ in the direction of increasing $\tau$) in the same regularity class as long as a \emph{fixed} low regularity norm remains finite, we can in fact take $\Omega=\Omega_0$. We conclude that $\tilde g_2\in \bigcap_{N,k}\tau^N\Hext_0^k(\Omega_0) = \tau^\infty\CI(\Omega_0)$, the final equality following from the fact that $\tau^k\Hext_0^{k+3}(\Omega_0)\subset\cC^k(\Omega_0)$ by Sobolev embedding and using that $\pa_\tau=\tau^{-1}\cdot\tau\pa_\tau$ and $\pa_x=\tau^{-1}\cdot\tau\pa_x$.
\end{proof}

By construction, the metric $g$ meets the requirements~\eqref{ItSGlueSdS}--\eqref{ItSGluePert} of Theorem~\ref{ThmSGlue}. We prove that it also satisfies requirement~\eqref{ItSGlueEin}; recall that $g-g^0\in\tau^5\CI$ by Propositions~\ref{PropSO} and \ref{PropSCNonlinear}, and $g-g^0$ vanishes near $\bigcup\ol{V_{p_i}}$.

\begin{lemma}
\label{LemmaSCUniq}
  Suppose $g,g^0$ are two Lorentzian metrics defined near $\pa M\setminus\{p_1,\ldots,p_N\}$, smooth down to $\tau=0$ as sections of $S^2\,{}^0 T^*M$, and equal, modulo $\tau\CI(M;S^2\,{}^0 T^*M)$, to a metric of product type \eqref{Eq0EMetricProd} near $\pa M\setminus\{p_1,\ldots,p_N\}$. Suppose that near $\bigcup_i\ol{V_{p_i}}$, we have $g=g^0$ and $\Ric(g)-\Lambda g=0$. Suppose moreover that $\Ric(g^0)-\Lambda g^0$ and $g-g^0$ lie in $\tau^5\CI(M;S^2\,{}^0 T^*M)$. If $P(g;g^0)=0$, then $\Ric(g)-\Lambda g=0$ and $\Ups(g;g^0)=0$ near $\pa M$.
\end{lemma}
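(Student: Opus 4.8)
The plan is to run the classical DeTurck-type argument: show that the gauge one-form $\Ups := \Ups(g;g^0)$ satisfies a homogeneous linear wave equation, that it vanishes to infinite order at $\pa M$ as well as identically near $\bigcup_i\ol{V_{p_i}}$, and then appeal to backward uniqueness for wave equations on $0$-metrics asymptotic to a product metric~\eqref{Eq0EMetricProd} — the same energy estimate already used in the proof of Proposition~\ref{PropSCNonlinear} — to conclude $\Ups\equiv 0$ near $\pa M$; feeding this back into $P(g;g^0)=0$ then gives $\Ric(g)-\Lambda g=\delta_g^*\Ups=0$. For the first step, rewrite $P(g;g^0)=0$ as $\Ric(g)-\Lambda g=\delta_g^*\Ups$ and apply $\delta_g\sfG_g$ to both sides. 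On the left, $\delta_g\sfG_g\Ric(g)=0$ is the contracted second Bianchi identity ($\sfG_g\Ric(g)$ being the Einstein tensor), while $\sfG_g(\Lambda g)=-\Lambda g$ (in $\dim M=4$) is annihilated by $\delta_g$; hence the left side vanishes and $\delta_g\sfG_g\delta_g^*\Ups=0$. A principal symbol computation shows that $2\delta_g\sfG_g\delta_g^*$ is a wave operator on one-forms with the same principal symbol as $\Box_g$; denote it $\Box_g^\Ups\in\Diff_0^2(M;{}^0T^*M)$, so that $\Box_g^\Ups\Ups=0$ near $\pa M\setminus\{p_1,\dots,p_N\}$.

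Next I would establish decay of $\Ups$. By hypothesis $g-g^0$ and $\Ric(g^0)-\Lambda g^0$ lie in $\tau^5\CI$; expanding $P_0(g)=P_0(g^0)+L_{0,g^0}(g-g^0)+\cO(\tau^{10})$ via Lemma~\ref{Lemma0EEin} gives $\Ric(g)-\Lambda g\in\tau^5\CI$, hence $\delta_g^*\Ups\in\tau^5\CI$. Since $\Ups\in\CI(M;{}^0T^*M)$ is smooth down to $\tau=0$ and the indicial family $I(\delta_g^*,\lambda)$ — computed from the product model using Lemma~\ref{Lemma0EOps} — is injective for every $\lambda\neq-1$, an order-by-order comparison of Taylor coefficients at $\tau=0$ (using $\lambda=0,1,2,3,4$) forces $\Ups\in\tau^5\CI$. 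Then, since $\Box_g^\Ups\Ups=0$ and the indicial roots of $\Box_g^\Ups$ are $-1$, $4$, $\tfrac12(3\pm\sqrt{33})$, all strictly less than $5$ (so $I(\Box_g^\Ups,j)$ is invertible for every integer $j\ge5$), a second order-by-order argument upgrades this to $\Ups\in\tau^\infty\CI$, i.e.\ $\Ups$ vanishes to infinite order at $\pa M$. Separately, wherever $g=g^0$ one has $\Ups(g^0;g^0)=\delta_{g^0}\sfG_{g^0}g^0=0$ because $\sfG_{g^0}g^0$ is a constant multiple of $g^0$; in particular $\Ups$ vanishes near $\bigcup_i\ol{V_{p_i}}$.

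The final step is the uniqueness argument. The one-form $\Ups$ solves the linear wave equation $\Box_g^\Ups\Ups=0$ on a neighborhood of $\pa M\setminus\{p_1,\dots,p_N\}$, lies in $\tau^N\Hext_0^k$ for every $N$ and $k$ (by the infinite-order vanishing), and vanishes near each $\ol{V_{p_i}}$. Applying the backward energy estimate for wave operators on such $0$-metrics — with the multiplier $\tau^{-2N}\tau\pa_\tau$, exactly as in the proof of Proposition~\ref{PropSCNonlinear} — yields $\Ups=0$ on the relevant neighborhood of $\pa M$: the `initial data' at $\pa M$ vanishes to infinite order, replacing the vanishing Cauchy data of the short-time theory, while the lateral portions of the domain boundary near the $p_i$ lie inside the region where $\Ups$ has already been shown to vanish, so finite speed of propagation closes the argument. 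Thus $\Ups\equiv0$ near $\pa M$, and then $\Ric(g)-\Lambda g=\delta_g^*\Ups=0$ near $\pa M$, as claimed.

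I expect the main obstacle to be this last step: the metrics $g$, $g^0$ are only defined on a neighborhood of $\pa M\setminus\{p_1,\dots,p_N\}$ — a region pinching down to $\tau=0$ as one approaches the excised points — so one must verify that the backward energy estimate of Proposition~\ref{PropSCNonlinear} genuinely applies here, i.e.\ that the causal geometry of $g$ (de~Sitter-like away from the $p_i$, exactly Schwarzschild--de~Sitter near them) is compatible with sweeping the domain by backward characteristics from $\pa M\setminus\{p_i\}$, with any leakage toward a $p_i$ entering the region where $g=g^0$ and $\Ups=0$. A secondary, purely computational point is confirming that $5$ exceeds all indicial roots of $\Box_g^\Ups$, which is what makes the infinite-order vanishing in the decay step go through.
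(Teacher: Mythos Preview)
Your proposal is correct and follows essentially the same route as the paper: derive the wave equation $2\delta_g\sfG_g\delta_g^*\Ups=0$ from $P(g;g^0)=0$ via the second Bianchi identity, upgrade $\Ups\in\tau^5\CI$ to $\Ups\in\tau^\infty\CI$ by inverting the indicial family (whose roots you correctly identify as $-1$, $4$, $\tfrac12(3\pm\sqrt{33})$, all $<5$), and then conclude $\Ups\equiv 0$ near $\pa M$ by the backward energy estimate with multiplier $\tau^{-2N}\tau\pa_\tau$.

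The one place where you take a small detour is in establishing $\Ups\in\tau^5\CI$. You go through $\delta_g^*\Ups\in\tau^5\CI$ and then invoke injectivity of $I(\delta_g^*,\lambda)$ for $\lambda\neq -1$; this works, but the paper simply observes that $\Ups(g;g^0)\in\tau^5\CI$ follows directly from the hypothesis $g-g^0\in\tau^5\CI$. Indeed, $\Ups(g^0;g^0)=0$, and since $\Ups(g;g^0)=g(g^0)^{-1}\delta_g\sfG_g g^0$ is built from $g$, $g^0$, their inverses, and one $0$-derivative, the difference $\Ups(g;g^0)-\Ups(g^0;g^0)$ lies in $\tau^5\CI$ by the same reasoning as in Lemma~\ref{Lemma0EEinLinPert}. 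Your argument is not wrong, just slightly more elaborate than necessary. Your concern about the causal geometry near the $p_i$ is handled exactly as you anticipate: $\Ups$ vanishes identically there, so finite speed of propagation closes the estimate.
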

\begin{proof}
  The conclusion holds trivially near $\ol{V_{p_i}}$. Now, by the second Bianchi identity, the equation $\delta_g\sfG_g P(g;g^0)=0$ implies the wave equation
  \begin{equation}
  \label{EqSCUniqProp}
    2\delta_g\sfG_g\delta_g^*\Ups(g;g^0) = 0
  \end{equation}
  for the gauge 1-form $\Ups(g;g^0)$. By assumption, we have $\Ups(g;g^0)\in\tau^5\CI(M;{}^0 T^*M)$. The idea is to view equation~\eqref{EqSCUniqProp} as a scattering problem (`initial value problem for data at infinity') for $\Ups(g;g^0)$. We need to show that the a priori decay of $\Ups(g;g^0)$ is a suitable replacement for vanishing Cauchy data in the usual proof of short time existence for the Einstein equation, in that it suffices to conclude $\Ups(g;g^0)\equiv 0$.

  We first contend that in fact $\Ups(g;g^0)\in\tau^\infty\CI$ vanishes to infinite order at $\pa M$; this uses an indicial operator argument. Thus, if $g_{0 0}$ is a product metric on a $4$-manifold $M$ as in~\eqref{Eq0EMetricProd}, then we have, in the bundle splitting~\eqref{Eq0ESplitTr},
  \[
    3\Lambda^{-1} I(2\delta_{g_{0 0}}\sfG_{g_{0 0}}\delta_{g_{0 0}}^*,\lambda) = \begin{pmatrix} \lambda^2-3\lambda-6 & 0 \\ 0 & \lambda^2-3\lambda-4 \end{pmatrix};
  \]
  its indicial roots are given by~\eqref{EqSGIndRoots}. If $g-g_{0 0}\in\tau\CI$, then $2\delta_g\sfG_g\delta_g^*-2\delta_{g_{0 0}}\sfG_{g_{0 0}}\delta_{g_{0 0}}^*\in\tau\Diff_0^2$ by arguments similar to Lemmas~\ref{Lemma0EEinLinPert} and \ref{Lemma0EEin}.

  Now, if we already know $\Ups(g;g^0)\in\tau^m\CI$ for some $m\geq 5$ (the case $m=5$ being our starting point), then, writing $\Ups(g;g^0)=\tau^m\Ups_0+\tilde\Ups$ for $\Ups_0\in\CI(\pa M;{}^0 T^*_{\pa M}M)$ ($\tau$-independent) and $\tilde\Ups\in\tau^{m+1}\CI$, equation~\eqref{EqSCUniqProp} implies
  \[
    I(2\delta_g\sfG_g\delta_g^*,m)\Ups_0 = 0.
  \]
  But the indicial operator appearing here is pointwise invertible, hence $\Ups_0\equiv 0$ and therefore $\Ups(g;g^0)=\tilde\Ups\in\tau^{m+1}\CI$. Since $m$ was arbitrary, this proves our contention.

  Finally, the rapid decay of $\Ups(g;g^0)$ at $\tau=0$ (and its vanishing near the $p_i$ where the metric $g$ is singular) implies by a unique continuation argument for the wave equation~\eqref{EqSCUniqProp}, based on an energy estimate with multiplier $\tau^{-2 N}\tau\pa_\tau$ for sufficiently large $N$, that $\Ups(g;g^0)$ vanishes identically near $\tau=0$. See \cite[Lemma~1]{ZworskiRevisitVasy}; a closely related alternative approach is given in \cite[Proposition~5.3]{VasyWaveOndS}.

  Since $P(g;g^0)=0$ and $\Ups(g;g^0)=0$, we conclude that $\Ric(g)-\Lambda g=0$ near $\tau=0$.
\end{proof}

The proof of Theorem~\ref{ThmSGlue} is complete.

\begin{rmk}
\label{RmkSCFormal}
  The Fefferman--Graham construction \cite[\S3]{FeffermanGrahamAmbientBook} produces, given a Riemannian metric $h_0$ and a transverse traceless tensor $h_n$ on an $n$-manifold $\pa M$ a formal solution $g^0$ of $\Ric(g^0)-\Lambda g^0\in\tau^\infty\CI$ on $M=[0,1)_\tau\times\pa M$ of the form $g^0=\tau^{-2}(-d\tau^2+h(\tau,x,d x))$, where $h_0$ and $h_n$ are the coefficients of $\tau^n$ in the polyhomogeneous expansion of $h$. (Concretely, $h$ has an expansion into $\tau^j$, $j\in\N_0$, and $\tau^j\log\tau$ for integer $j\geq n$.) Using this formal solution as a background metric for the gauge-fixed Einstein equation, Proposition~\ref{PropSCNonlinear} and Lemma~\ref{LemmaSCUniq} produce a true solution $g=g^0+\cO(\tau^\infty)$ of $\Ric(g)-\Lambda g=0$. We stress that this does not require any smallness conditions on the data $h_0,h_n$. See \cite[Theorem~1.3]{RodnianskiShlapentokhRothmanSelfSimilar} for a different approach.
\end{rmk}

We end with a discussion of the domain of existence when all masses are subextremal and small; we show that the cosmological horizons of at least two different black holes intersect nontrivially in the maximally globally hyperbolic development of the glued metric $g$ of Theorem~\ref{ThmSGlue} and Remark~\ref{RmkSBH}. Let us work on the partial compactification
\[
  M = (0,\pi/2]_s \times \Sph^3,\quad
  g = (3/\Lambda)\cos^{-2}(s)(-d s^2+g_{\Sph^3}),
\]
of de~Sitter space; the gluing theorem is, so far, local near $s=\pi/2$. Fix $N\geq 2$ distinct points $p_1,\ldots,p_N\in\Sph^3$. If $N=2$, we assume $p_2\neq-p_1$. Denoting by $d$ the Riemannian distance on $(\Sph^3,d_{\Sph^3})$, we set $d_0:=\min_{i\neq j} d(p_i,p_j)\in(0,\pi)$; without loss of generality, suppose the distance is minimized for $p_1,p_2$ so that
\[
  d_0=d(p_1,p_2).
\]
Let moreover $0<r_0<\pi/2$ be less than $\half$ times the smallest radius of any of the balls $V_{p_i}$ in Theorem~\ref{ThmSGlue}. Given subextremal masses $\bhm_1,\ldots,\bhm_N$ so that
\[
  \sD=\{(p_1,\bhm_1),\ldots,(p_N,\bhm_N)\}
\]
is balanced, the metric $g$ constructed in Theorem~\ref{ThmSGlue} is equal to $g_{p_i,\bhm_i}$ in the domain of dependence of $B(p_i,2 r_0)$. Fix $0<\eps<r_0/16$, and define $\Sigma_0\subset M$ as the union of $\Sph^3\setminus\bigcup_{i=1}^N B(p_i,r_0)\subset\pa M$ and the $N$ spacelike surfaces
\[
  \cN_i:=\bigl\{(s,p)\in M\colon d(p,p_i)=r_0-\eta(\pi/2-s),\ \pi/2-s\leq r_0/2+4\eps\bigr\},
\]
where $0\leq\eta-1<1$ is fixed so that $r_0-\eta(r_0/2+4\eps)>0$. Note that $\cN_i$ penetrates the cosmological horizon of an observer in de~Sitter space tending to $p_i$, i.e.\ the backwards light cone from $(\pi/2,p_i)\in\pa M$. We denote by
\[
  \cS_{\eta,i} := \bigl\{(s,p)\in M \colon d(p,p_i)=r_0-\eta(r_0/2+\eps),\ \pi/2-s=r_0/2+\eps\bigr\} \subset \cN_i
\]
a sphere which lies just inside of said cosmological horizon when $\eta-1\ll 1$. See Figure~\ref{FigSLong}.

\begin{figure}[!ht]
\centering
\includegraphics{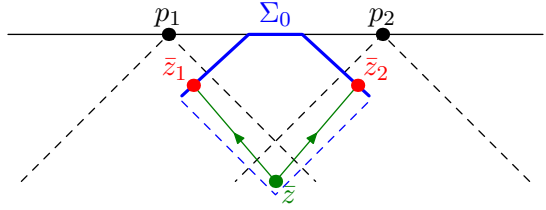}
\caption{Illustration of the argument giving long-time existence of the metric $g$ in Theorem~\ref{ThmSGlue} for small subextremal masses. The geometry shown here is that of de~Sitter space. The region enclosed by the blue lines is the backwards domain of dependence of the spacelike surface $\Sigma_0$. By Cauchy stability, two observers starting at $\bar z$ can reach the points $\bar z_1\in\cS_1$ and $\bar z_2\in\cS_2$ if we glue sufficiently light black holes into $p_1$ and $p_2$, in which case the black dashed lines become the cosmological horizons of the black holes.}
\label{FigSLong}
\end{figure}

Consider the rescaled gluing data
\[
  \lambda\sD := \{(p_1,\lambda\bhm_1),\ldots,(p_N,\lambda\bhm_N)\}
\]
for $\lambda\geq 0$. For $\lambda=0$, all masses vanish, hence we are gluing pieces of de~Sitter space into de~Sitter space---the result of course being de~Sitter space, with metric defined globally on $M$. Let $q\in\Sph^3$ denote the midpoint between $p_1$ and $p_2$ (so $d(p_1,q)=d(p_2,q)=d_0/2$), and let
\[
  \bar z_j = (\bar s_j,\bar p_j)\in\cS_{\eta,j},\quad j=1,2,
\]
denote the point on $\cS_{\eta,j}$ for which $\bar p_j$ is closest to $q$. If we had $\eta=1$, then for any point $z=(s,q)$ with $\pi/2-s>d_0/2+2\eps$, we would have
\[
  d(q,\bar p_j) = \half(d_0-r_0)+\eps < (\pi/2-s) - r_0/2-\eps = (\pi/2-s) - (\pi/2-\bar s_j),
\]
so $(\bar s_1,\bar p_1)$ and $(\bar s_2,\bar p_2)$ are both contained in the timelike future $I^+(z)$ of $z$. For $\eta>1$ sufficiently close to $1$, and shrinking $\eps>0$ if necessary, this holds for the point
\[
  z := (\pi/2-d_0/2-3\eps, q) \in M.
\]

For small $\lambda\geq 0$, one can define the SdS metric $g_{p_i,\lambda\bhm_i}$, extended across the cosmological horizon and defined in $(0,\pi/2]_s$ times a $2 r_0$-neighborhood of $p_i$ inside $\Sph^3$, in such a manner that as $\lambda\to 0$, the weighted difference $\tau^{-3}(g_\dS-g_{p_i,\lambda\bhm_i})$ (cf.\ equation~\eqref{EqSODiff}) converges smoothly to $0$ as a section of $S^2\,{}^0 T^*M$ away from the line $L_i:=\{(s,p_i)\colon s\in(-\pi/2,\pi/2]\}$.\footnote{%
A systematic and more precise way of accomplishing this is to use geometric microlocal techniques \cite{MelroseDiffOnMwc}. For a single SdS black hole centered at $p\in\Sph^3$, one starts with the total space $[0,\lambda_0)\times M$ and blows up $[0,\lambda_0)\times\{p\}$ and then $\{0\}\times L$, $L=(0,\pi/2]\times\{p\}$. The first blow-up resolves the singular nature---due to its $r$-dependence---of the SdS metric near $p$, and the second blow-up resolves the event horizon, whose $r$-coordinate goes to $0$ roughly linearly with $\lambda$. The family of SdS metrics with mass $\lambda\bhm$ can then be defined as a smooth section of the pullback of $S^2\,{}^0 T^*M$ to this resolved space, and, crucially, in such a manner that it equals the de~Sitter metric on the lift of $\lambda=0$.%
}
We claim that for sufficiently small $\lambda>0$, we can do the SdS gluing with parameters $\sD_\lambda$ in such a way that
\begin{enumerate}
\item\label{ItSL1} the point $z$ is contained in the maximal globally hyperbolic development of the glued metric $g_\lambda$ with respect to $\Sigma_0$ (and $\Sigma_0$ is spacelike for $g_\lambda$),
\item $\bar z_1,\bar z_2\in I^+(z)$ with respect to $g_\lambda$,
\item near $\bar z_j$, $g_\lambda$ is equal to $g_{p_j,\bhm_j}$ for $j=1,2$, and
\item\label{ItSL4} $\bar z_j$ lies \emph{inside} the cosmological horizon of the SdS black hole associated with the point $p_j$.
\end{enumerate}

To begin, let $\lambda_0>0$ be a small fixed constant. Consider then the naively glued metric $g_{\lambda,0}=\chi_0 g_\dS+\sum_{i=1}^N\chi_i g_{p_i,\lambda\bhm_i}$ analogously to~\eqref{EqSOGlued}; this fails to solve the Einstein vacuum equation by the amount $P_0(g_{\lambda,0})=\lambda\tau^4\Err(\lambda)$, where $\Err(\lambda)\in\CI([0,\lambda_0)\times M;S^2\,{}^0 T^*M)$ depends smoothly on $\lambda\geq 0$, and whose leading order term at $\pa M$ can be computed using Lemma~\ref{LemmaSO1} and equation~\eqref{EqSOConf}; in particular, the leading order ($\tau^0$) coefficient of $\Err$ is $\lambda$-independent. Here and below, we take $\lambda_0>0$ to be a small fixed constant. Thus, we can take the solution of the divergence equation $-(\Lambda/3)3\delta_{g_{\Sph^3}}k_\lambda=-\lambda\Err(\lambda)_{N T}$ (cf.\ equation~\eqref{EqSODivEq}), to be $k=\lambda k_1$ for some \emph{fixed} $k_1\in\CI(\Sph^3;S^2 T^*\Sph^3)$. For small $\lambda$, we then work with the background metric $g_\lambda^0=g_{\lambda,0}+k_\lambda$.

The remainder of the formal part of the gluing construction does not depend on any further choices; the Borel summation in the proof of Proposition~\ref{PropSGFormal} can be defined to produce a metric correction $\tilde g_{\lambda,1}\in\lambda\tau^5\CI([0,\lambda_0)\times M;S^2\,{}^0 T^*M)$ with support in a small fixed neighborhood of $\pa M$. The resulting formal solution $g_{\lambda,1}=g_\lambda^0+\tilde g_{\lambda,1}$ (cf.\ the statement of Proposition~\ref{PropSCNonlinear}) satisfies the gauge-fixed Einstein equation with error $P(g_{\lambda,1};g^0_\lambda)\in\lambda\tau^\infty\CI([0,\lambda_0)\times M;S^2\,{}^0 T^*M)$; the support of this error is disjoint from the $\cN_i$. But then, since $g_{\lambda,1}$ and $g_\lambda^0$ converge smoothly to the de~Sitter metric in the backwards domain of dependence of $\Sigma_0$, Cauchy stability for the solution $\tilde g_{\lambda,2}\in\lambda\tau^\infty\CI$ of the quasilinear equation $P(g_{\lambda,1}+\tilde g_{\lambda,2};g^0_\lambda)=0$ implies that $g_\lambda=g_{\lambda,1}+\tilde g_{\lambda,2}$ indeed exists (uniquely, by domain of dependence considerations) on a sufficiently large subset of $M$ so that the requirements~\eqref{ItSL1}--\eqref{ItSL4} are indeed met.

\subsection{Necessity of the balance condition}
\label{SsSB}

In this section, we prove Theorem~\ref{ThmSUniq}. Thus, suppose that $\Ric(g)-\Lambda g=0$ for a metric $g$ of the form
\[
  g = g_\dS + \sum_{i=1}^N \chi_i(g_{p_i,\bhm_i}-g_\dS) + \tau^3(\log\tau)g_\ell + \tau^3 g_3 + \tilde g,\quad \tilde g\in\cA^{3+\eps}(M;S^2\,{}^0 T^*M),
\]
where $\chi_i\in\CI(\pa M)$ is a cutoff localizing to a small neighborhood of $p_i$; here $g_\ell,g_3\in\CI(\pa M;S^2\,{}^0 T^*_{\pa M}M)$, and $g_\ell,g_3,\tilde g$ have supports disjoint from the $p_i$. Then, with $L_0=D_{g_\dS}P_0$ as in~\eqref{EqSLinOp}, Lemma~\ref{Lemma0EEin} gives
\begin{equation}
\label{EqSBf}
  f := L_0\bigl(\tau^3(\log\tau)g_\ell + \tau^3 g_3+\tilde g\bigr) + \tau^4\Err \in \cA^{6-\delta}\quad\forall\,\delta>0,
\end{equation}
where the normal-tangential component $\Err_{N T}$ of $\Err\in\CI(\pa M;S^2\,{}^0 T^*M)$ takes the form given in Lemma~\ref{LemmaSO2}.

Note that, for any $g_\ell,g_3,\tilde g$ in the above function spaces, we have $f\in\tau^3(\log\tau)\CI+\tau^3\CI+\cA^{3+\eps}$. Using~\eqref{EqSBf}, its $\tau^3\log\tau$ coefficient is $I(L_0,3)g_\ell=0$. In view of~\eqref{EqSInd3} (with $h_s,\tau_s$ replaced by $h,\tau$, where $h$ is the boundary metric~\eqref{EqSBdyMet}), we thus have, in the bundle splitting~\eqref{Eq0ESplitTr},
\[
  g_\ell = (3 u,\eta,u,k),
\]
where $u\in\CI(\pa M)$, $\eta\in\CI(\pa M;T^*\pa M)$, and $k\in\CI(\pa M;S^2 T^*\pa M)$ with $\tr_h k=0$ are supported away from the $p_i$.

Using~\eqref{Eq0BIndLog} and Corollary~\ref{Cor0EInd}, we then compute the $\tau^3$ coefficient of $f$ as
\[
  0 = I(L_0,3)g_3 + \pa_\lambda I(L_0,\lambda)|_{\lambda=3} g_\ell = I(L_0,3)g_3 + (\Lambda/3) \bigl( -3 u, 0, -3 u, 3 k \bigr).
\]
Since the pure trace part of the tangential-tangential component of $I(L_0,3)$ always vanishes, we must have $k=0$. But then we can then write
\[
  g_3=g_{3 0}+g_{3 1}, \quad I(L_0,3)g_{3 0}=0,\quad g_{3 1}=(u,0,0,0),
\]
with $g_{3 1}$ defined so that it solves $I(L_0,3)g_{3 1}+\pa_\lambda I(L_0,\lambda)|_{\lambda=3}g_\ell=0$.

Lastly then, projecting to the normal-tangential component, we have $L_0(\tilde g)_{N T}\in\cA^{4+\eps}$ by Corollary~\ref{Cor0EInd}, hence the $\tau^4$ component of $f$ is equal to
\begin{align*}
  0&=\Err_{N T} + \pa_\lambda I(L_0[\tau],\lambda)|_{\lambda=3}g_\ell + I(L_0[\tau],3)g_{3 1} + I(L_0[\tau],3)g_{3 0} \\
  & = \Err_{N T} + (\Lambda/3)(-2 d_X)u + (\Lambda/3)2 d_X u + (\Lambda/3)(-3\delta_h(g_{3 0})_{T T 0}) \\
  & = \Err_{N T} - \Lambda\delta_h(g_{3 0})_{T T 0},
\end{align*}
where $(g_{3 0})_{T T 0}\in\CI(\pa M;S^2 T^*\pa M)$ is the trace-free part of the tangential-tangential component of $g_{3 0}$. Integrating this against a conformal Killing vector field $V$, an integration by parts implies that the second term does not contribute, and therefore $\int_{\pa M} V(\Err_{N T})\,|d h|=0$ for all $V\in\mathfrak{conf}_3$. An application of Lemma~\ref{LemmaSO2} concludes the proof of Theorem~\ref{ThmSUniq}.

\subsection{Gluing with noncompact spatial topology}
\label{SsSNc}

The balance condition in Definition~\ref{DefSBalance} captures the orthogonality of the leading order error term to conformal Killing vector fields on $\Sph^3$. If, however, we allow the trace-free 2-tensor $k$ in~\eqref{EqSODivEq} to be blowing up sufficiently fast at a point $p_\infty\in\Sph^3$ distinct from the $p_i$, this obstruction disappears, since elements of the relevant cokernel now need to vanish at sufficiently high order at $p_\infty$; since conformal Killing vector fields on the sphere vanish at most quadratically at any given point, this cokernel is empty.

A more natural way to phrase this is to pass to the upper half space picture of de~Sitter space, $M_u=[0,\infty)_{\tilde\tau}\times\R^3_{\tilde x}$, see~\eqref{Eq0BdSNoncpt2}; the point $p_\infty$ is the point at infinity within the conformal boundary $\pa M_u$, and we need to solve equation~\eqref{EqSODivEq}, with $h=(3/\Lambda)g_{\R^3}$ now a constant multiple of the Euclidean metric, and the error term having compact support in $\tilde x$. We can \emph{always} solve this, with the solution $k$ having support disjoint from the $\ol{V_{p_i}}$, if we allow $k$ to be nonzero in $|\tilde x|\gg 1$ and allow for $\la\tilde x\ra^{-2+\eps}$ decay, $\eps>0$. More precisely, using the function spaces of \cite[Appendix~A]{ChruscielDelayMapping}, we can find $k\in H_{\phi,\psi}^1(g_{\R^3})$ with $\phi=\la\tilde x\ra$ and $\psi=\la\tilde x\ra^{1/2-2\eps}$. (Refining the weights to be exponential at the boundary of the domain in which one wants $k$ to be supported enforces the correct support of $k$.) Indeed, since $\delta_{g_{\R^3}}\colon H_{\phi,\psi}^1(g_{\R^3})\to L^2_{\psi\phi}(g_{\R^3})$ by~\cite[(A.4)]{ChruscielDelayMapping}, the relevant cokernel (for $s=1$) consists of conformal Killing 1-forms $\omega$ on $(\R^3,g_{\R^3})$ lying in $L^2_{\psi^{-1}\phi^{-1}}(g_{\R^3})$. But since all such $\omega$ have size at least $1$ near infinity, and since $\int |1|^2 \psi^{-2}\phi^{-2}|d g_{\R^3}|\sim \int \la\tilde x\ra^{-3+4\eps} r^2\,d r$ diverges, the space of such $\omega$ is trivial.

The remainder of the gluing construction as before; however, in the final step, the domain of existence of the quasilinear equation might shrink to zero as $|\tilde x|\to\infty$. (Even when not gluing \emph{any} black hole into the upper half space model  $M_u$, we point out that past directed null-geodesics leave $M_u$ in finite affine time; see \cite[Figure~7]{HintzZworskiHypObs}.) In summary:

\begin{thm}
\label{ThmSNc}
  Let $N\in\N$, and let $p_1,\ldots,p_N\in\R^3$, $\bhm_1,\ldots,\bhm_N\in\R$. Suppose $V_{p_i}\subset\R^3$ is a punctured neighborhood of $p_i$, and suppose $\ol{V_{p_i}}\cap\ol{V_{p_j}}=\emptyset$ for $i\neq j$. Then there exists a neighborhood $U$ of $\pa M_u\setminus\{p_1,\ldots,p_N\}$ and a Lorentzian 0-metric $g\in\CI(U;S^2\,{}^0 T^*_U M_u)$ satisfying the properties~\eqref{ItSGlueEin}--\eqref{ItSGluePert} of Theorem~\usref{ThmSGlue}.
\end{thm}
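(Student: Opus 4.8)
The plan is to transcribe the three-step proof of Theorem~\ref{ThmSGlue} to the upper half space model $M_u=[0,\infty)_{\tilde\tau}\times\R^3_{\tilde x}$ with $g_\dS=\frac{3}{\Lambda}\tilde\tau^{-2}(-d\tilde\tau^2+d\tilde x^2)$ as in~\eqref{Eq0BdSNoncpt2}, replacing the compact boundary manifold $(\Sph^3,g_{\Sph^3})$ by $(\R^3,g_{\R^3})$ and, crucially, solving the leading-order divergence equation in \emph{weighted} spaces so that the conformal-Killing obstruction disappears. First I would form the naively glued metric $g_0=\chi_0 g_\dS+\sum_{i=1}^N\chi_i g_{p_i,\bhm_i}$ as in~\eqref{EqSOGlued}, with cutoffs $\chi_i\in\CI(\pa M_u)$ supported in small balls around the $p_i\in\R^3$. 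Since each $g_{p_i,\bhm_i}-g_\dS$ lies in $\tau^3\CI$ away from $p_i$ (cf.~\eqref{EqSODiff}), Lemma~\ref{LemmaSO1} and Lemma~\ref{Lemma0EEin} give $P_0(g_0)\equiv\tau^4\Err\bmod\tau^5\CI$ with $\Err$ supported in a compact subset of $\R^3_{\tilde x}$, whose normal-tangential component $\Err_{N T}$ is, by the computation in \S\ref{SsSO}, a constant multiple of $\delta_h$ applied to a compactly supported $1$-form, with $h=(3/\Lambda)g_{\R^3}$.

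The key step is to solve $-(\Lambda/3)3\delta_h k=-\Err_{N T}$, $\tr_h k=0$, with $k$ supported away from the $\ol{V_{p_i}}$ but now allowed to be nonzero, with only $\la\tilde x\ra^{-2+\eps}$ decay, near spatial infinity. As indicated in the paragraph preceding the statement, I would invoke the mapping property $\delta_{g_{\R^3}}\colon H_{\phi,\psi}^1(g_{\R^3})\to L^2_{\psi\phi}(g_{\R^3})$ from~\cite[Appendix~A]{ChruscielDelayMapping} with $\phi=\la\tilde x\ra$, $\psi=\la\tilde x\ra^{1/2-2\eps}$: the relevant cokernel consists of conformal Killing $1$-forms $\omega$ on Euclidean $\R^3$ lying in $L^2_{\psi^{-1}\phi^{-1}}(g_{\R^3})$, and since every nonzero such $\omega$ is bounded below near infinity while $\int\la\tilde x\ra^{-3+4\eps}r^2\,d r=\infty$, this cokernel is trivial; hence the equation is solvable with no balance condition. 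To force $\supp k$ to avoid the $\ol{V_{p_i}}$ I would, as in the proof of Theorem~\ref{ThmSGlue}, exploit the underdetermined (equivalently, surjective-symbol) nature of $\delta_h$ on trace-free symmetric $2$-tensors together with exponential weights at the boundary of the region in which $k$ is to be supported, adapting Delay's argument (Theorem~\ref{ThmSODelay}) to these weights. Setting $\tilde g_0=(0,0,0,k)$ in the splitting~\eqref{Eq0ESplitTr} (so $\tilde g_0\in\ker I(L_0,3)$ by~\eqref{EqSInd3}, with $h$ the boundary metric $(3/\Lambda)g_{\R^3}$), Corollary~\ref{Cor0EInd} and Lemma~\ref{Lemma0EEin} yield $P_0(g^0)\in\tau^5\CI$ for the background metric $g^0:=g_0+\tilde g_0$, still equal to $g_{p_i,\bhm_i}$ near $\ol{V_{p_i}}$, and property~\eqref{ItSGluePert} holds since $g^0-g_\dS\in\tau^3\CI$.

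The remaining two steps are local near $\pa M_u$ and go through verbatim. The formal-solution step (Proposition~\ref{PropSGFormal}) uses only that $I(L_{g_\dS,g_\dS},\lambda)$ is as in~\eqref{EqSGIndOp}, pointwise invertible for integers $\geq 5$ by~\eqref{EqSGIndRoots}, plus Borel summation; it produces $\tilde g_1\in\tau^5\CI$ supported near $\pa M_u$ and away from the $\ol{V_{p_i}}$ with $P(g^0+\tilde g_1;g^0)\in\tau^\infty\CI$. The nonlinear step (Proposition~\ref{PropSCNonlinear}) solves the quasilinear wave equation $P(g_1+\tilde g_2;g^0)=0$ backwards from $\pa M_u$ in weighted $0$-Sobolev spaces using the multiplier $\tau^{-2N}\tau\pa_\tau$, available since $\tau\pa_\tau$ is timelike for any metric smoothly asymptotic to a product metric~\eqref{Eq0EMetricProd} and the estimate is local in $\tilde x$; likewise the Bianchi-identity/unique-continuation argument (Lemma~\ref{LemmaSCUniq}) applies, giving $\Ric(g)-\Lambda g=0$ and $\Ups(g;g^0)=0$ near $\pa M_u$. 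The one genuine difference from the compact case is that past-directed null-geodesics of $g_\dS$ leave $M_u$ in finite affine time (cf.~\cite[Figure~7]{HintzZworskiHypObs}), so by finite speed of propagation the neighborhood $U$ on which the backwards solution exists will in general shrink as $|\tilde x|\to\infty$; this is harmless since the theorem only asserts existence on \emph{some} neighborhood of $\pa M_u\setminus\{p_1,\ldots,p_N\}$.

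The main obstacle is the first step: one must verify the precise mapping properties of $\delta_{g_{\R^3}}$ on the Chru\'sciel--Delay weighted spaces \emph{and} combine this polynomial-weight solvability at spatial infinity with the compact-support control near the $\ol{V_{p_i}}$, i.e.\ produce a \emph{single} trace-free $k$ that simultaneously decays like $\la\tilde x\ra^{-2+\eps}$ and vanishes in prescribed punctured neighborhoods of the $p_i$. This amounts to checking that Delay's support-control argument (and the weighted elliptic theory behind the surjectivity of $\delta_h$ on trace-free symmetric $2$-tensors) is robust under these weights; everything else is a routine transcription of \S\ref{SsSO}--\S\ref{SsSC}.
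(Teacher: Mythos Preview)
Your proposal is correct and follows essentially the same approach as the paper: the paper's argument in \S\ref{SsSNc} is precisely to solve the divergence equation~\eqref{EqSODivEq} on $(\R^3,g_{\R^3})$ in the Chru\'sciel--Delay weighted spaces with $\phi=\la\tilde x\ra$, $\psi=\la\tilde x\ra^{1/2-2\eps}$ (noting the cokernel is trivial since conformal Killing 1-forms are bounded below near infinity), enforce the support condition via exponential weights at the boundary of the allowed region, and then repeat \S\ref{SsSG}--\ref{SsSC} verbatim, with the caveat that the domain of existence may shrink as $|\tilde x|\to\infty$. One small slip: your sentence ``$\Err_{N T}$ is \ldots a constant multiple of $\delta_h$ applied to a compactly supported $1$-form'' has the roles reversed---$\Err_{N T}$ is itself a compactly supported $1$-form, and the task is to write it as $\delta_h$ of a trace-free symmetric $2$-tensor.
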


\begin{rmk}
\label{RmkSNcGeneral}
  More generally, we can glue any spacetime into de~Sitter space whose metric is defined in an interval (in $\tilde\tau$) times an annulus (in $\tilde x$) around a point $p\in\pa M_u$, provided the metric is asymptotic to $g_\dS$ at a rate $\tau^3$ in this region. This can be further relaxed, but we will not pursue this further.
\end{rmk}

\section{Multi-Kerr--de~Sitter spacetimes}
\label{SK}

The goal is to glue several Kerr--de~Sitter (KdS) black holes into a neighborhood of the future conformal boundary of global de~Sitter space; we thus continue to work on the manifold $M=[0,1)_\tau\times\Sph^3$ as in~\eqref{EqSMfd}.

\subsection{Kerr--de~Sitter metrics in corotating coordinates; parameterization}
\label{SsKP}

We recall the KdS metric with parameters $\bhm\in\R$ and $\bha\in\R$ in the form
\begin{equation}
\label{EqKMetric}
  g_{\bhm,\bha} = -\frac{\Delta_r}{\rho^2}\Bigl(d t_0-\frac{\bha\sin^2\theta_0}{\Delta_0}d\phi_0\Bigr)^2 + \frac{\rho^2}{\Delta_r}d r_0^2 + \frac{\rho^2}{\Delta_\theta}d\theta_0^2 + \sin^2\theta_0 \frac{\Delta_\theta}{\rho^2}\Bigl(\bha\,d t_0-\frac{r_0^2+\bha^2}{\Delta_0}d\phi_0\Bigr)^2,
\end{equation}
where we define (omitting the dependence on $\bhm,\bha$ from the notation)
\begin{alignat*}{3}
  \Delta_r &= (r_0^2+\bha^2)\Bigl(1-\frac{\Lambda r_0^2}{3}\Bigr)-2\bhm r_0,\qquad &
  \Delta_\theta &= 1+\frac{\Lambda}{3}\bha^2\cos^2\theta_0, \\
  \rho^2 &= r_0^2+\bha^2\cos^2\theta_0,\qquad &
  \Delta_0 &= 1+\frac{\Lambda}{3}\bha^2.
\end{alignat*}
(This matches the expression in~\cite[Equations~(5.2)--(5.4)]{SchlueCosmological} upon adding the subscripts `$0$', and differs from that in~\cite[Equation~(3.12)]{HintzVasyKdSStability} only by a constant rescaling of $t$ by $\Delta_0$.) Following~\cite[Appendix~B]{SchlueCosmological},\footnote{For consistency with~\S\ref{SS}, the roles of $t_0,r_0,\dots$ and $t,r,\dots$ are reversed compared to the reference.} we recall the coordinate change which displays $g_{\bhm,\bha}$ as a perturbation of the de~Sitter metric~\eqref{Eq0BdSStatic2} up to terms of size $r^{-3}$ (as uniformly degenerate symmetric 2-tensors). Thus, under the change of coordinates\footnote{The definition of $r^2$ implies that $r_0^2\cos^2\theta_0\leq r^2$, hence $\theta$ is well-defined.}
\begin{alignat*}{3}
  t &= t_0, &\qquad
  \phi &= \phi_0-\frac{\Lambda}{3}\bha t_0, \\
  r^2 &= \frac{1}{\Delta_0}\bigl(r_0^2\Delta_\theta+\bha^2\sin^2\theta_0\bigr), &\qquad
  r\cos\theta &= r_0\cos\theta_0.
\end{alignat*}
the de~Sitter metric $g_\dS$ in~\eqref{Eq0BdSStatic2} takes the form
\begin{align*}
  g_\dS &= \Bigl(\frac{\Lambda}{3}(r_0^2+\bha^2\sin^2\theta_0)-1\Bigr)d t_0^2 + \frac{\rho^2}{(r_0^2+\bha^2)\bigl(1-\tfrac{\Lambda r_0^2}{3}\bigr)}d r_0^2 + \frac{\rho^2}{\Delta_\theta^2}d\theta_0^2 \\
    &\quad\qquad - \frac{2\Lambda}{3}\bha\frac{r_0^2+\bha^2}{\Delta_0}\sin^2\theta_0\,d t_0\,d\phi_0 + \frac{r_0^2+\bha^2}{\Delta_0}\sin^2\theta_0\,d\phi_0^2.
\end{align*}
Therefore
\begin{equation}
\label{EqKDiff}
  g_{\bhm,\bha} = g_\dS + c_{\bhm,\bha}, \quad c_{\bhm,\bha} = \frac{2\bhm r_0}{\rho^2}\Bigl(d t_0-\frac{\bha\sin^2\theta_0}{\Delta_0}d\phi_0\Bigr)^2 + \frac{2\bhm r_0\rho^2}{\Delta_r|_{\bhm=0}\Delta_r}d r_0^2.
\end{equation}
We then compute:
\begin{lemma}
\label{LemmaKDiff}
  Let $\tau_s=r^{-1}$, and denote by $\CI$ the space of functions which are smooth in $(t,\tau_s,\theta,\phi)$, $\tau_s=r^{-1}$, down to $\tau_s=0$. The symmetric 2-tensor $c_{\bhm,\bha}$ in~\eqref{EqKDiff} then has components, modulo $\tau_s^4\CI$, given by
  \begin{alignat*}{3}
    r^2(c_{\bhm,\bha})_{r r} &\equiv r^{-3}\Bigl[\Bigl(\frac{\Delta_\theta}{\Delta_0}\Bigr)^{3/2}\frac{18\bhm}{\Lambda^2}\Bigr], &\qquad
    r^{-2}(c_{\bhm,\bha})_{t t} &\equiv r^{-3}\Bigl[\sqrt{\frac{\Delta_\theta}{\Delta_0}}2\bhm\frac{\Delta_\theta^2}{\Delta_0^2}\Bigr], \\
    r^{-2}(c_{\bhm,\bha})_{t\phi} &\equiv r^{-3}\Bigl[\sqrt{\frac{\Delta_\theta}{\Delta_0}}\cdot(-2\bhm)\bha\sin^2\theta_0\frac{\Delta_\theta}{\Delta_0^2}\Bigr], &\qquad
    r^{-2}(c_{\bhm,\bha})_{\phi\phi} &\equiv r^{-3}\Bigl[\sqrt{\frac{\Delta_\theta}{\Delta_0}}2\bhm\frac{\bha^2\sin^4\theta_0}{\Delta_0^2}\Bigr].
  \end{alignat*}
  Furthermore, $(c_{\bhm,\bha})_{r t}=(c_{\bhm,\bha})_{r\phi}=(c_{\bhm,\bha})_{t\theta}=(c_{\bhm,\bha})_{\theta\phi}=0$, and
  \[
    r^{-2}(c_{\bhm,\bha})_{\theta\theta} \in \tau_s^5\CI, \quad
    (c_{\bhm,\bha})_{r\theta} \in \tau_s^3\CI.
  \]
\end{lemma}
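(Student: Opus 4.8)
The plan is to substitute the inverse of the stated coordinate change into the explicit formula~\eqref{EqKDiff} and expand the result in powers of $\tau_s=r^{-1}$. First I would record the exact identities $dt_0=dt$ and $d\phi_0=d\phi+\tfrac{\Lambda}{3}\bha\,dt$, which follow from $t=t_0$ and $\phi=\phi_0-\tfrac{\Lambda}{3}\bha t_0$; together with the algebraic identity $1-\tfrac{\Lambda\bha^2\sin^2\theta_0}{3\Delta_0}=\tfrac{\Delta_\theta}{\Delta_0}$ they rewrite the angular $1$-form in the first summand of~\eqref{EqKDiff} as
\[
  dt_0-\frac{\bha\sin^2\theta_0}{\Delta_0}\,d\phi_0=\frac{1}{\Delta_0}\bigl(\Delta_\theta\,dt-\bha\sin^2\theta_0\,d\phi\bigr),
\]
so that this summand equals $\tfrac{2\bhm r_0}{\rho^2\Delta_0^2}(\Delta_\theta\,dt-\bha\sin^2\theta_0\,d\phi)^2$, which involves only $dt$ and $d\phi$. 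The second summand $\tfrac{2\bhm r_0\rho^2}{\Delta_r|_{\bhm=0}\Delta_r}\,dr_0^2$, after writing $dr_0=\pa_r r_0\,dr+\pa_\theta r_0\,d\theta$ with $r_0,\theta_0$ viewed as functions of $(r,\theta)$, involves only $dr$ and $d\theta$. Hence the $rt$, $r\phi$, $t\theta$, $\theta\phi$ components of $c_{\bhm,\bha}$ vanish identically --- the vanishing part of the claim --- and the remaining components split as $tt,t\phi,\phi\phi$ from the first summand with coefficient $\tfrac{2\bhm r_0}{\rho^2\Delta_0^2}$ times $\Delta_\theta^2$, $-\Delta_\theta\bha\sin^2\theta_0$, $\bha^2\sin^4\theta_0$, and $rr,r\theta,\theta\theta$ from the second with common prefactor $\tfrac{2\bhm r_0\rho^2}{\Delta_r|_{\bhm=0}\Delta_r}$ times $(\pa_r r_0)^2$, $\pa_r r_0\,\pa_\theta r_0$, $(\pa_\theta r_0)^2$.

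It remains to expand in $\tau_s$. From the defining relations $r^2=\Delta_0^{-1}(r_0^2\Delta_\theta+\bha^2\sin^2\theta_0)$ and $r\cos\theta=r_0\cos\theta_0$, eliminating $\cos^2\theta_0$ via $r_0^2\cos^2\theta_0=r^2\cos^2\theta$ gives $r_0^2=r^2(1+\tfrac{\Lambda}{3}\bha^2\sin^2\theta)-\bha^2\sin^2\theta_0$; the implicit function theorem (in suitable angular charts on $\Sph^2$) then shows that $\tau_s r_0$ and $\theta_0$ extend smoothly as functions of $(\tau_s,\theta)$ down to $\tau_s=0$, where $\Delta_\theta/\Delta_0=(1+\tfrac{\Lambda}{3}\bha^2\sin^2\theta)^{-1}$ --- the identity that makes the substitution self-consistent --- and
\[
  r_0=\tau_s^{-1}\sqrt{\Delta_0/\Delta_\theta}+\cO(\tau_s),\qquad \pa_r r_0=\sqrt{\Delta_0/\Delta_\theta}+\cO(\tau_s^2),
\]
$\rho^2=r_0^2(1+\cO(\tau_s^2))$, $\Delta_r|_{\bhm=0}=-\tfrac{\Lambda}{3}r_0^4(1+\cO(\tau_s^2))$, $\Delta_r=\Delta_r|_{\bhm=0}(1+\cO(\tau_s^3))$. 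Feeding these into the coefficients above and converting from coordinate components to the $0$-coframe --- which sends $c_{rr}$ to $r^2 c_{rr}$ and the tangential components to $r^{-2}c_{\mu\nu}$, since $dr=-r\,d\tau_s/\tau_s$ --- yields precisely the stated leading $r^{-3}$ coefficients, with remainders in $\tau_s^4\CI$ by the smoothness just noted; for $(c_{\bhm,\bha})_{\theta\theta}$ and $(c_{\bhm,\bha})_{r\theta}$ the extra factor $r_0^{-5}$ in the common prefactor, together with the $0$-coframe normalization, forces the higher vanishing orders claimed there.

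Everything here is routine Taylor bookkeeping once the change of variables is under control; the one step deserving care is exactly that --- establishing that $\tau_s r_0$ and $\theta_0$ are smooth up to $\tau_s=0$ and that the elaborate $\theta$-dependence of $r_0$, $\rho^2$, $\Delta_r$ collapses to the clean coefficients in the statement --- which is precisely Schlue's change of coordinates in \cite[Appendix~B]{SchlueCosmological}.
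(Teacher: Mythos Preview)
Your proposal is correct and follows essentially the same approach as the paper's proof: substitute the coordinate change into the explicit expression~\eqref{EqKDiff} and expand in $\tau_s$. The only difference is one of bookkeeping convention --- you work covariantly, rewriting $dt_0,d\phi_0,dr_0$ in terms of $dt,d\phi,dr,d\theta$ (your clean identity $dt_0-\tfrac{\bha\sin^2\theta_0}{\Delta_0}d\phi_0=\Delta_0^{-1}(\Delta_\theta\,dt-\bha\sin^2\theta_0\,d\phi)$ makes the vanishing of the mixed components immediate), whereas the paper works contravariantly, expressing $\pa_{t_0},\pa_{\phi_0},\pa_{r_0},\pa_{\theta_0}$ in terms of $\pa_t,\pa_\phi,\pa_r,\pa_\theta$ and then inverting the $2\times 2$ block for $(\pa_r,\pa_\theta)$; these are dual computations and lead to the same asymptotics.
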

\begin{proof}
  Since $1\leq\Delta_0,\Delta_\theta\leq 1+\Lambda\bha^2/3$, we record that
  \begin{equation}
  \label{EqKDiffr0r}
    \frac{r}{r_0} \equiv \sqrt{\frac{\Delta_\theta}{\Delta_0}} \bmod \tau_s^2\CI,
  \end{equation}
  and in particular~$r/r_0,r_0/r\in\CI$. Now, direct calculations give
  \begin{alignat*}{3}
    \pa_{t_0} &= \pa_t-\frac{\Lambda}{3}\bha\pa_\phi, &\quad
    \pa_{r_0} &= \frac{\Delta_\theta r_0}{\Delta_0 r}\pa_r - \frac{\bha^2\sin^2\theta_0\cos\theta_0}{\Delta_0 r^3\sin\theta}\pa_\theta, \\
    \pa_{\phi_0} &= \pa_\phi, &\quad
    \pa_{\theta_0} &= \frac{\bha^2}{\Delta_0 r}\Bigl(1-\frac{\Lambda r_0^2}{3}\Bigr)\cos\theta_0\sin\theta_0\pa_r \\
    &&&\quad\qquad + \frac{r_0\sin\theta_0}{r\sin\theta}\Bigl(1+\frac{\bha^2\cos^2\theta_0}{\Delta_0 r^2}\Bigl(1-\frac{\Lambda r_0^2}{3}\Bigr)\Bigr)\pa_\theta.
  \end{alignat*}
  The main structure of the right column is captured by
  \[
    \begin{pmatrix} \pa_{r_0} \\ \pa_{\theta_0} \end{pmatrix} = \begin{pmatrix} \frac{\Delta_\theta r_0}{\Delta_0 r} & \tau_s^3\CI \\ \tau_s^{-1}\CI & \CI \end{pmatrix} \begin{pmatrix} \pa_r \\ \pa_\theta \end{pmatrix},
  \]
  with the bottom right entry invertible. Therefore,
  \[
    \pa_t=\pa_{t_0}+\frac{\Lambda}{3}\bha\pa_{\phi_0},\quad \pa_\phi=\pa_{\phi_0},\quad
    \begin{pmatrix} \pa_r \\ \pa_\theta \end{pmatrix} = \begin{pmatrix} \frac{\Delta_0 r}{\Delta_\theta r_0} & \tau_s^3\CI \\ \tau_s^{-1}\CI & \CI \end{pmatrix} \begin{pmatrix} \pa_{r_0} \\ \pa_{\theta_0} \end{pmatrix}.
  \]
  Note also that $\rho^2\equiv r_0^2\mod\CI$, hence $2\bhm r_0/\rho^2\equiv 2\bhm/r_0\bmod\tau_s\CI$, and moreover $\Delta_r\equiv-\Lambda r_0^4/3\bmod\tau_s^{-2}\CI$; therefore,
  \[
    c_{\bhm,\bha} \in \Bigl(\frac{2\bhm}{r_0}+\tau_s^3\CI\Bigr)\Bigl(d t_0-\frac{\bha\sin^2\theta_0}{\Delta_0}d\phi_0\Bigr)^2 + \Bigl(\frac{18\bhm}{\Lambda^2 r_0^5}+\tau_s^7\CI\Bigr)d r_0^2.
  \]
  Thus, for instance, we have $r^2(c_{\bhm,\bha})_{r r} \equiv r^2\frac{\Delta_0^2 r^2}{\Delta_\theta^2 r_0^2}\frac{18\bhm}{\Lambda^2 r_0^5}$, which gives the stated result upon using~\eqref{EqKDiffr0r}. The other components are calculated similarly.
\end{proof}

Note that $r\pa_r=-\tau_s\pa_{\tau_s}$ and $r^{-1}\pa_\bullet=\tau_s\pa_\bullet$ for $\bullet=t,\theta,\phi$. Let now $r_+$ be such that $\inf_{\theta\in(0,\pi)}r_0(r_+,\theta)$ is larger than the largest positive real root of $\Delta_r$ (as a function of $r_0$) if one exists, and otherwise fix any $r_+>0$. Define the manifold
\[
  M_{\bhm,\bha,s} := [0,r_+^{-1})_{\tau_s} \times \R_t \times \Sph^2_{\theta\phi}
\]
where $\tau_s=r^{-1}$; then the lemma implies that
\[
  c_{\bhm,\bha} \in \tau_s^3 \CI(M_{\bhm,\bha,s}\cap M_s,S^2\,{}^0 T^*M_s)
\]
on the common domain of definition of the KdS metric and the de~Sitter metric, cf.\ \eqref{Eq0BdSStatic3Mfd}. (We leave it to the reader to check that $c_{\bhm,\bha}\in\tau_s^3\CI$ also at the poles of $\Sph^2$ where the polar coordinates break down.) In view of~\eqref{Eq0BdSStatic3}, we in particular have $g_{\bhm,\bha}\in\CI$ on the common domain of definition.

At $\tau_s=0$, the limits $t\to\infty$ and $t\to-\infty$ correspond to $\tilde R\to 0$ and $\tilde R\to\infty$, respectively, in the upper half space coordinates~\eqref{Eq0BdSNoncpt}. Therefore, as in the SdS case, $g_{\bhm,\bha}$ is defined in a neighborhood of $U_{p_0}:=\Sph^3\setminus\{p_0,-p_0\}$ where $p_0=(1,0,0,0)\in\Sph^3\subset\R^4$ is the point defined by $\tilde R=0$ inside $\tilde\tau=0$; it describes a KdS black hole rotating in $p_0^\perp$ around an axis, which we fix to be
\[
  \hat\bha_0=(0,0,0,1),
\]
with specific angular momentum $\bha$.

We wish to define KdS metrics located at other points on the future conformal boundary $\pa M$. To this end, we use a parameterization of the KdS family by triples
\begin{equation}
\label{EqKParam}
  (p,\bhm,\fa),\qquad p\in\Sph^3,\ \bhm\in\R,\ \fa\in\so_4,\ \fa p=0;
\end{equation}
here, we identify $\so_4$ both with $\{A\in\R^{4\times 4}\colon A+A^T=0\}$ and the space $\mathfrak{iso}_3$ of Killing vector fields on $\Sph^3$ where $A\in\R^{4\times 4}$ corresponds to the vector field $\frac{d}{d s}e^{s A}|_{s=0}$ on the unit sphere $\Sph^3\subset\R^4$. Thus, viewing $\fa\in\so_4$ as a matrix, the condition $\fa p=0$ means that $p\in\Sph^3\subset\R^4$ lies in its kernel; viewing $\fa\in\mathfrak{iso}_3$, it means that the vector field $\fa$ vanishes at $p$.

\begin{definition}
\label{DefKParam}
  We call a triple $(p,\bhm,\fa)\in\Sph^3\times\R\times\so_4$ \emph{admissible} if $\fa p=0$.
\end{definition}

For $\fa,\fa_1,\fa_2\in\so_4$, viewed as $4\times 4$ matrices, we denote
\begin{equation}
\label{EqKInnerProd}
  \la\fa_1,\fa_2\ra := \half\sum_{i,j=1}^4 (\fa_1)_{i j}(\fa_2)_{i j} = \sum_{i<j} (\fa_1)_{i j}(\fa_2)_{i j},\qquad
  |\fa|^2 := \la\fa,\fa\ra.
\end{equation}
(Invariantly, $\la\cdot,\cdot\ra$ is $(-\half)$ times the Killing form on $\so_4$.) Given an admissible triple $(p,\bhm,\fa)$, we define the metric
\begin{equation}
\label{EqKParamMetric}
  g_{p,\bhm,\fa}
\end{equation}
as a smooth Lorentzian 0-metric near $U_p=\pa M\setminus\{p,-p\}$ as follows. First, if $\fa=0$, we let $g_{p,\bhm,0}=g_{p,\bhm}$ be equal to the SdS metric with mass $\bhm$ centered at $p$, as defined in the paragraph leading up to~\eqref{EqSParamMetric}. Otherwise, $\fa\in\R^{4\times 4}$ induces a nontrivial skew-adjoint linear transformation on $p^\perp\subset\R^4$, equivalently a rotation vector field, around an axis $\hat\bha\in p^\perp$, $|\hat\bha|=1$, with amplitude $\bha:=|\fa|$. Choose then an element $R_1\in SO(4)$ with $R_1 p=p_0$, and then (noting that $R_1\hat\bha\in p_0^\perp$) an element $R_2\in SO(4)$ with $R_2 p_0=p_0$ so that $R_2(R_1\hat\bha)=\hat\bha_0$; put $R=R_2 R_1$. Note that the properties $R p=p_0$ and $R\hat\bha=\hat\bha_0$ determine $R$ uniquely up to multiplication from the left by a rotation fixing $p_0$ and $\hat\bha_0$, thus a rotation $\phi\mapsto\phi+\phi'$ for some $\phi'\in\R$---which is an isometry of $g_{\bhm,\bha}$. We then define $g_{p,\bhm,\bha}$ as the pullback of $g_{\bhm,\bha}$ along the map $M\to M$, $(\tau,\psi)\mapsto(\tau,R(\psi))$. In particular, this parameterizes $g_{\bhm,\bha}$ as
\begin{equation}
\label{EqKParamMetric0}
  g_{\bhm,\bha} = g_{p_0,\bhm,\fa_0},\quad \fa_0 = \begin{pmatrix} 0 & 0 & 0 & 0 \\ 0 & 0 & \bha & 0 \\ 0 & -\bha & 0 & 0 \\ 0 & 0 & 0 & 0 \end{pmatrix}.
\end{equation}

\subsection{Gluing theorem}

With the KdS metrics $g_{\bhm,p,\fa}$ defined as in~\S\ref{SsKP}, we are ready to state the gluing theorem, which holds subject to a balance condition generalizing Definition~\ref{DefSBalance}; it involves the \emph{effective mass} of an admissible triple $b=(p,\bhm,\fa)$, defined as
\[
  \bhm_{\rm eff}(b) := \frac{\bhm}{(1+\Lambda|\fa|^2/3)^2}.
\]

\begin{definition}
\label{DefKBalance}
  Let $N\in\N$. We say that a collection $\{b_1,\ldots,b_n\}$ of admissible triples $b_i=(p_i,\bhm_i,\fa_i)$ is \emph{balanced} if the $p_i$ are pairwise distinct and if, regarding $\Sph^3$ as the unit sphere inside $\R^4$, the following relations hold:
  \begin{subequations}
  \begin{align}
    \sum_{i=1}^N \bhm_{\rm eff}(b_i) p_i &= 0 \in \R^4, \\
    \sum_{i=1}^N \bhm_{\rm eff}(b_i)\fa_i &= 0 \in \so_4\subset\R^{4\times 4}.
  \end{align}
  \end{subequations}
\end{definition}

\begin{thm}
\label{ThmKGlue}
  Let $N\in\N$, and suppose $\{b_1,\ldots,b_N\}\subset\Sph^3\times\R\times\so_4$ is balanced, $b_i=(p_i,\bhm_i,\fa_i)$. Suppose $V_{p_i}\subset U_{p_i}$ is a ball around $p_i$ with the point $p_i$ removed, and suppose $\ol{V_{p_i}}\cap\ol{V_{p_j}}=\emptyset$ for $i\neq j$. Then there exist a neighborhood $U$ of $\pa M\setminus\{p_1,\ldots,p_N\}$ and a Lorentzian 0-metric $g\in\CI(U;S^2\,{}^0 T_U^*M)$ with the following properties:
  \begin{enumerate}
  \item\label{ItKGlueEin} $g$ satisfies the Einstein vacuum equation $\Ric(g)-\Lambda g=0$;
  \item\label{ItKGlueKdS} near $V_{p_i}$, we have $g=g_{p_i,\bhm_i,\fa_i}$;
  \item\label{ItKGluePert} $g$ is $\cO(\tau^3)$-close to the de~Sitter metric: $g-g_\dS\in\tau^3\CI(U;S^2\,{}^0 T_U^*M)$.
  \end{enumerate}
\end{thm}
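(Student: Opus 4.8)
The plan is to mirror the SdS argument of \S\ref{SsSO}--\S\ref{SsSC} essentially verbatim, the only genuinely new input being the identification of the obstruction to the leading-order correction with the two balance relations of Definition~\ref{DefKBalance}. Let $\chi_i\in\CI(\pa M)$ be cutoffs equal to $1$ near $\ol{V_{p_i}}$ with disjoint supports, set $\chi_0:=1-\sum_i\chi_i$, and form the naively glued metric $g_0:=\chi_0 g_\dS+\sum_{i=1}^N\chi_i g_{p_i,\bhm_i,\fa_i}$. By Lemma~\ref{LemmaKDiff} and \eqref{Eq0BdSStatic3} we have $g_{p_i,\bhm_i,\fa_i}-g_\dS\in\tau^3\CI$ away from $p_i$, hence $g_0-g_\dS\in\tau^3\CI$ away from the $p_i$; and exactly as in the proof of Lemma~\ref{LemmaSO1}, since $g_{\bhm,\bha}$ and $g_\dS$ both solve \eqref{EqIEin}, Lemma~\ref{Lemma0EEin} gives $L_0(g_{\bhm,\bha}-g_\dS)\in\tau_s^6\CI$, so the leading ($\tau_s^3$) coefficient $\gamma_3$ of $g_{\bhm,\bha}-g_\dS$ lies in $\ker I(L_0,3)$. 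Then by Lemma~\ref{Lemma0EEin} and the commutator identity \eqref{EqSO1Calc}, $P_0(g_0)\in\tau^4\CI$ is supported away from the $p_i$, and its normal-tangential $\tau^4$-coefficient $\Err_{NT}$ is a fixed nonzero multiple of $\sum_i[\delta_h,\chi_i]$ applied to the trace-free tangential-tangential part of $g_{p_i,\bhm_i,\fa_i}-g_\dS$, which is read off from Lemma~\ref{LemmaKDiff} (and which, notably, is a perfect square $\propto(dt-\bha\sin^2\theta_0\,d\phi)^2$ modulo a pure-trace term).

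The heart of the proof is the solvability criterion \eqref{EqSOIndSolvCrit}: that $\ell:=\sum_i\ell_{b_i}$ vanish in $(\mathfrak{conf}_3)^*$, where $\ell_b\colon V\mapsto\int_{\Sph^3}V(\Err^{(b)}_{NT})\,|d g_{\Sph^3}|$ is the functional produced by a single black hole $b=(p,\bhm,\fa)$. The claim, generalizing \eqref{EqSOIndSolvCrit3}, is that for universal constants $c_0,c_1\neq 0$, with $q\in\R^4$ and $\fb\in\so_4=\mathfrak{iso}_3$,
\[
  \ell_b(S_q)=c_0\,\bhm_{\rm eff}(b)\,\la p,q\ra,\qquad
  \ell_b(\fb)=c_1\,\bhm_{\rm eff}(b)\,\la\fa,\fb\ra .
\]
Conjugating by the rotation $R\in SO(4)$ used to define $g_{p,\bhm,\fa}$ (an isometry of $g_{\Sph^3}$ with $R p=p_0$ and $R\fa R^{-1}$ the standard axis-aligned generator of amplitude $|\fa|$, and under which the splitting $\mathfrak{conf}_3=\mathfrak{scal}_3\oplus\mathfrak{iso}_3$ is equivariant) reduces everything to $b=(p_0,\bhm,\fa_0)$. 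There $\ell_b(S_{p_0})=c\,\ell_b(\pa_t)$ for a nonzero constant $c$ (as in \S\ref{SsSO}, $S_{p_0}$ is a constant multiple of $\pa_t$), while $\ell_b(\pa_\phi)$ is computed directly; all remaining pairings vanish by the symmetries of $g_{\bhm,\bha}$, namely invariance under $\pa_\phi$-rotations, under $\theta\mapsto\pi-\theta$ (i.e.\ $\hat\bha_0\mapsto-\hat\bha_0$), and under $(t,\phi)\mapsto(-t,-\phi)$. The two surviving $\theta$-integrals reduce to elementary integrals of the form $\int_{-1}^1(1+u(1-c^2))^{-k/2}\,d c$ with $u=\Lambda|\fa|^2/3$, which (using $\sin^2\theta_0=\Delta_\theta\sin^2\theta$ at $\pa M$) evaluate to $\tfrac43(1+u)^{-2}$, producing precisely the factor $(1+\Lambda|\fa|^2/3)^{-2}$ and hence $\bhm_{\rm eff}(b)$; one also checks $c_0,c_1\neq 0$ by an explicit $\int\chi'(t)\,d t$. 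Summing, $\ell(S_q+\fb)=c_0\la\sum_i\bhm_{\rm eff}(b_i)p_i,q\ra+c_1\la\sum_i\bhm_{\rm eff}(b_i)\fa_i,\fb\ra$, which vanishes identically if and only if $\{b_1,\dots,b_N\}$ is balanced in the sense of Definition~\ref{DefKBalance}.

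Granted $\ell=0$, the remainder runs on autopilot. As in the proof of Proposition~\ref{PropSO}, Delay's theorem (Theorem~\ref{ThmSODelay}) on $(\Sph^3,h)$ yields a trace-free $k$, with $\supp k$ in a connected open set disjoint from the $\ol{V_{p_i}}$, solving \eqref{EqSODivEq}; putting $\tilde g_0=(0,0,0,k)\in\ker I(L_0,3)$ in the splitting \eqref{Eq0ESplitTr} gives the improved background $g^0=g_0+\tilde g_0$ with $P_0(g^0)\in\tau^5\CI$, vanishing near $\bigcup_i\ol{V_{p_i}}$. Then the gauge fixing of Definition~\ref{DefSG} with background $g^0$, the Borel-summation construction of a formal solution $g_1=g^0+\tilde g_1$ (Proposition~\ref{PropSGFormal}, using only invertibility of $I(L_{g_\dS,g_\dS},m)$ for $m\geq 5$ from \eqref{EqSGIndRoots}), the backward solution near $\tau=0$ of the quasilinear wave equation $P(g_1+\tilde g_2;g^0)=0$ (Proposition~\ref{PropSCNonlinear}, via the $\tau^{-2N}\tau\pa_\tau$ energy estimate), and the propagation of $\Ups(g;g^0)=0$ through the second Bianchi identity and a unique continuation argument at $\pa M$ (Lemma~\ref{LemmaSCUniq}) all apply word for word, since each of these steps uses only the indicial structure of $L_{g_\dS}$ and the fact that $g^0-g_\dS\in\tau^3\CI$, both of which persist here. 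This produces $g$ with the three asserted properties.

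The main obstacle is the computation of the second paragraph. In the SdS case $\Err_{NT}$ was exact, hence automatically orthogonal to every Killing field, so no angular-momentum condition appeared; for KdS the cross term $(c_{\bhm,\bha})_{t\phi}$ of Lemma~\ref{LemmaKDiff} contributes a genuinely non-closed piece to $\Err_{NT}$, whose pairing with the rotation $\pa_\phi$ (and, by $SO(4)$-equivariance, with $\fb$ for general $\fb\in\so_4$) has to be evaluated and shown to be a nonzero multiple of $\bhm_{\rm eff}(b)\la\fa,\fb\ra$ rather than of $\bhm\la\fa,\fb\ra$. Tracking the powers of $\Delta_0=1+\Lambda|\fa|^2/3$ and $\Delta_\theta$ through Lemma~\ref{LemmaKDiff}, and through the identity $\sin^2\theta_0=\Delta_\theta\sin^2\theta$ on $\pa M$, to see that it is the \emph{effective} mass that enters \emph{both} balance relations, is the one place where the argument is not purely formal. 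A secondary but routine check is that $\gamma_3$ for KdS indeed lies in $\ker I(L_0,3)$ (which the abstract argument $L_0(g_{\bhm,\bha}-g_\dS)\in\tau_s^6\CI$ already guarantees) and that the $\cO(\tau_s^4)$ remainder of $g_{\bhm,\bha}-g_\dS$ feeds through $I(L_0,4)$ only into components of $P_0(g_0)$ other than the normal-tangential one, so that the obstruction analysis is unaffected.
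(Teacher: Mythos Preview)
Your argument has a genuine gap at the step ``putting $\tilde g_0=(0,0,0,k)$ \dots\ gives the improved background $g^0=g_0+\tilde g_0$ with $P_0(g^0)\in\tau^5\CI$''. This is false in the KdS case. Unlike SdS, the leading coefficient $\gamma_3$ of $g_{\bhm,\bha}-g_\dS$ has a nonzero normal--tangential component (coming from $(c_{\bhm,\bha})_{r\theta}\in\tau_s^3\CI$ in Lemma~\ref{LemmaKDiff}), so the commutator $[I(L_0[\tau],3),\chi]\gamma_3$ produces, via the \emph{third} column of \eqref{EqSIndlot3}, nonzero $NN$, $TT1$ and $TT0$ components of the $\tau^4$ error in addition to the $NT$ component. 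Your Delay correction $(0,0,0,k)$ lies in the first summand of $\ker I(L_0,3)$ and, by the first column of \eqref{EqSIndlot3}, affects only the $NT$ component; the other components of the $\tau^4$ error survive untouched. You acknowledge their existence in your last paragraph but then do nothing with them, and they cannot be absorbed into the gauge-fixed Borel iteration of Proposition~\ref{PropSGFormal}: that step needs $P(g^0;g^0)\in\tau^5\CI$ to start, and one cannot begin at $m=4$ because $4$ is an indicial root of $L_{g_\dS,g_\dS}$ by \eqref{EqSGIndRoots}.

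The paper closes this gap by an explicit check (the second half of Lemma~\ref{LemmaKO1}): the surviving non-$NT$ error at $\tau^4$ is of the form $(3a,0,a,k_0)$ with $\tr_h k_0=0$, which lies in $\ran I(L_0,4)$ (span of $(1,0,\tfrac13,0)$ and $\ker\tr_h$). Hence one can solve it away \emph{pointwise} on $\pa M$ by an additional $\tau^4$ metric correction $\tilde c$, supported in $\bigcup_i\supp d\chi_i$. Only after adding this $\tau^4$ piece does one have $P_0(g^0)\in\tau^5\CI$, and the rest of your outline then goes through. Also, a minor correction: the non-$NT$ error does not arise from ``the $\cO(\tau_s^4)$ remainder of $g_{\bhm,\bha}-g_\dS$ feeding through $I(L_0,4)$'' (that contribution cancels against $I(L_0[\tau],3)\gamma_3$ by the exact relation $I(L_0[\tau],3)\gamma_3+I(L_0,4)\gamma_4=0$); it comes from the commutator with $\chi$ acting on $(\gamma_3)_{NT}$.
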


In the special case that $b_i=(p_i,\bhm_i,0)$ for all $i$, this reduces to Theorem~\ref{ThmSGlue}.

\begin{rmk}
\label{RmkKBH}
  A remark analogous to Remark~\ref{RmkSBH} applies also in the Kerr--de~Sitter setting: if the black hole parameters are subextremal, one can extend the glued Kerr--de~Sitter metrics across their cosmological and event horizons. See e.g.\ \cite[\S3.2]{HintzVasyKdSStability}. For small masses, the domain of existence of $g$ can be shown to include the interaction of several black holes by following the arguments at the end of~\S\ref{SsSC}.
\end{rmk}

\begin{rmk}
  If one passes to the upper half space model $M_u$, there are no obstructions to gluing anymore, analogously to Theorem~\ref{ThmSNc}.
\end{rmk}

The main part of the proof of Theorem~\ref{ThmKGlue} is the calculation of the obstruction for solving the divergence equation~\eqref{EqSODivEq}. First, we compute the failure of the Einstein equation for a naive gluing of a single KdS black hole. Let $P_0=2(\Ric-\Lambda)$ and $L_0=D_{g_\dS}P_0$ as in~\eqref{EqSLinOp}.

\begin{lemma}
\label{LemmaKO1}
  Let $\chi\in\CI(\R_t)$ be identically $1$ for large $t$, and put $g_0=\chi(t)g_{\bhm,\bha}+(1-\chi(t))g_\dS$. With $P_0=2(\Ric-\Lambda)$ as in~\eqref{Eq0EEinOp}, we then have $P_0(g_0)=\tau_s^4\Err_{s,0}\bmod\tau_s^5\CI$, where $\Err_{s,0}=2\frac{d\tau_s}{\tau_s} \otimes_s \frac{(\Err_{s,0})_{N T}}{\tau_s}+\Err'_{s,0}$ with
  \[
    (\Err_{s,0})_{N T} = \frac{18\bhm}{\Lambda\Delta_0^2}\Delta_\theta\sqrt{\frac{\Delta_\theta}{\Delta_0}}\chi'(t)\Bigl(\bigl(\Delta_\theta-\tfrac13\Delta_0\bigr)d t-\bha\sin^2\theta_0\,d\phi\Bigr),
  \]
  and $\Err'_{s,0}=\tau_s^4 I(L_0,4)\tilde c$ for some $\tilde c\in\CI(\pa M_s;S^2\,{}^0 T_{\pa M_s}^* M_s)$ with $\supp\tilde c\subset\supp d\chi$.
\end{lemma}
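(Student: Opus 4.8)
The plan is to mimic the structure of the proof of Lemma~\ref{LemmaSO1}, now using the more refined expansion of $c_{\bhm,\bha}=g_{\bhm,\bha}-g_\dS$ supplied by Lemma~\ref{LemmaKDiff}. Since both $g_{\bhm,\bha}$ and $g_\dS$ solve the Einstein vacuum equation, $P_0(g_0)$ is supported in $\{\chi\neq 0,1\}=\supp d\chi$, and by Lemma~\ref{Lemma0EEin} together with the fact that $c_{\bhm,\bha}\in\tau_s^3\CI$, we have $P_0(g_0)\equiv L_0(\chi\,c_{\bhm,\bha})\bmod\tau_s^6\CI$. Writing $c_{\bhm,\bha}=\tau_s^3 c_3+\tau_s^4 c_4+\cO(\tau_s^5)$ with $c_3,c_4$ $\tau_s$-independent in the bundle splitting~\eqref{Eq0ESplitTr} adapted to $h_s$, and using that $L_0 c_{\bhm,\bha}\in\tau_s^6\CI$ (since $c_{\bhm,\bha}$ is itself an exact difference of Einstein metrics), one gets $I(L_0,3)c_3=0$ and $I(L_0[\tau_s],3)c_3+I(L_0,4)c_4=0$. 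Hence modulo $\tau_s^5\CI$,
\[
  P_0(g_0)\equiv\tau_s^4\bigl(I(L_0[\tau_s],3)(\chi c_3)+I(L_0,4)(\chi c_4)\bigr)=\tau_s^4\bigl[I(L_0[\tau_s],3),\chi\bigr]c_3+\tau_s^4 I(L_0,4)\bigl((\chi-1)c_4\bigr)\cdot 0,
\]
so that the only genuinely new contribution beyond the commutator term is $\tau_s^4 I(L_0,4)\tilde c$ with $\tilde c$ supported in $\supp d\chi$; this already identifies $\Err'_{s,0}$. The commutator $[I(L_0[\tau_s],3),\chi]c_3$ is, by~\eqref{EqSIndlot3}, governed by $[\delta_{h_s},\chi]=-\iota_{\nabla^{h_s}\chi}$ acting on the trace-free tangential-tangential part $(c_3)_{T T 0}$, and lands entirely in the normal-tangential slot; this gives the structural form $\Err_{s,0}=2\frac{d\tau_s}{\tau_s}\otimes_s\tau_s^{-1}(\Err_{s,0})_{N T}+\Err'_{s,0}$.

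The concrete formula for $(\Err_{s,0})_{N T}$ then reduces to two ingredients. First, I need the trace-free tangential-tangential part of $c_3$, i.e.\ the $r^{-3}=\tau_s^3$ coefficient of the $(t,t)$, $(t,\phi)$, $(\phi,\phi)$, $(\theta,\theta)$ components of $c_{\bhm,\bha}$ expressed relative to $h_s$; these are exactly the entries read off from Lemma~\ref{LemmaKDiff} (with $(c_{\bhm,\bha})_{\theta\theta}\in\tau_s^5\CI$ negligible at this order, and $(c_{\bhm,\bha})_{r\theta}\in\tau_s^3\CI$ contributing only to the already-absorbed $\Err'$ via $I(L_0,4)$, not to the commutator term which only sees the $TT0$ block). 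Second, I need $\nabla^{h_s}\chi$; since $\chi=\chi(t)$ and the boundary metric induced in the cosmological-region coordinates is $h_s=(\Lambda^2/9)d t^2+(\Lambda/3)g_{\Sph^2}$ (cf.~\eqref{Eq0BdSStaticBdy}), we have $\nabla^{h_s}\chi=(9/\Lambda^2)\chi'(t)\pa_t$. Contracting the trace-free tangential-tangential tensor built from $c_3$ against this vector field, and carefully tracking the $(\Delta_\theta/\Delta_0)^{1/2}$ and $\Delta_\theta/\Delta_0^2$ factors from Lemma~\ref{LemmaKDiff}, yields a normal-tangential 1-form proportional to $\chi'(t)\bigl((\Delta_\theta-\tfrac13\Delta_0)d t-\bha\sin^2\theta_0\,d\phi\bigr)$; matching the overall constant $\frac{18\bhm}{\Lambda\Delta_0^2}\Delta_\theta\sqrt{\Delta_\theta/\Delta_0}$ is a bookkeeping exercise. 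The $(\Delta_\theta-\tfrac13\Delta_0)$ combination in the $dt$-slot is the signature of having subtracted the trace (the pure-trace piece of $(c_3)_{T T}$ drops out under the commutator because, as in Lemma~\ref{LemmaSO1}, the relevant block of $I(L_0[\tau_s],3)$ only acts on the trace-free part), and the $\theta$-dependence survives because $c_3$ is not spherically symmetric when $\bha\neq 0$.

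I expect the main obstacle to be purely computational: correctly converting the coordinate-basis components of $c_{\bhm,\bha}$ from Lemma~\ref{LemmaKDiff} into the orthonormal-type splitting~\eqref{Eq0ESplitTr} relative to $h_s$, extracting the trace-free tangential-tangential part, and then applying $\iota_{\nabla^{h_s}\chi}$ while keeping every factor of $\Delta_0$, $\Delta_\theta$, and $r/r_0\equiv\sqrt{\Delta_\theta/\Delta_0}$ straight. A secondary subtlety is justifying that $(c_{\bhm,\bha})_{r\theta}\in\tau_s^3\CI$ and $(c_{\bhm,\bha})_{\theta\theta}\in\tau_s^5\CI$ do not alter $(\Err_{s,0})_{N T}$: the $r\theta$ component is a normal-tangential entry of $c_3$ itself, which lies in $\ker I(L_0,3)$ up to the tangential-tangential trace-free block, so the commutator with $\chi$ only reshuffles it within the $\tau_s^4 I(L_0,4)\tilde c$ term, consistent with absorbing it into $\Err'_{s,0}$. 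Everything else—the vanishing of $(c_{\bhm,\bha})_{rt}$, $(c_{\bhm,\bha})_{r\phi}$, $(c_{\bhm,\bha})_{t\theta}$, $(c_{\bhm,\bha})_{\theta\phi}$—directly simplifies the commutator computation. The conclusion that $\supp\tilde c\subset\supp d\chi$ is immediate since $P_0(g_0)$ is supported there and $\Err_{s,0}$ inherits this support.
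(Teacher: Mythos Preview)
Your approach is essentially the paper's, and the computation of $(\Err_{s,0})_{NT}$ via the commutator $[I(L_0[\tau_s],3),\chi]$ acting on $(c_3)_{TT0}$ is exactly right. But there is a genuine gap in your treatment of $\Err'_{s,0}$.

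First, your displayed equation is garbled: the term ``$\tau_s^4 I(L_0,4)\bigl((\chi-1)c_4\bigr)\cdot 0$'' makes no sense. The correct reduction is simply
\[
  I(L_0[\tau_s],3)(\chi c_3)+I(L_0,4)(\chi c_4)=[I(L_0[\tau_s],3),\chi]c_3+\chi\bigl(I(L_0[\tau_s],3)c_3+I(L_0,4)c_4\bigr)=[I(L_0[\tau_s],3),\chi]c_3,
\]
so \emph{all} of $\Err_{s,0}$ comes from the commutator; there is no separate ``genuinely new contribution'' of the form $I(L_0,4)\tilde c$ popping out of the algebra. The non-$NT$ components of $\Err_{s,0}$ are precisely the pieces of $[I(L_0[\tau_s],3),\chi]c_3$ produced by the \emph{third} column of~\eqref{EqSIndlot3} acting on $(c_3)_{NT}$ (the $r\theta$ part you flag), namely
\[
  \bigl(-4[\delta_{h_s},\chi](c_3)_{NT},\,0,\,-\tfrac{4}{3}[\delta_{h_s},\chi](c_3)_{NT},\,-2[\delta_{h_s,0}^*,\chi](c_3)_{NT}\bigr).
\]
The statement of the lemma asserts that this equals $I(L_0,4)\tilde c$ for some $\tilde c$; you assert this is ``consistent with absorbing it into $\Err'_{s,0}$'' but never check it. This is not automatic: you must verify that the range of the third column of~\eqref{EqSIndlot3} is contained in $\ran I(L_0,4)$. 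The paper does exactly this, computing from Corollary~\ref{Cor0EInd} that
\[
  3\Lambda^{-1}I(L_0,4)=\begin{pmatrix}6&0&-24&0\\0&0&0&0\\2&0&-8&0\\0&0&0&4\end{pmatrix},
\]
whose range is spanned by $(1,0,\tfrac13,0)$ and $\ker\tr_{h_s}$, and then observing that the third column of~\eqref{EqSIndlot3} takes values in exactly this span (the $NN$ and $TT1$ entries are in ratio $3:1$, and the $TT0$ entry is trace-free). Without this range check the identification $\Err'_{s,0}=I(L_0,4)\tilde c$ is unjustified, and that identification is precisely what is used downstream in the proof of Theorem~\ref{ThmKGlue} to solve away $\Err'_{s,0}$ pointwise with a $\tau_s^4$ correction.
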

\begin{proof}
  Recall from~\eqref{Eq0BdSStaticBdy} the metric $h_s=(\Lambda^2/9)d t^2+(\Lambda/3)g_{\Sph^2}$ induced on the boundary $\pa M_s$ by $g_\dS$ and the boundary defining function $r^{-1}$. In the splitting~\eqref{Eq0ESplit} with $h,\tau$ replaced by $h_s,\tau_s$, the leading order components of $c_{\bhm,\bha}=g_{\bhm,\bha}-g_\dS$ are then, by Lemma~\ref{LemmaKDiff},
  \begin{align*}
    (\gamma_3)_{N N} &= \bigl(r^3\cdot r^2(c_{\bhm,\bha})_{r r}\bigr)|_{\tau_s=0} = \Bigl(\frac{\Delta_\theta}{\Delta_0}\Bigr)^{3/2}\frac{18\bhm}{\Lambda^2}, \\
    (\gamma_3)_{T T} &= r^3\cdot\Bigl(r^2 \bigl((c_{\bhm,\bha})_{t t}\,d t^2+2(c_{\bhm,\bha})_{t\phi}\,d t\,d\phi+(c_{\bhm,\bha})_{\phi\phi}\,d\phi^2\bigr)\Bigr)\Big|_{\tau_s=0},
  \end{align*}
  and $(\gamma_3)_{N T}$ is a smooth 1-form on $\R_t\times\Sph^2$ whose precise form we do not need.

  Since $0=P_0(g_{\bhm,\bha})\equiv L_0(c_{\bhm,\bha}) \bmod \tau_s^6\CI$ as in the proof of Lemma~\ref{LemmaSO1}, and since $c_{\bhm,\bha}\equiv r^{-3}\gamma_3\bmod\tau_s^4\CI$, we conclude that $I(L_0,3)\gamma_3=0$. In view of~\eqref{EqSInd3}, this implies the relationship $(\gamma_3)_{N N}=\tr_{h_s}(\gamma_3)_{T T}$ (using that $3=\tr_{h_s}h_s$).\footnote{This can also be checked directly. Indeed, the equality of $\tr_{h_s}(\gamma_3)_{T T}=\frac{9}{\Lambda^2}(\gamma_3)_{t t}+\frac{3}{\Lambda}\sin^{-2}\theta\,(\gamma_3)_{\phi\phi}$ and $(\gamma_3)_{N N}$ is equivalent to $\Delta_\theta^2+\sin^{-2}\theta\,\frac{\Lambda}{3}\bha^2\sin^4\theta_0=\Delta_\theta\Delta_0$ and thus to $\Delta_\theta=\frac{\sin^2\theta_0}{\sin^2\theta}$; this is easily verified by plugging in $\sin^2\theta=1-\frac{r_0^2}{r^2}\cos^2\theta_0=1-\frac{\Delta_0}{\Delta_\theta}\cos^2\theta_0$, which holds at $\tau_s=0$.} Therefore, the trace-free part $(\gamma_3)_{T T 0}$ in the refined splitting~\eqref{Eq0ESplitTr} is given by
  \[
    (\gamma_3)_{T T 0} = (\gamma_3)_{T T} - \frac{1}{3}(\gamma_3)_{N N}h_s.
  \]
  By following the calculation~\eqref{EqSO1Calc}, the normal-tangential component of $\Err_{s,0}$ is thus
  \[
    (\Err_{s,0})_{N T} = -\Lambda\cdot 9\Lambda^{-2}\chi'(t)\cdot\bigl(-\iota_{\pa_t}(\gamma_3)_{T T 0}\bigr),
  \]
  which we can compute by means of Lemma~\ref{LemmaKDiff}.

  Regarding the remaining components of $\Err_{s,0}$, we note that they lie in the range of the third column of the operator~\eqref{EqSIndlot3}. But by Corollary~\ref{Cor0EInd}, we have
  \[
    3\Lambda^{-1}I(L_0,4) = \begin{pmatrix} 6 & 0 & -24 & 0 \\ 0 & 0 & 0 & 0 \\ 2 & 0 & -8 & 0 \\ 0 & 0 & 0 & 4 \end{pmatrix},
  \]
  whose range is thus spanned by $(1,0,\tfrac13,0)$ and $\ker\tr_h$, and hence contains the range of the third column of~\eqref{EqSIndlot3}.
\end{proof}

Since the components of $\Err_{s,0}$ other than the normal-tangential component can thus be solved away pointwise on $\pa M_s$ (modulo one order down, i.e.\ modulo $\tau_s^5\CI$) with a $\tau_s^4\CI$ correction, the only obstruction for gluing is again the integral of $(\Err_{s,0})_{N T}$ against conformal Killing vector fields on $(\pa M_s,h_s)=(\R_t\times\Sph^2,h_s)$ as in~\eqref{EqSOIndSolvCrit}. The volume density in these integrals is
\[
  |d h_s| = \frac{\Lambda^2}{9}d t\,\sin\theta\,d\theta\,d\phi = \frac{\Lambda^2}{9}\sqrt{\frac{\Delta_0}{\Delta_\theta}}\Delta_\theta^{-1}\,d t_0\,\sin\theta_0\,d\theta_0\,d\phi_0,
\]
since at $\tau_s=0$ we have, using~\eqref{EqKDiffr0r},
\begin{align*}
  \sin\theta\,d\theta&=-d(\cos\theta)=-d\Bigl(\frac{r_0}{r}\cos\theta_0\Bigr) = -d\Bigl(\sqrt{\frac{\Delta_0}{\Delta_\theta}}\cos\theta_0\Bigr) \\
    &= \Bigl(\sin\theta_0\sqrt{\frac{\Delta_0}{\Delta_\theta}} - \frac12 \cos\theta_0\frac{\Delta_0^{1/2}}{\Delta_\theta^{3/2}}\cdot 2\frac{\Lambda}{3}\bha^2\cos\theta_0\sin\theta_0\Bigr)d\theta_0
    = \sqrt{\frac{\Delta_0}{\Delta_\theta}}\Delta_\theta^{-1}\sin\theta_0\,d\theta_0.
\end{align*}
By Lemma~\ref{LemmaKO1}, we therefore have, for $V\in\cV(\pa M_s)$,
\begin{align}
  \ell_{\bhm,\bha}(V) &:= \int_{\pa M_s} V(\Err_{s,0})_{N T}\,|d h_s| \nonumber\\
\label{EqKOInt}
    &\ =2\Lambda\bhm_{\rm eff}\int_0^\infty\int_0^{2\pi} \int_0^\pi \chi'(t_0) V\bigl((\Delta_\theta-\tfrac13\Delta_0)d t-\bha\sin^2\theta_0\,d\phi\bigr)\sin\theta_0\,d\theta_0\,d\phi_0\,d t_0,
\end{align}
where $\bhm_{\rm eff}:=\bhm/\Delta_0^2$ is the effective mass of the triple $(p_0,\bhm,\bha)$. Particular conformal Killing vector fields $V$ on $(\pa M_s,h_s)$ include $\pa_t$ and $\pa_\phi$, and we compute
\[
  \ell_{\bhm,\bha}(\pa_t) = -\frac{16\pi\Lambda\bhm_{\rm eff}}{3},\quad
  \ell_{\bhm,\bha}(\pa_\phi) = \frac{16\pi\Lambda\bhm_{\rm eff}\bha}{3}.
\]
If $V$ is a rotation around an axis orthogonal to that corresponding to $\pa_\phi$, then the integrand in~\eqref{EqKOInt} vanishes pointwise, hence $\ell_{\bhm,\bha}(V)=0$ in this case. Passing to the boundary $\Sph^3$ of global de~Sitter space, with the KdS black hole sitting at the point $p_0=(1,0,0,0)\in\Sph^3$, we have $\pa_t=S_{C p_0}$ in the notation~\eqref{EqSConfKillSph} for some constant $C>0$ (only depending on $\Lambda$), while the rotations on the $\Sph^2$-factor of $\pa M_s$ which we considered above span the set $(\so_4)_{p_0}$ of rotations on $\Sph^3$ keeping $p_0$ fixed.

Consider rotations $V\in\so_4$ which are orthogonal to $(\so_4)_{p_0}$ with respect to the inner product $\la\cdot,\cdot\ra$ defined in~\eqref{EqKInnerProd}; the 3-dimensional space of such $V$ is spanned by rotation vector fields $R_j$, $j=2,3,4$, which are, say, 90 degree rotations in the planes determined by $p_0=(1,0,0,0)$ and $\hat e_2=(0,1,0,0)$, $\hat e_3=(0,0,1,0)$, $\hat e_4=(0,0,0,1)$, respectively, and which keep the orthogonal complement of $\mathspan\{p_0,\hat e_j\}$ in $\R^4$ fixed. But then the integrand in~\eqref{EqKOInt}, for each $j=2,3,4$, is odd either with respect to the reflection $\theta_0\mapsto\pi-\theta_0$ or with respect to the rotation $\phi_0\mapsto\phi_0+\pi$, hence $\ell_{\bhm,\bha}(R_j)=0$. A similar symmetry argument shows that $\ell_{\bhm,\bha}(S_{\hat e_j})=0$ for $j=2,3,4$.

Note now that $\pa_\phi$, written as a rotation matrix (rotating in the plane spanned by $\hat e_1$ and $\hat e_2$, while leaving the span of $p_0$ and $\hat e_3$ fixed), is given by
\[
  \pa_\phi = \begin{pmatrix} 0 & 0 & 0 & 0 \\ 0 & 0 & 1 & 0 \\ 0 & -1 & 0 & 0 \\ 0 & 0 & 0 & 0 \end{pmatrix}.
\]
By comparison with~\eqref{EqKParamMetric0}, we can thus summarize our calculations by
\begin{align*}
  \ell_{\bhm,\bha}(S_q) &= C_0\bhm_{\rm eff}\la p_0,q\ra,\quad q\in\R^4, \\
  \ell_{\bhm,\bha}(\fa) &= C_1\bhm_{\rm eff}\la\fa_0,\fa\ra,\quad \fa\in\so_4,
\end{align*}
where $C_0,C_1$ are nonzero real constants. We then have the following analogue of Lemma~\ref{LemmaSO2}:

\begin{lemma}
\label{LemmaKO2}
  Given admissible triples $b_1,\ldots,b_N$ as in Theorem~\usref{ThmKGlue}, $b_i=(p_i,\bhm_i,\fa_i)$, with the $p_i$ pairwise distinct, suppose $\chi_i\in\CI(\pa M)$ are cutoff functions, which are identically $1$ near $p_i$. Set $\Err_{N T}:=\sum_{i=1}^N(\Err_{b_i})_{N T}\in\CI(\Sph^3;T^*\Sph^3)$, where
  \[
    (\Err_{b_i})_{N T}(W) := \tau^{-4}P_0(\chi_i g_{p_i,\bhm_i,\fa_i} + (1-\chi_i)g_\dS\bigr)(\tau\pa_\tau,\tau W)|_{\tau=0},\quad W\in T\Sph^3.
  \]
  Then we have
  \[
    \int_{\Sph^3} V(\Err_{N T}) |d g_{\Sph^3}| = 0\quad\forall\,V\in\mathfrak{conf}_3
  \]
  if and only if $\{b_1,\ldots,b_N\}$ is balanced as in Definition~\usref{DefKBalance}.
\end{lemma}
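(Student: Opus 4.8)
The proof of Lemma~\ref{LemmaKO2} will closely parallel that of Lemma~\ref{LemmaSO2}, and the plan is to reduce the general statement to the single--black--hole computation carried out above, namely the formulas
\[
  \ell_{\bhm,\bha}(S_q) = C_0\bhm_{\rm eff}\la p_0,q\ra,\qquad
  \ell_{\bhm,\bha}(\fa) = C_1\bhm_{\rm eff}\la\fa_0,\fa\ra,
\]
together with the classification of conformal Killing vector fields on $\Sph^3$ from Proposition~\ref{PropSConfKillSph} (extended by the rotations $\mathfrak{iso}_3\cong\so_4$). First I would fix radial cutoffs $\chi_i$ relative to $p_i$ and use the rotation $R_i\in SO(4)$ which was used in~\S\ref{SsKP} to define $g_{p_i,\bhm_i,\fa_i}$ as a pullback of $g_{\bhm_i,\bha_i}$; since the solvability integral $\int_{\Sph^3}V(\Err_{N T})|d g_{\Sph^3}|$ is the sum $\sum_i \ell_{b_i}(V)$ of the individual contributions, and since each $\ell_{b_i}$ is $\so_4$-equivariant under the change of frame induced by $R_i$, the single--black--hole formulas transplanted to $p_i$ read
\[
  \ell_{b_i}(S_q) = C_0\,\bhm_{{\rm eff},i}\la p_i,q\ra,\qquad
  \ell_{b_i}(\fa) = C_1\,\bhm_{{\rm eff},i}\la\fa_i,\fa\ra,
\]
where $\bhm_{{\rm eff},i}=\bhm_{\rm eff}(b_i)$. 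Here I would be slightly careful to check that the decomposition of $\ell$ into its $\mathfrak{scal}_3$-part and $\mathfrak{iso}_3$-part, and the vanishing on the orthogonal complement of $S_{p_i}$ inside $\mathfrak{scal}_3$ and of $\fa_i$ inside $\mathfrak{iso}_3$, are indeed $SO(4)$-equivariant; this follows because rotations act on $\mathfrak{conf}_3=\mathfrak{scal}_3\oplus\mathfrak{iso}_3$ preserving each summand, and because $S_q$ transforms as $q$ while $\fa$ transforms by conjugation, compatibly with the inner products $\la\cdot,\cdot\ra$ on $\R^4$ and $\so_4$.

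Granting this, for a Killing vector field $V=\fa\in\mathfrak{iso}_3\cong\so_4$ we get $\sum_i\ell_{b_i}(\fa)=C_1\la\sum_i\bhm_{{\rm eff},i}\fa_i,\fa\ra$, which vanishes for all $\fa$ precisely when $\sum_i\bhm_{{\rm eff},i}\fa_i=0$; and for $V=S_q\in\mathfrak{scal}_3$, $q\in\R^4$, we get $\sum_i\ell_{b_i}(S_q)=C_0\la\sum_i\bhm_{{\rm eff},i}p_i,q\ra$, which vanishes for all $q$ precisely when $\sum_i\bhm_{{\rm eff},i}p_i=0$. Since $C_0,C_1\neq 0$ and $\mathfrak{conf}_3=\mathfrak{scal}_3\oplus\mathfrak{iso}_3$, the vanishing of the integral for all $V\in\mathfrak{conf}_3$ is equivalent to the conjunction of the two balance relations in Definition~\ref{DefKBalance}, which is the claim for radial cutoffs.

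It remains to remove the assumption that the $\chi_i$ are radial. Exactly as in the last paragraph of the proof of Lemma~\ref{LemmaSO2}, the difference between the errors produced by two cutoffs $\chi_{i,1},\chi_{i,2}$ localizing near the same $p_i$ lies in the range of $\delta_h$ acting on smooth $1$-forms supported away from $p_i$: following the computation~\eqref{EqSO1Calc}, the normal-tangential component of the difference equals $-(\Lambda/3)3\,\delta_h\bigl((\chi_{i,1}-\chi_{i,2})(\gamma_3^{(i)})_{T T 0}\bigr)$, where $(\gamma_3^{(i)})_{T T 0}$ is the trace-free tangential-tangential part of $g_{p_i,\bhm_i,\fa_i}-g_\dS$ in the splitting~\eqref{Eq0ESplitTr}, and $\chi_{i,1}-\chi_{i,2}$ vanishes near $p_i$; the remaining components of the difference lie, as in Lemma~\ref{LemmaKO1}, in the range of $I(L_0,4)$ and contribute nothing to the solvability pairing with conformal Killing fields after integration by parts. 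Hence the integral $\int_{\Sph^3}V(\Err_{N T})|d g_{\Sph^3}|$ is independent of the choice of cutoffs, and the general case follows from the radial case. The only genuinely delicate point—and the one I would be most careful about—is the equivariance bookkeeping in the first step: making sure that the pullback by $R_i$ intertwines the frame $(d\tau_s/\tau_s,\tau_s,h_s)$ with the corresponding frame at $p_i$, so that the constants $C_0,C_1$ are the \emph{same} for every black hole and the splitting of $\ell_{b_i}$ along $\mathfrak{scal}_3\oplus\mathfrak{iso}_3$ is the one induced by the standard $SO(4)$-action; once this is in place the rest is linear algebra on $\R^4\oplus\so_4$.
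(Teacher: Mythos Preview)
Your proposal is correct and follows essentially the same approach as the paper, which leaves the proof of Lemma~\ref{LemmaKO2} implicit by presenting it as ``the analogue of Lemma~\ref{LemmaSO2}'' after establishing the single-black-hole formulas $\ell_{\bhm,\bha}(S_q)=C_0\bhm_{\rm eff}\la p_0,q\ra$ and $\ell_{\bhm,\bha}(\fa)=C_1\bhm_{\rm eff}\la\fa_0,\fa\ra$. Your explicit treatment of the $SO(4)$-equivariance (transporting the computation from $p_0$ to each $p_i$ via the defining rotations $R_i$) and of cutoff-independence fills in exactly the details the paper leaves to the reader; the only superfluous remark is the one about the non-$NT$ components in the cutoff-independence step, since the lemma concerns only $\Err_{N T}$.
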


The remainder of the gluing construction is very similar to the SdS gluing:

\begin{proof}[Proof of Theorem~\usref{ThmKGlue}]
  The only minor difference compared to the proof of Theorem~\ref{ThmSGlue} is the analogue of Proposition~\ref{PropSO}. Under the balance condition we can solve away the normal-tangential component of the error term using Delay's result. However, the $\tau^4$ leading order part of the error in general now has other nonvanishing components as well; but as demonstrated in Lemma~\ref{LemmaKO1}, these error terms lie in the range of $I(L_0,4)$ and can thus be solved away pointwise on $\pa M$ using a $\tau^4\CI(\pa M;S^2\,{}^0 T^*M)$ metric correction, with support of this correction contained in $\bigcup_i\supp d\chi_i$.
  
  The rest of the proof is the same: one constructs a formal solution in a generalized harmonic gauge as in Proposition~\ref{PropSGFormal}, solves away the remaining `trivial' error as in Proposition~\ref{PropSCNonlinear}, and thus obtains a solution of the Einstein vacuum equation by appealing to Lemma~\ref{LemmaSCUniq}.
\end{proof}

\bibliographystyle{alpha}

\begin{thebibliography}{DHR13}

\bibitem[AC05]{AndersonChruscielSimple}
Michael~T. Anderson and Piotr~T. Chru{\'s}ciel.
\newblock {A}symptotically simple solutions of the vacuum {E}instein equations
  in even dimensions.
\newblock {\em Communications in Mathematical Physics}, 260(3):557--577, 2005.

\bibitem[And05]{AndersonStabilityEvenDS}
Michael~T. Anderson.
\newblock Existence and stability of even-dimensional asymptotically de
  {S}itter spaces.
\newblock {\em Annales Henri Poincar\'e}, 6(5):801--820, 2005.

\bibitem[BL63]{BrillLindquist}
Dieter~R. Brill and Richard~W. Lindquist.
\newblock Interaction energy in geometrostatics.
\newblock {\em Physical Review}, 131(1):471, 1963.

\bibitem[Car68]{CarterHamiltonJacobiEinstein}
Brandon Carter.
\newblock {H}amilton--{J}acobi and {S}chr{\"o}dinger separable solutions of
  {E}instein's equations.
\newblock {\em Communications in Mathematical Physics}, 10(4):280--310, 1968.

\bibitem[CB52]{ChoquetBruhatLocalEinstein}
Yvonne Choquet-Bruhat.
\newblock Th{\'e}or{\`e}me d'existence pour certains syst{\`e}mes
  d'{\'e}quations aux d{\'e}riv{\'e}es partielles non lin{\'e}aires.
\newblock {\em Acta mathematica}, 88(1):141--225, 1952.

\bibitem[CBG69]{ChoquetBruhatGerochMGHD}
Yvonne Choquet-Bruhat and Robert Geroch.
\newblock {G}lobal aspects of the {C}auchy problem in general relativity.
\newblock {\em Communications in Mathematical Physics}, 14(4):329--335, 1969.

\bibitem[CD02]{ChruscielDelaySimple}
Piotr~T. Chru{\'s}ciel and Erwann Delay.
\newblock Existence of non-trivial, vacuum, asymptotically simple spacetimes.
\newblock {\em Classical and Quantum Gravity}, 19(9):L71, 2002.

\bibitem[CD03]{ChruscielDelayMapping}
Piotr~T. Chru{\'s}ciel and Erwann Delay.
\newblock On mapping properties of the general relativistic constraints
  operator in weighted function spaces, with applications.
\newblock {\em M\'em. Soc. Math. Fr. (N.S.)}, (94):vi+103, 2003.

\bibitem[CIP04]{ChruscielIsenbergPollackPRL}
Piotr~T. Chru{\'s}ciel, James Isenberg, and Daniel Pollack.
\newblock Gluing initial data sets for general relativity.
\newblock {\em Physical review letters}, 93(8):081101, 2004.

\bibitem[CIP05]{ChruscielIsenbergPollackEngineering}
Piotr~T. Chru{\'s}ciel, James Isenberg, and Daniel Pollack.
\newblock Initial data engineering.
\newblock {\em Communications in mathematical physics}, 257(1):29--42, 2005.

\bibitem[CM03]{ChruscielMazzeoManyBH}
Piotr~T. Chru{\'s}ciel and Rafe Mazzeo.
\newblock On 'many-black-hole' vacuum spacetimes.
\newblock {\em Classical and Quantum Gravity}, 20(4):729, 2003.

\bibitem[Cor00]{CorvinoScalar}
Justin Corvino.
\newblock Scalar curvature deformation and a gluing construction for the
  {E}instein constraint equations.
\newblock {\em Comm. Math. Phys.}, 214(1):137--189, 2000.

\bibitem[Cor13]{CortierKdSGluing}
Julien Cortier.
\newblock Gluing construction of initial data with {K}err--de {S}itter ends.
\newblock {\em Ann. Henri Poincar\'e}, 14(5):1109--1134, 2013.

\bibitem[CP08]{ChruscielPollackKottler}
Piotr~T. Chru{\'s}ciel and Daniel Pollack.
\newblock Singular {Y}amabe metrics and initial data with exactly
  {K}ottler--{S}chwarzschild--de {S}itter ends.
\newblock {\em Ann. Henri Poincar\'e}, 9(4):639--654, 2008.

\bibitem[CS06]{CorvinoSchoenAsymptotics}
Justin Corvino and Richard~M. Schoen.
\newblock On the asymptotics for the vacuum {E}instein constraint equations.
\newblock {\em J. Differential Geom.}, 73(2):185--217, 2006.

\bibitem[CS16]{CarlottoSchoenData}
Alessandro Carlotto and Richard Schoen.
\newblock {L}ocalizing solutions of the {E}instein constraint equations.
\newblock {\em Inventiones mathematicae}, 205(3):559--615, 2016.

\bibitem[Del12]{DelayCompact}
Erwann Delay.
\newblock {S}mooth compactly supported solutions of some underdetermined
  elliptic {PDE}, with gluing applications.
\newblock {\em Communications in Partial Differential Equations},
  37(10):1689--1716, 2012.

\bibitem[DeT82]{DeTurckPrescribedRicci}
Dennis~M. DeTurck.
\newblock Existence of metrics with prescribed {R}icci curvature: local theory.
\newblock {\em Invent. Math.}, 65(1):179--207, 1981/82.

\bibitem[DHR13]{DafermosHolzegelRodnianskiKerrBw}
Mihalis Dafermos, Gustav Holzegel, and Igor Rodnianski.
\newblock A scattering theory construction of dynamical vacuum black holes.
\newblock {\em Preprint, arXiv:1306.5364}, 2013.

\bibitem[FG85]{FeffermanGrahamAmbient}
Charles Fefferman and C.~Robin Graham.
\newblock Conformal invariants.
\newblock In {\em {\'E}lie Cartan et les math{\'e}matiques d'aujourd'hui -
  Lyon, 25-29 juin 1984}, number S131 in Ast{\'e}risque, pages 95--116.
  Soci{\'e}t{\'e} math{\'e}matique de France, 1985.

\bibitem[FG12]{FeffermanGrahamAmbientBook}
Charles Fefferman and C.~Robin Graham.
\newblock {\em The ambient metric}, volume 178 of {\em Annals of Mathematics
  Studies}.
\newblock Princeton University Press, Princeton, NJ, 2012.

\bibitem[Fri86a]{FriedrichDeSitterPastSimple}
Helmut Friedrich.
\newblock Existence and structure of past asymptotically simple solutions of
  {E}instein's field equations with positive cosmological constant.
\newblock {\em Journal of Geometry and Physics}, 3(1):101 -- 117, 1986.

\bibitem[Fri86b]{FriedrichStability}
Helmut Friedrich.
\newblock On the existence of {$n$}-geodesically complete or future complete
  solutions of {E}instein's field equations with smooth asymptotic structure.
\newblock {\em Comm. Math. Phys.}, 107(4):587--609, 1986.

\bibitem[Fri91]{FriedrichEinsteinMaxwellYangMills}
Helmut Friedrich.
\newblock {O}n the global existence and the asymptotic behavior of solutions to
  the {E}instein--{M}axwell--{Y}ang-{M}ills equations.
\newblock {\em Journal of Differential Geometry}, 34(2):275--345, 1991.

\bibitem[GL91]{GrahamLeeConformalEinstein}
C.~Robin Graham and John~M. Lee.
\newblock Einstein metrics with prescribed conformal infinity on the ball.
\newblock {\em Adv. Math.}, 87(2):186--225, 1991.

\bibitem[GZ03]{GrahamZworskiScattering}
C.~Robin Graham and Maciej Zworski.
\newblock Scattering matrix in conformal geometry.
\newblock {\em Inventiones mathematicae}, 152(1):89--118, 2003.

\bibitem[HV18]{HintzVasyKdSStability}
Peter Hintz and Andr{\'a}s Vasy.
\newblock {T}he global non-linear stability of the {K}err--de {S}itter family
  of black holes.
\newblock {\em Acta mathematica}, 220:1--206, 2018.

\bibitem[HZ18]{HintzZworskiHypObs}
Peter Hintz and Maciej Zworski.
\newblock Resonances for obstacles in hyperbolic space.
\newblock {\em Comm. Math. Phys.}, 359(2):699--731, 2018.

\bibitem[IMP02]{IsenbergMazzeoPollackWormholes}
James Isenberg, Rafe Mazzeo, and Daniel Pollack.
\newblock {G}luing and wormholes for the {E}instein constraint equations.
\newblock {\em Communications in Mathematical Physics}, 231(3):529--568, 2002.

\bibitem[IMP03]{IsenbergMazzeoPollackTopology}
James Isenberg, Rafe Mazzeo, and Daniel Pollack.
\newblock On the topology of vacuum spacetimes.
\newblock In {\em Annales Henri Poincar{\'e}}, volume~4, pages 369--383.
  Springer, 2003.

\bibitem[IMP05]{IsenbergMaxwellPollackGluing}
James Isenberg, David Maxwell, and Daniel Pollack.
\newblock A gluing construction for non-vacuum solutions of the
  einstein-constraint equations.
\newblock {\em Advances in Theoretical and Mathematical Physics},
  9(1):129--172, 2005.

\bibitem[KT93]{KastorTraschenManyBH}
David Kastor and Jennie Traschen.
\newblock {C}osmological multi-black-hole solutions.
\newblock {\em Physical Review D}, 47(12):5370, 1993.

\bibitem[Lin63]{LindquistIVP}
Richard~W. Lindquist.
\newblock {I}nitial-value problem on {E}instein--{R}osen manifolds.
\newblock {\em Journal of Mathematical Physics}, 4(7):938--950, 1963.

\bibitem[Luk12]{LukCharacteristic}
Jonathan Luk.
\newblock On the local existence for the characteristic initial value problem
  in general relativity.
\newblock {\em Int. Math. Res. Not.}, (20):4625--4678, 2012.

\bibitem[Maj47]{MajumdarSolution}
Sudhansu~D. Majumdar.
\newblock A class of exact solutions of {E}instein's field equations.
\newblock {\em Physical Review}, 72(5):390, 1947.

\bibitem[Maz91]{MazzeoEdge}
Rafe Mazzeo.
\newblock {E}lliptic theory of differential edge operators {I}.
\newblock {\em Communications in Partial Differential Equations},
  16(10):1615--1664, 1991.

\bibitem[Mel93]{MelroseAPS}
Richard~B. Melrose.
\newblock {\em The {A}tiyah--{P}atodi--{S}inger index theorem}, volume~4 of
  {\em Research Notes in Mathematics}.
\newblock A K Peters, Ltd., Wellesley, MA, 1993.

\bibitem[Mel96]{MelroseDiffOnMwc}
Richard~B. Melrose.
\newblock Differential analysis on manifolds with corners.
\newblock {\em Book, in preparation, available online}, 1996.

\bibitem[Mis63]{MisnerGeometrostatics}
Charles~W. Misner.
\newblock The method of images in geometrostatics.
\newblock {\em Annals of Physics}, 24:102--117, 1963.

\bibitem[MM87]{MazzeoMelroseHyp}
Rafe~R. Mazzeo and Richard~B. Melrose.
\newblock Meromorphic extension of the resolvent on complete spaces with
  asymptotically constant negative curvature.
\newblock {\em Journal of Functional Analysis}, 75(2):260--310, 1987.

\bibitem[P{\etalchar{+}}99]{PerlmutterEtAlLambda}
Saul Perlmutter et~al.
\newblock {M}easurements of {$\Omega$} and {$\Lambda$} from 42
  {H}igh-{R}edshift {S}upernovae.
\newblock {\em The Astrophysical Journal}, 517(2):565, 1999.

\bibitem[Pap45]{PapapetrouSolution}
A.~Papapetrou.
\newblock {A} {S}tatic {S}olution of the {E}quations of the {G}ravitational
  {F}ield for an {A}rbitary {C}harge-{D}istribution.
\newblock {\em Proceedings of the Royal Irish Academy. Section A: Mathematical
  and Physical Sciences}, 51:191--204, 1945.

\bibitem[Pen65]{PenroseAsymptotics}
Roger Penrose.
\newblock Zero rest-mass fields including gravitation: asymptotic behaviour.
\newblock In {\em Proceedings of the Royal Society of London A: Mathematical,
  Physical and Engineering Sciences}, volume 284, pages 159--203. The Royal
  Society, 1965.

\bibitem[R{\etalchar{+}}98]{RiessEtAlLambda}
Adam~G. Riess et~al.
\newblock {O}bservational {E}vidence from {S}upernovae for an {A}ccelerating
  {U}niverse and a {C}osmological {C}onstant.
\newblock {\em The Astronomical Journal}, 116(3):1009, 1998.

\bibitem[Ren90]{RendallCharacteristic}
Alan~D. Rendall.
\newblock {R}eduction of the {C}haracteristic {I}nitial {V}alue {P}roblem to
  the {C}auchy {P}roblem and {I}ts {A}pplications to the {E}instein
  {E}quations.
\newblock {\em Proceedings of the Royal Society of London. Series A,
  Mathematical and Physical Sciences}, 427(1872):221--239, 1990.

\bibitem[RSR18]{RodnianskiShlapentokhRothmanSelfSimilar}
Igor Rodnianski and Yakov Shlapentokh-Rothman.
\newblock {T}he asymptotically self-similar regime for the {E}instein vacuum
  equations.
\newblock {\em Geometric and Functional Analysis}, 28(3):755--878, 2018.

\bibitem[Sch08]{SchottenloherCFT}
Martin Schottenloher.
\newblock {\em A mathematical introduction to conformal field theory}, volume
  759 of {\em Lecture Notes in Physics}.
\newblock Springer-Verlag, Berlin, second edition, 2008.

\bibitem[Sch15]{SchlueCosmological}
Volker Schlue.
\newblock {G}lobal results for linear waves on expanding {K}err and
  {S}chwarzschild de {S}itter cosmologies.
\newblock {\em Communications in Mathematical Physics}, 334(2):977--1023, 2015.

\bibitem[Sch16]{SchlueWeylDecay}
Volker Schlue.
\newblock {D}ecay of the {W}eyl curvature in expanding black hole cosmologies.
\newblock {\em Preprint, arXiv:1610.04172}, 2016.

\bibitem[Sch19]{SchlueOpticaldS}
Volker Schlue.
\newblock {O}ptical functions in de {S}itter.
\newblock {\em Preprint, arXiv:1910.05799}, 2019.

\bibitem[Tay11]{TaylorPDE3}
Michael~E. Taylor.
\newblock {\em Partial differential equations {III}. {N}onlinear equations},
  volume 117 of {\em Applied Mathematical Sciences}.
\newblock Springer, New York, second edition, 2011.

\bibitem[Vas10]{VasyWaveOndS}
Andr{\'a}s Vasy.
\newblock The wave equation on asymptotically de {S}itter-like spaces.
\newblock {\em Advances in Mathematics}, 223(1):49--97, 2010.

\bibitem[Zwo16]{ZworskiRevisitVasy}
Maciej Zworski.
\newblock Resonances for asymptotically hyperbolic manifolds: {V}asy's method
  revisited.
\newblock {\em J. Spectr. Theory}, 2016(6):1087--1114, 2016.

\end{thebibliography}
\newcommand{\etalchar}[1]{$^{#1}$}

\end{document}